\documentclass[letterpaper,reqno,11pt,oneside]{amsart}

\usepackage{graphicx}
\usepackage[totalheight=9.6in,totalwidth=6.15in,marginparwidth=1in,centering]{geometry}
\usepackage{colonequals}
\usepackage{amssymb,amsmath,latexsym,amsthm,amsfonts,dsfont,mathrsfs}
\usepackage[normalem]{ulem}
\usepackage{caption}
\usepackage{enumitem}
\usepackage{subcaption}
\usepackage{mathtools}
\usepackage[dvipsnames]{xcolor}
\usepackage{centernot}
\usepackage{xifthen}
\usepackage{xstring}
\usepackage{xparse}
\usepackage{etex}
\usepackage[symbol]{footmisc}
\usepackage{tikz}
\usepackage[extension=pdf,bookmarksopen]{hyperref}
\RequirePackage{cleveref}
\usepackage[style=alphabetic,sorting=nyt,natbib=true,maxnames=99,isbn=false,doi=false,url=false,firstinits=true,hyperref=auto,arxiv=abs,backend=bibtex]{biblatex}
\addbibresource{bibliography.bib}
\AtEveryBibitem{%
  \clearlist{language}}
\DeclareFieldFormat[article,inbook,incollection,inproceedings,patent,thesis,unpublished]{title}{#1\isdot}
\renewbibmacro{in:}{%
  \ifentrytype{article}{}{%
    \printtext{\bibstring{in}\intitlepunct}}}
\setlength{\biblabelsep}{6pt}
\defbibheading{apa}[\refname]{\section*{#1}}
\DeclareFieldFormat{sentencecase}{\MakeSentenceCase{#1}} \renewbibmacro*{title}{%
  \ifthenelse{\iffieldundef{title}\AND\iffieldundef{subtitle}}{}
  {\ifthenelse{\ifentrytype{article}\OR\ifentrytype{inbook}%
      \OR\ifentrytype{incollection}\OR\ifentrytype{inproceedings}%
      \OR\ifentrytype{inreference}} {\printtext[title]{%
        \printfield[sentencecase]{title}%
        \setunit{\subtitlepunct}%
        \printfield[sentencecase]{subtitle}}}%
    {\printtext[title]{%
        \printfield[titlecase]{title}%
        \setunit{\subtitlepunct}%
        \printfield[titlecase]{subtitle}}}%
    \newunit}%
  \printfield{titleaddon}}

\definecolor{darkblue}{rgb}{0.13,0.13,0.39}%
\hypersetup{colorlinks=true,urlcolor=darkblue,citecolor=darkblue,linkcolor=darkblue,pdftitle={Coexistence for a population model with forest fires},pdfauthor={L. Fredes, A. Linker, D. Remenik}}

\newcommand{\llr}{<\hspace{-4pt}<}

\DeclareDocumentCommand\brho{ m g }{\tilde{\rho}_{#1}^N\IfNoValueF {#2} {(#2)}}
\DeclareDocumentCommand\beeta{ m g }{\tilde{\eta}_{#1}^{N\IfNoValueF {#2} {,(#2)}}}
\DeclareDocumentCommand\ff{ g }{f_{\beta}\IfNoValueF {#1}{^{(#1)}}}
\DeclareDocumentCommand\pb{ g g }{\mathsf{u}\IfNoValueF{#1}{_{#1}}\IfNoValueF{#2}{^{#2}}}
\DeclareDocumentCommand\gg{ g }{g \IfNoValueF {#1}{_{\alp{#1}}}}
\DeclareDocumentCommand\GG{ g }{G \IfNoValueF {#1}{_{\alp{#1}}}}
\DeclareDocumentCommand\ll{ g }{l \IfNoValueF {#1}{_{#1}}}
\newcommand{\sfrac}[2] {\mbox{$\frac{#1}{#2}$}}

\newcommand{\We}{\overline{W}}
\DeclareDocumentCommand\ss{ g }{\Sigma \IfNoValueF {#1}{_\alp{#1}}}

\DeclareDocumentCommand\rrho{ m g }{\rho_{#1}^{N}\IfNoValueF {#2} {\tsm(#2)}}
\DeclareDocumentCommand\eeta{ m g }{\eta_{#1}^{N\IfNoValueF {#2} {,(#2)}}}
\DeclareDocumentCommand\bet{ g }{\IfNoValueTF {#1} {\vec{\beta}}{\beta(#1)}}
\DeclareDocumentCommand\alp{ g g }{\alpha\IfNoValueF {#2} {_{#2}}\IfNoValueF{#1}{\StrLen{#1}[\x] \ifnum\x>0 (#1) \fi}}
\DeclareDocumentCommand\pphi{ g }{\phi\IfNoValueF {#1}{_{#1}}}
\DeclareDocumentCommand\ff{ g }{f_{\beta}\IfNoValueF {#1}{^{(#1)}}}
\DeclareDocumentCommand\pp{ g g }{p\IfNoValueF{#1}{_{#1}}\IfNoValueF{#2}{^{#2}}}
\DeclareDocumentCommand\hh{ g g }{h \IfNoValueF {#1}{^{#1}}\IfNoValueF{#2} {_{#2}}}
\DeclareDocumentCommand\bh{ g g }{\bar{h} \IfNoValueF {#1}{^{#1}}\IfNoValueF{#2} {_{#2}}}
\newcommand{\uno}[1]{\mathds{1}_{\{#1\}}}
\newcommand{\Pe}{\mathbf{P}}
\newcommand{\Ee}{\mathbf{E}}
\DeclareDocumentCommand\tt{ g }{\tau_N\IfNoValueF {#1}{^{#1}}}
\DeclareDocumentCommand\dd{ m g }{d_{#1}^{N\IfNoValueF {#2} {,(#2)}}}
\DeclareDocumentCommand\eetat{ m g }{\tilde{\eta}_{#1}^{N\IfNoValueF {#2} {,(#2)}}}
\DeclareDocumentCommand\eetac{ m g }{\hat{\eta}_{#1}^{N\IfNoValueF {#2} {,(#2)}}}
\newcommand{\ke}{\kappa_\varepsilon}
\DeclareDocumentCommand\qq{ g g }{q\IfNoValueF{#1}{_{#1}}\IfNoValueF{#2}{^{#2}}}
\DeclareDocumentCommand\pbhi{ g }{\varphi\IfNoValueF {#1}{^{#1}}}

\newcommand{\betac}{2\log 2}
\newcommand{\etac}{\sigma}
\newcommand{\MM}{\textup{MM} }
\newcommand{\MMM}{\textup{MMM} }
\newcommand{\E}{\mathbb{E}}
\newcommand{\Cr}{\mathcal{C}_r}

\newcommand{\N}{\mathbb{N}}
\newcommand{\R}{\mathbb{R}}
\newcommand{\Z}{\mathbb{Z}}
\newcommand{\ualpha}{\underline{\alpha}}
\renewcommand{\P}{\mathbb{P}}

\newcommand{\GN}{{G_N}}

\newcommand{\DS}{{\sf DS}}

\newcommand{\TT}{{\mathcal{T}}}

\newcommand{\ts}{\hspace{0.1em}}
\newcommand{\tts}{\hspace{0.05em}}
\newcommand{\tsm}{\hspace{-0.1em}}

\newtheorem{theo}{Theorem}[section]
\newtheorem{cor}[theo]{Corollary}
\newtheorem{lemma}[theo]{Lemma}
\newtheorem{prop}[theo]{Proposition}

\theoremstyle{definition}
\newtheorem{definition}[theo]{Definition}

\newtheorem{remark}[theo]{Remark}

\numberwithin{equation}{section}

\addtolength{\parskip}{3pt}

\begin{document}

\title[Coexistence for a population model with forest fires]{Coexistence for a population model with forest fire epidemics}

\author{Luis Fredes}
\address[L.~Fredes]{LaBRI, University of Bordeaux}
\email{luis-maximiliano.fredes-carrasco@u-bordeaux.fr}

\author{Amitai Linker}
\address[A.~Linker]{Departamento de Matem\'aticas, Facultad de Ciencias Exactas, Universidad Andr\'es Bello, Santiago, Chile} 
\email{amitai.linker@unab.cl}

\author{Daniel Remenik} \address[D.~Remenik]{
  Departamento de Ingenier\'ia Matem\'atica and Centro de Modelamiento Matem\'atico (IRL-CNRS 2807)\\
  Universidad de Chile\\
  Av. Beauchef 851, Torre Norte, Piso 5\\
  Santiago\\
  Chile} \email{dremenik@dim.uchile.cl}

\date{March 14, 2022.}

\begin{abstract}
We investigate the effect on survival and coexistence of introducing forest fire epidemics to a certain two-species competition model. The model is an extension of the one introduced by \citet{DYR09}, who studied a discrete time particle system running on a random 3-regular graph where occupied sites grow until they become sufficiently dense so that an epidemic wipes out large clusters. In our extension we let two species affected by independent epidemics compete for space, and we allow the epidemic to attack not only giant clusters, but also clusters of smaller order. Our main results show that, for the two-type model, there are explicit parameter regions where either one species dominates or there is coexistence; this contrasts with the behavior of the model without epidemics, where the fitter species always dominates. We also discuss the survival and extinction regimes for the model with a single species. In both cases we prove convergence to explicit dynamical systems; simulations suggest that their orbits present chaotic behavior.
\end{abstract}

\maketitle

\section{Introduction and main results}\label{introduccion}

In the mathematical biology literature, resource competition between $n$ species is widely modeled through Lotka-Volterra type ODEs of the form
\[\textstyle\frac{dx_i(t)}{dt}\;=\;x_i(t)\tsm\left(a_i-\sum_{j=1}^nb_{ij}x_{j}(t)\right),\qquad i=1,\dotsc,n,\]
or suitable difference equation versions of them if time is taken to be discrete, where $x_i\in[0,1]$ represents the density of the $i$-th species and the $a_i$'s and $b_{ij}$'s are parameters. The term inside the parentheses determines the effect of inter-specific and intra-specific competition, and has the advantage of being simple enough for an easy interpretation of its coefficients while, at the same time, allowing the system to exhibit a rich asymptotic behavior, including fixed points, limit cycles and attractors. However, despite its ubiquitousness, the classical model seems inadequate to explain diverse and complex ecosystems, as conditions for stability become more restrictive for larger values of $n$; the same seems to be true regarding conditions for coexistence (see e.g. \cite{Hofbauer1987, intraspecific}), implying that, unless the parameters have been finely tuned, most species will be driven to extinction as a result of competition.
	
Even though it has been argued that natural selection alone may be able to tune the relevant parameters to yield a coexistence regime \cite{naturalsel1}, a considerable amount of effort has been directed towards extending models such as Lotka-Volterra in ways that promote biodiversity, for example through the addition of predators \cite{MIMURA,Hofbauer,schreiber}, of random fluctuations in the environment \cite{ZHU,Mao} and of diseases \cite{Holt,saenz}. Another way of extending the model is based on questioning the linear form of the inter-specific and intra-specific competition terms; indeed, for large population densities the intra-specific competition of a species has an increasingly important nonlinear component, known as the {\sl crowding effect}, which is overlooked in the original equations. The crowding effect is capable of effectively outbalancing the inter-specific competition effect for a significantly larger set of parameters, permiting coexistence even when $n$ is large \cite{Hartley,sevenster,Gavina}.

One important source for the crowding effect is the fact that at high population densities the connectedness between individuals tends to be high,  making it easier for an infectious disease to spread through the population and giving rise to epidemic outbreaks.
To the best of our knowledge, the effect that this phenomenon may have on coexistence has not been explored in the setting of competing spatial population models. This provides the main motivation for our paper.

\subsection{The multi-type moth model on a random 3-regular graph}

The model which we will study is a multi-type version of a particle system introduced by \citet{DYR09}.
Their model is inspired by the gypsy moth, whose populations grow until they become sufficiently dense for the nuclear polyhedrosis virus, which strikes at larval stage and spreads between nearby hosts, to reduce them to a low level; we will refer to it as the {\sl moth model (MM)}.
The MM is a discrete time particle system which alternates between a growth stage akin to a discrete time contact process and a forest fire stage where an epidemic randomly destroys entire clusters of occupied sites. Forest fire models, which were first introduced in \cite{DrosselSchwabl}, have received much interest as a prime example of a system showing self-organized criticality, see e.g. \cite{rath}, but this is not the focus of our paper.
\cite{DYR09} was devoted mostly to the study of the evolution of the density of occupied sites in the limit as the size of the system goes to infinity; its main result showed that the system converges to a discrete time dynamical system which, as a result of the forest fire epidemic mechanism, presents chaotic behavior.

The extension of the MM which we will be interested in, and which we call the {\sl multi-type moth model (MMM)}, is defined as follows.
Let $(\GN)_{N\geq1}$ be a random connected 3-regular graph of size $N$, i.e. a random graph chosen uniformly among all connected graphs with $N$ vertices, all of which have degree 3 (we condition on the graph being connected for simplicity, it is known that a random 3-regular graph is connected with probability tending to 1 as $N\to\infty$ \cite{janLuRu}).
Fix also $m\in\N$, which will be the number of species (we will be interested mainly in $m=1$ and $m=2$).
For each $N\in\N$ the {MMM} is a discrete time Markov chain $\big(\eeta{k}\big)_{k\geq0}$ taking values in $\{0,\dots,m\}^\GN$; each site $x\in\GN$ can be occupied by an individual of type $i \in \{ 1,...,m \}$ ($\eeta{k}(x)=i$) or be vacant ($\eeta{k}(x)=0$).
The process depends on two sets of parameters, $\beta=(\bet{1},\dotsc,\bet{m})\in \R^m_+$ and $\alpha_N=(\alp{1}{N},\dotsc,\alp{m}{N})\in[0,1]^{m}$.
The dynamics of the process at each time step is divided into two consecutive stages, {\sl growth} and {\sl epidemic}:

\vskip3pt

\noindent\underline{Growth:} An individual of type $i$ at site $x \in \GN$ sends a Poisson$[\bet{i}]$ number of descendants to sites chosen uniformly at random in $\GN$.
If a site receives more than one individual, the type of the site is chosen uniformly among the individuals it receives.
We will use the notation $\eta_{k+\frac{1}{2}}$ to refer to the configuration after the $k$-th growth stage but before the subsequent epidemics.

\noindent\underline{Epidemic:} Each site $x$ occupied by an individual of type $i$ after the growth stage is attacked by an epidemic with probability $\alp{i}{N}$, independently across sites.
The individual at $x$ then dies along with its entire connected component of sites occupied by individuals of type $i$.
This happens independently for $i=1,\dotsc,m$.

\vskip3pt

The MMM can be defined naturally running on any sequence of (random or deterministic) graphs $\GN$.
In this paper we choose to work on random 3-regular graphs mostly because they look locally like a regular tree, which leads to explicit formulas for certain percolation probabilities which will appear in the epidemic stage.
Our results should hold for other choices of graphs which have this property, but for simplicity we will not pursue this here.
Likewise, it is possible to work with more general offspring distributions, as done in \cite{DYR09}, but we stick to Poisson in order to simplify the presentation and proofs.

Observe that the growth stage in our model is of mean-field type.
This is a simplifying assumption, but is not totally unrealistic: in terms of the one-year life cycle of the gypsy moth, one may think of the individuals as performing independent random walks in $\GN$ between each time step of the process (that is, during the moth stage coming from larvae surviving the epidemic), so that the population will have mixed by the time new individuals are born and then the growth stage will be, effectively, approximately mean-field.
One could generalize the model by sending particles born at $x\in\GN$ in the growth step to a site chosen uniformly from some given neighborhood $\mathcal{N}_N(x)$ of $x$.
We believe that most of our results remain true in the {\sl spread-out} case corresponding to $\mathcal{N}_N(x)=B(x,r_N)$ (the ball of radius $r_N$ around $x$ in the natural graph distance) with appropriate growth conditions on $r_N$, but it is not clear to us whether our arguments can be extended to that setting.

Note on the other hand that while the growth parameters $\bet{i}$ are fixed, we have allowed the epidemic parameters $\alp{i}{N}$ to depend on $N$.
For each species we are interested in two basic possibilities: either $\alp{i}{N}\to\alp{i}\in(0,1)$ for all $i$, or $\alp{i}{N}\longrightarrow0$ slower than logarithmically.
In the second case, which we will refer to as the {\sl weak epidemic regime}, a fixed site is hit by the epidemic with negligible probability, but it will typically be infected when it belongs to a macroscopic (giant) cluster of occupied sites, and in this case the infection will typically come from a site which is most at logarithmic distance (see Section \ref{sec:derivlim}).
In the first case, the {\sl strong epidemic regime}, and on top of infections coming from other sites in a connected cluster, each occupied site is hit by the epidemic with probability bounded away from $0$; as we will see, the behavior of the system as $N\to\infty$ is different in the two cases.
The condition on infections arriving typically from neighbors at most at a logarithmic distance, which comes from the decay condition we imposed on $\alp{i}{N}$, is technical; it will allow us to approximate neighborhoods in $\GN$ at relevant scales by a tree.
In principle one could consider weaker epidemic regimes, where $\alp{i}{N}\to0$ faster than logarithmically and infections typically come from far away neighbors, but this situation seems to go beyond the methods in our paper (in particular, it is not clear what the $N\to\infty$ limit of the evolution of the densities of occupied sites would be in this case).

In order to incorporate both regimes in the notation, we will assume throughout most of the paper that there are fixed parameters $\alp{1},\dotsc,\alp{m}\in[0,1)$ so that
\begin{equation}
\alp{i}{N}\longrightarrow\alp{i}\quad\text{and}\quad\alp{i}{N}\log(N)\longrightarrow\infty\quad\text{as }N\to\infty,\quad i=1,\dotsc,m\label{eq:alphaconv}
\end{equation}
(note that we exclude the trivial case $\alp{i}=1$; note also that the second condition is trivial if $\alp{i}>0$).
We remark that, while the MM studied in \cite{DYR09} corresponds to the $m=1$ case of our MMM, that paper worked only in the weak epidemic regime, so some of our results extend theirs even in the single-type case.
This extension, which is natural from the biological point of view as it incorporates into the model the effect of diseases with a fixed incidence rate, has a major impact on the system, see Sections \ref{sec:derivlim} and \ref{sec:phasediag}.

For later use we introduce the sequence $\big(\rrho{k}\big)_{k\geq0}$ of density vectors obtained from $\big(\eeta{k}\big)_{k\geq0}$, defined as
\begin{equation}\label{MMMdens}
\rrho{k}=(\rrho{k}{1},\dotsc,\rrho{k}{m})\qquad\text{with}\quad\rrho{k}{i}=\frac{1}{N}\sum_{x\in\GN}\uno{\eeta{k}(x)=i}.
\end{equation}

\subsection{Coexistence and domination for the two-type MMM}\label{sec:coexdom}

If one suppresses the epidemic stage then the MMM turns into a multi-type contact process, for which it is relatively easy to prove that the fittest species (i.e. the one with the largest growth parameter $\bet{i}$) will outcompete and drive to extinction all the other ones (this has been proved for the contact process in continuous time with other choices of $\GN$, see e.g. the result of \cite{N92}, and it would not be hard to extend to the current setting).
Our main result, which we state and prove in the case $m=2$, shows that the introduction of forest fire dynamics changes this picture: there are choices of parameters for which there is coexistence even when one species has a larger offspring parameter.
The intuition behind this is simple: if we introduce forest fire epidemics into the system then the fitter species, which achieves higher densities, will be more susceptible to the destruction of large occupied clusters, which will have the effect of periodically clearing space for the growth of the weaker species, giving it a chance to survive.

In order to state our result we need to explain first what we mean by coexistence.
Let
\[\tt{i}=\inf\!\big\{k\geq 1\!:\eeta{k}(x)\neq i~\forall x\in\GN \big\}=\inf\!\big\{k\geq 1\!:\rrho{k}{i} = 0 \big\}\]
denote the extinction time of type $i$, for $i=1,2$.
Note that the MMM is a Markov chain on a finite state space with the all-empty configuration as its unique absorbing state, which will be reached eventually starting from any initial condition, so it makes no sense to ask any of the species to survive for all times.
We follow instead the usual approach (see e.g. \cite{cox1989,durrett1988}) where one characterizes the different phases of the system in terms of the behavior of the extinction times as a function of the network size $N$.
Roughly, given a timescale $s_N$ such that $s_N/\log(N)\longrightarrow\infty$, we will say that:
\begin{itemize}[itemsep=3pt,leftmargin=15pt]
\item Species $i$ {\sl dominates} species $j$ if there is a $c>0$ so that $\tau^j_N\leq c\tts\log N$ and $\tau^i_N\geq s_N$ with probability tending to 1 as $N\to\infty$.
\item The two species {\sl coexist} if $\tau^1_N,\tau^2_N\geq s_N$ with probability tending to 1 as $N\to\infty$.
\end{itemize}

Define the {\sl fitness} of species $i$ as
\begin{equation}
\pphi{i}\;=\;(1-\alp{i})\bet{i},\label{eq:def-fitness}
\end{equation}
which corresponds to the effective birth rate of individuals after considering the probability that a newly born particle does not survive the epidemic stage due to an infection arising in its location.
We are only interested in the regime $\pphi{1},\pphi{2}>1$, since when $\pphi{i}\leq1$ species $i$ dies out even when ignoring the other species and epidemics coming from other sites.
For concreteness we will assume that type 2 is the fitter species.

\begin{theo}\label{theo:5intro}
  Consider the two-species MMM on a random 3-regular graph satisfying \eqref{eq:alphaconv} and $1<\pphi{1}<\pphi{2}$ and let $\ualpha_N=\min\{\alpha_N(1),\alpha_N(2)\}$.
  Then there are constants $c_1,c_2,c_1',c_2'>0$ such that the following holds: For any fixed $0<l_1<u_1<1$ and $0<l_2<u_2<1$ there is a $C>0$ such that
  \begin{equation}
  \P\big(\tt{2}\geq e^{c_1\ts\ualpha_N\tsm\log(N)}\big)\geq 1-Ce^{-c_2\ts\ualpha_N\tsm\log(N)},\label{esperanzadom2}
  \end{equation}
  for all $N$ and any $\rrho{0}{1}\in[l_1,u_1]$, $\rrho{0}{2}\in[l_2,u_2]$ (that is, the stronger species survives), while:
  \begin{enumerate}[label=\textup{(\roman*)}]
    \item \textup{(Coexistence)}\enspace If $\pphi{2}$ is sufficiently large then there is a $\underline{\phi}\in(1,\pphi{2})$ depending only on $\pphi{2}$ and $\alp{2}$ such that for all $\pphi{1}\in(\underline{\phi},\pphi{2})$,
    \begin{equation}\label{esperanzacoex}
    \P\big(\tt{1}\geq e^{c_1\ts\ualpha_N\tsm\log(N)}\big)\geq 1-Ce^{-c_2\ts\ualpha_N\tsm\log(N)}.
    \end{equation}
    \item \textup{(Domination)}\enspace For any $\pphi{2}$ there is a $\overline{\phi}\in(1,\pphi{2})$ and depending only on $\pphi{2}$ and $\alp{2}$ such that if $\pphi{1}\in(1,\overline{\phi}$),
	\begin{equation}
	\P(\tt{1}\leq c_1'\log N)\geq1-\log(N) e^{-c_2'\tts\ualpha_N\tsm\log(N)}\label{esperanzadom1}\end{equation}
  \end{enumerate}
\end{theo}

A couple of remarks are in order.

\begin{remark}\label{rem:dichotomy}
\leavevmode
\begin{enumerate}[label=(\roman*),itemsep=3pt,leftmargin=25pt]
\item In order for the result to provide a dichotomy between domination and survival, and fit the notions introduced above, one needs to have $\ualpha_N\log(N)/\log(\log(N))\longrightarrow\infty$ as $N\to\infty$. Note that this assumption also ensures that the right hand side of \eqref{esperanzadom1} goes to $1$.
\item Under the assumption $\ualpha_N\log(N)/\log(\log(N))\longrightarrow\infty$ one can prove that all the factors $\ualpha_N\log(N)$ appearing in the exponents in \eqref{esperanzadom2}--\eqref{esperanzadom1} can be replaced by $\ualpha_N\log(N)\vee\log(N)^{1/2}$, thus strengthening the dichotomy whenever $\log(\log(N))/\log(N)\llr\ualpha_N\llr\log(N)^{-1/2}$. See \Cref{rem:lalala} after the proof of \Cref{teo1paso}.
\item The timescale difference which we obtain is probably not optimal, but in any case it is quite strong: for example, if we take $\alp{i}{N}\longrightarrow\alp{i}\in(0,1)$ for each $i$ then the dichotomy for species 1 corresponds roughly to the difference between dying out in time $\log(N)$ and surviving for a time of order $N^c$ for some $c>0$.
\end{enumerate}
\end{remark}

\begin{remark}\label{rem:cdref}
Theorem \ref{theo:5intro} is a slightly simplified and condensed version of the results we will prove in later sections, which together provide finer information about the phase diagram of the process and of the dynamical system which describes it in the $N\to\infty$ limit, see Theorems \ref{theo:5} and \ref{extcoex}.
Those results imply in particular (see the discussion following the statement of \Cref{theo:5}) that, under the assumptions of \Cref{theo:5intro}:
\begin{enumerate}[label=(\roman*),itemsep=3pt,leftmargin=25pt]
\item There exist $\pphi{1}<\pphi{1}'<\pphi{2}$ such that type 2 dominates over type 1 in the $\MMM$ associated to $(\pphi{1},\pphi{2})$, while there is coexistence in the MMM associated to $(\pphi{1}',\pphi{2})$. This can be achieved, moreover, when $\alp{1}=\alp{2}=0$.
\item For any small $\gamma>0$ we can choose $\pphi{1}$ and $\pphi{2}$ large but with relative fitness $\frac{\pphi{1}}{\pphi{2}}=\gamma$ such that both species coexist. 
\item In particular, given any small $\gamma>0$ one can choose two different sets of parameters with the same relative fitness $\gamma$ so that in one case type 1 is driven to extinction while in the other case there is coexistence.
Hence, and in contrast to models such as the multi-type contact process, relative fitness by itself is not enough to predict the qualitative behavior of the system.
\end{enumerate}

\noindent \Cref{Ej40} contains a sketch of the regions of the phase diagram of the process which have been probed in \Cref{theo:5}, which in particular makes these four facts apparent.
\end{remark}

Note that in our model we are assuming that epidemics affect each species independently.
This is natural when considering epidemics lacking cross-species transmission due to genetic distance, but is not a very realistic assumption if one thinks about the competition of different species of trees and takes the forest fire metaphor literally. It seems, nevertheless, that this assumption is important for coexistence to arise in our setting. This qualitative difference between epidemics with and without cross-species transmission is somewhat similar to the one found in the literature for predators, where the addition of a ``specialist" predator to Lotka-Volterra systems can be more effective in promoting coexistence than the addition of a ``generalist" one (see \cite{schreiber}).

A related model was studied by \citet{chan2006}, who proved coexistence for the two-type, continuous time contact processes in $\Z^2$ with the addition of a different type of forest fires, which act by killing all individuals (regardless of their type, and regardless of whether they are connected) within blocks of a certain size. 
They showed that if the weaker competitor has a larger dispersal range then it is possible for the two species to coexist in the model with forest fires; this contrasts with Neuhauser's result \cite{N92} for the model without forest fires for which such coexistence is impossible.
Our context is different, since we work on a random graph with forest fires which travel only along neighbors of the same type and which have an unbounded range, and since all species use the same (mean-field) dispersal neigborhoods. 
The techniques we use are also different, and the results we obtain are of a slightly different nature.
But the motivation is similar, and our results complement nicely with theirs.

The strategy we will use to prove Theorem \ref{theo:5intro} proceeds in three steps which can be described roughly as follows: first we approximate the evolution of the densities of sites occupied by each type as $N\to\infty$ by an explicit deterministic dynamical system, then we study the phase diagram of this dynamical system to find regions for coexistence and survival, and finally we argue that on those regions the behavior of our process tracks that of the limiting dynamical system.
The main challenge in implementing this strategy comes from the slow convergence of the empirical densities to the limiting dynamical system.
This is intrinsic in the very nature of our model: as we will explain in Section \ref{sec:phasediag}, and just as in the single-type case, due to the forest fire epidemics the two-type dynamical system presents a very complicated behavior which, from simulations, appears to be chaotic; this makes it hard to obtain a fine control on the distance between the finite system and its limit, for which it is essentially impossible to predict its evolution.
As a consequence, in the coexistence regime we are not able to show that the extinction times of both species grow exponentially in $N$ even in the case of mean-field growth, as one would expect.

Our proof of coexistence relies on showing that a certain quantity, $\pphi{1}\etac$, is larger than 1, where $\etac$ is defined in \eqref{eq:defeta} and represents the average competition effect that the strong species has on the weaker one when the latter is close to extinction. 
A similar argument could be used to show that if an analogous quantity $\pphi{1}\bar{\etac}$ is smaller than one (with $\bar{\etac}$ defined by changing inf's by sup's in \eqref{eq:defpbphi} and \eqref{eq:defeta}), then the weaker species decreases to extinction as soon as it reaches sufficiently small densities.
It is not unreasonable to conjecture that in fact the condition $\pphi{1}\bar{\etac}<1$ implies domination, and furthermore that $\etac$ and $\bar{\etac}$ should coincide, which would characterize a complete dichotomy for the qualitative behavior of the system, but pursuing this is outside the scope of this paper.

\section{The limiting dynamical system}\label{sec:limds}

Throughout the paper we will use the notation $\DS(h)$ to denote the dynamical system $\big(h^n(p)\big)_{n\geq0}$ defined from the iterates $h^n$ of a given map $h\!:\R^m\longrightarrow\R^m$.

\subsection{Derivation of the limit}\label{sec:derivlim}

The starting point of our arguments is an approximation of the evolution of the MMM densities by a deterministic dynamical system.
We begin by explaining where this limit comes from.
Since it makes no difference, we work here in the case of general $m\geq1$.

Recall that the epidemic parameters satisfy $\alp{i}{N}\longrightarrow\alp{i}\in[0,1)$ as $N\to\infty$.
Since the MMM dynamics is defined in two stages, it is natural to look for maps $f_{\beta},g_{\alpha}\!:\R^m\longrightarrow\R^m$ describing respectively the limiting densities after the growth and epidemic stages and then expect the limiting dynamical system to be given by $\DS(g_{\alpha}\circ f_{\beta})$.

Recalling the Poisson assumption on the offspring distribution, and since in the process we let each site choose its type uniformly at random from the particles it receives, a simple computation shows that the expected density of sites occupied by type $i$ after the growth stage is given by
\begin{equation}
\ff{i}(p) \coloneqq \left( 1-e^{-\sum_{i=1}^m\bet{i}\pp{i}} \right)\frac{\bet{i}\pp{i}}{ \sum_{i=1}^m\bet{i}\pp{i} }.\label{eq:ffMMM}
\end{equation}
This is our candidate function for the growth part.
The function $g_{\alpha}$, on the other hand, will depend on our particular choice of a random 3-regular connected graph for $G_N$.
In this case the graph looks locally like a 3-regular tree, so in order to guess a candidate for $g_{\alpha}$ we can pretend that the epidemic stage acts on the infinite 3-tree $\TT$. 
Let us also assume for a moment that $m=1$.
We need to analyze the effect of the epidemic when attacking a configuration of particles distributed as independent (thanks to the mean-field assumption) site percolation on $\TT$ with a given density $q$ (whose distribution, i.e. a product measure on $\{0,1\}^\TT$ where each vertex is occupied with probability $q$, we denote as $\Pe_{q}$).
Note that if $\Cr$ denotes the connected component of occupied sites containing $r$ then, conditionally on $\Cr$, the probability that $r$ survives is given by $(1-\alp{}{N})^{|\Cr|}\uno{|\Cr|>0}$.

As a consequence, we should expect the limiting probability that a given site is occupied, after the epidemic stage attacks a configuration with a fraction $q$ of occupied sites, to be given by
\[g_\alpha(q)\coloneqq\Pe_q(r \text{ is occupied},\,r \text{ survives the epidemic})=\Ee_q((1-\alpha)^{|\Cr|}\uno{|\Cr|>0})\]
(here $r$ is any vertex of $\TT$).
The right hand side can be computed explicitly:

\begin{prop}
	\label{prop1}
	For any $q\in[0, 1]$ and $\alpha\in(0,1)$,
  \begin{equation}\label{eq:galph}
  g_\alpha(q)= \tfrac{\left( 1-\sqrt{1-4(1-\alpha)q(1-q)} \right)^3}{8 (1-\alpha)^2q^2},
  \end{equation}
  while $g_0(q)=\lim_{\alpha\to0^+}g_\alpha(q)$, which equals $q^{-2}(1-q)^3$ for $q\geq1/2$ and $q$ for $q<1/2$.
\end{prop}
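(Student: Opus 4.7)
The plan is to exploit the tree structure of $\TT$ to reduce the computation of $g_\alpha(q)$ to a one-dimensional fixed-point equation via the standard branching recursion for percolation generating functions. Root $\TT$ at $r$ and split it into its three subtrees hanging off $r$; call a vertex in one of these subtrees a ``descendant.'' Each non-root descendant has exactly two children (since $\TT$ is $3$-regular). Letting $\CCC_i$ denote the occupied cluster of the $i$-th neighbor $v_i$ of $r$ within the $i$-th subtree (with $\CCC_i=\emptyset$ if $v_i$ is vacant), the three $\CCC_i$'s are independent under $\Pe_q$, and on the event that $r$ is occupied one has $\lvert\Cr\rvert=1+\sum_{i=1}^3\lvert\CCC_i\rvert$. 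Hence
\[
g_\alpha(q)\;=\;q\ts(1-\alpha)\,\psi(q)^3,\qquad\text{where}\qquad \psi(q):=\Ee_q\!\bigl[(1-\alpha)^{\lvert\CCC_1\rvert}\bigr].
\]

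First I would derive a quadratic fixed-point equation for $\psi$. Conditioning on whether $v_1$ is occupied, and then decomposing $\CCC_1$ through the two children of $v_1$ in its subtree (each of which is itself the root of a subtree of exactly the same type as the one rooted at $v_1$), independence gives
\[
\psi(q)\;=\;(1-q)\,+\,q\ts(1-\alpha)\,\psi(q)^2.
\]
This is a quadratic in $\psi(q)$ with roots
\[
\psi_{\pm}(q)\;=\;\frac{1\pm\sqrt{1-4(1-\alpha)q(1-q)}}{2(1-\alpha)q}.
\]
The discriminant is nonnegative for all $q\in[0,1]$ and $\alpha\in(0,1)$ because $q(1-q)\leq 1/4$, so both roots are real.

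Next I would pick the correct branch. Since $\psi(q)\in[0,1]$ and $\psi(0)=1$, I would verify by a Taylor expansion at $q=0$ that the minus sign is the right choice: $\psi_{-}(q)\to 1$ as $q\to 0^+$, whereas $\psi_{+}(q)$ blows up. (Equivalently, $\psi_{-}$ is the unique probabilistic solution, obtained as the monotone limit of the finite-depth truncations.) Substituting $\psi_{-}$ into $g_\alpha(q)=q(1-\alpha)\psi(q)^3$ and simplifying gives \eqref{eq:galph}.

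Finally, for $g_0$ I would take the $\alpha\to0^+$ limit. The recursion becomes $q\psi^2-\psi+(1-q)=0$, with roots $\psi_{\pm}(q)=\frac{1\pm\lvert 1-2q\rvert}{2q}$. For $q<1/2$ the probabilistic root is $\psi=1$, yielding $g_0(q)=q$; for $q\geq 1/2$ it is $\psi=(1-q)/q$, yielding $g_0(q)=(1-q)^3/q^2$, matching the claim. The only delicate point in the whole argument is selecting the correct branch of the square root (particularly in the $\alpha\to0^+$ limit, where the two branches cross at $q=1/2$, reflecting the percolation phase transition on the binary tree); this is resolved by the requirement that $\psi$ be the limit of the finite-depth truncations $\psi_n$ defined by $\psi_0\equiv 1$ and $\psi_{n+1}(q)=(1-q)+q(1-\alpha)\psi_n(q)^2$, which are monotone in $n$ and bounded in $[0,1]$, hence converge to $\psi_-$.
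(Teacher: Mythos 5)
Your argument is correct, but it follows a genuinely different route from the paper's. The paper proceeds combinatorially: it writes $g_\alpha(q)=\sum_{n\ge1}(1-\alpha)^n\Pe_q(|\Cr|=n)$, uses that $\Pe_q(|\Cr|=n)=A_n\,q^n(1-q)^{n+2}$ where $A_n$ counts connected $n$-vertex subtrees of the $3$-tree containing the root, relates $A_n$ to Catalan numbers through $A(x)=xC(x)^3+1$, and plugs in the closed form of the Catalan generating function. You instead derive a quadratic fixed-point equation for the generating function $\psi(q)=\Ee_q[(1-\alpha)^{|\CCC_1|}]$ of a single subtree cluster by conditioning on the root's neighbor, then write $g_\alpha(q)=q(1-\alpha)\psi(q)^3$ and select the attractive root $\psi_-$ (justified by monotone truncations, i.e.\ standard Galton--Watson theory). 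Both are correct and lead to the same closed form; your derivation has the advantage of being shorter, of not requiring the Catalan generating function as input, and of making the branch selection and the $\alpha\to0^+$ phase transition at $q=1/2$ transparent, whereas the paper's combinatorial route makes the cluster-size distribution itself explicit. It is also worth noting that the recursion $\psi=(1-q)+q(1-\alpha)\psi^2$ you derive is precisely the recurrence $W_{k+1}=q(1-\alpha_N)W_k^2+1-q$ used in the paper's proof of \Cref{convarbol}; your $\psi_-$ is the paper's attractive fixed point $\overline{W}$, so your proof in effect reuses a tool the paper deploys one lemma later.
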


The formula for $g_0$ coincides with the function appearing in \cite{DYR09}; the fact that $g_0(q)=q$ for $q<1/2$ reflects that in the weak epidemic regime $\alpha_N\to0$ the epidemic can only hit a giant cluster, which for site percolation on the 3-regular tree is seen only for $q\geq1/2$.
In contrast, when $\alpha>0$ the epidemic also attacks small clusters and the density of the population does not have to be above the critical percolation parameter of the network for it to kick in, so we observe its effects at all times.

Going back to the general case $m\geq1$, since the epidemic attacks each species independently and without cross-transmission, we deduce that the density of sites occupied by type $i$ after the epidemic stage acts on a population with initial densities  $q\in[0,1]^m$ should be given by
\begin{equation}
g_{\alpha}^{(i)}(q)=g_{\alp{i}}(q_i).
\end{equation}
In view of the above computations we define the candidate limiting dynamical system as $\DS(h)$ where, given $p\in[0,1]^m$, $p_1+p_2+\dots+p_m\leq 1$, $h(p)=\big(\hh{}{1}(p),\dotsc,\hh{}{m}(p)\big)$ is defined as
\begin{equation}
\hh{}{i}(p)=g_{\alp{i}}\circ \ff{i}(p)\label{eq:limh}
\end{equation}
(we omit the dependence of $h$ on the parameters for simplicity).

\subsection{Approximation result}

Recall the definition of the density process $\big(\rrho{k}\big)_{k\geq0}$ associated to the MMM.
A straightforward consequence of the following result (stated as Corollary \ref{cor4} below) is that the density process converges indeed to the dynamical system $\DS(h)$.
The result, however, goes much further, providing a quantitative estimate on the speed of convergence, which will be crucial in the proof of Theorem \ref{theo:5intro}.

\begin{theo}
	\label{teo1paso}
	Consider the MMM with $m$ types and assume that \eqref{eq:alphaconv} holds.
	Then given $\delta>0$ and $k\in\N$ there is a constant $C>0$, depending only on $\delta$ and $k$, such that for all $N\in\N$ and any initial condition $\eeta{0}$ we have (with $\ualpha_N=\min\{\alp{1}{N},\ldots,\alp{m}{N}\}$)
	\begin{equation}
	\label{enunc}
	\P\!\left(\big\|\rrho{k}-\hh{k}{}(\rrho{0})\big\|_\infty>\delta\right)\;\leq\;Ce^{-\ualpha_N\tsm\log_2(N)/5},
	\end{equation}
	where $\|x\|_\infty = \max_{i\in \{1,2,\dots,m\}} |x_i|$ for a vector $x\in\R^m$ ($\ell_\infty$ norm in $\R^m$).
\end{theo}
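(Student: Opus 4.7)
The plan is to reduce the statement to a one-step quantitative approximation and then iterate by induction on $k$. Concretely, I would first prove that if the density vector of some configuration is within a small neighborhood of $q\in[0,1]^m$, then after one MMM step the resulting density vector is within $\delta/(2k)$ of $h(q)$ with probability at least $1-Ce^{-\ualpha_N\log_2(N)/5}$. Since $h=g_\alpha\circ f_\beta$ is locally Lipschitz on the simplex, iterating $k$ times via the triangle inequality and taking a union bound over the $k$ steps yields the desired conclusion \eqref{enunc} (with $C$ absorbing the Lipschitz constants). I would treat the growth and epidemic halves of one step separately, since the sources of error are of very different nature.

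The growth half is the easy step. Conditional on $\eeta{k}$, every site of $G_N$ independently receives a Poisson number of offspring from each species and then picks its type uniformly among the arrivals, so the configuration $\eta_{k+1/2}$ is a product measure whose single-site marginals are exactly $\ff{i}(\rrho{k})$ by the calculation giving \eqref{eq:ffMMM}. A standard Chernoff/Hoeffding bound shows that the empirical densities of $\eta_{k+1/2}$ concentrate around $f_\beta(\rrho{k})$ at rate $e^{-cN}$, which is far smaller than the target error.

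The epidemic half is the substantial step. Here
\[\rrho{k+1}{i}=\tfrac{1}{N}\sum_{x\in G_N}\uno{\eta_{k+1/2}(x)=i}\,\uno{\mathcal{C}_i(x)\text{ survives}},\]
where $\mathcal{C}_i(x)$ is the monochromatic cluster of $x$ of type $i$ and survival means no site of $\mathcal{C}_i(x)$ is attacked. I would compute the expectation of each summand by first passing to the infinite 3-regular tree $\mathcal{T}$, where the analogous probability equals $g_{\alp{i}}\circ \ff{i}(\rrho{k})=\hh{}{i}(\rrho{k})$ by Proposition \ref{prop1}. To justify the swap I would truncate the survival event at radius $R_N=\log_2(N)/5$ (require the cluster not to reach distance $R_N$ from $x$) and use three ingredients: (a) with probability $1-o(1/N)$ a random 3-regular graph is locally tree-like inside balls of radius $R_N$ at a fraction $1-o(1)$ of its vertices, so the truncated survival probability computed in $G_N$ equals the one computed in $\mathcal{T}$; (b) a cluster of radius at least $R_N$ in $\mathcal{T}$ survives an epidemic of intensity $\alpha_N$ with probability at most $(1-\alpha_N)^{R_N}\leq e^{-\ualpha_N\log_2(N)/5}$, matching exactly the rate in \eqref{enunc}; (c) a concentration step for the empirical sum. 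For (c), noting that truncation at radius $R_N$ makes each summand depend only on the $R_N$-ball around $x$, I would either group vertices into $O(N/3^{R_N})$ nearly independent blocks or invoke a bounded-differences martingale inequality, producing Gaussian concentration at a rate far better than required once $3^{R_N}\ll N$.

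The principal obstacle is controlling untruncated large clusters during the epidemic: when $\ff{i}(\rrho{k})$ exceeds the percolation threshold $1/2$ on $\mathcal{T}$, clusters may be genuinely macroscopic, and the survival probability is no longer a bounded power of $1-\alpha_N$. I would handle this by stochastically dominating the cluster exploration on $\mathcal{T}$ by a Galton-Watson process and using that even for an infinite cluster the probability of surviving the epidemic is already tiny: any exploration reaching level $R_N$ contributes at most $(1-\alpha_N)^{R_N}$ to the survival probability, which is exactly the $e^{-\ualpha_N\log_2(N)/5}$ factor appearing in the theorem. A secondary, more technical, obstacle is that $\eta_{k+1/2}$ is only a product measure after conditioning on $\eeta{k}$, not with respect to the original randomness; I would handle this by first conditioning on $\eeta{k}$ (hence on $\rrho{k}$) and applying the one-step bound pointwise, so that the conditioning is absorbed in the induction hypothesis at step $k-1$.
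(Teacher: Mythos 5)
Your proposal is correct and follows essentially the same route as the paper's proof: reduce to $k=1$ via continuity/Lipschitz of $h$, truncate the epidemic at radius $L_N=\log_2(N)/5$, use local tree-likeness of $G_N$, bound the expectation error by comparison with a Galton--Watson exploration (your $(1-\alpha_N)^{R_N}$ observation is the content of the paper's Lemma \ref{convarbol}), and concentrate the empirical sum using the finite range of the truncated summands. The only cosmetic differences are that you propose Chernoff/bounded-differences where the paper is content with a Chebyshev bound via a direct variance computation (both give far better rates than the $e^{-\ualpha_N L_N}$ bottleneck), and you mention a separate concentration after the growth step, which is redundant once you condition on $\rrho{0}$ and use the exact product structure of $\eta_{1/2}$, as the paper does implicitly.
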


The bound on the right hand side is certainly not sharp but, as we explained in \Cref{rem:dichotomy}(i), it is strong enough for the purpose of deriving a dichotomy between domination and coexistence, as established in Theorem \ref{theo:5intro}.
That the bound gets better as $\ualpha_N$ gets larger is not surprising: the main contribution to the variability of the trajectory comes from the epidemic stage, which typically affects connected clusters with sizes of order $1/\ualpha_N$.
The main ingredient in the proof of this result is \Cref{convarbol}, which uses a comparison with a branching process to estimate the difference between $g$ and the expectation of the density obtained after the epidemic stage on a percolated 3-tree.

\begin{cor}
	\label{cor4}
	Suppose that \eqref{eq:alphaconv} holds and that $\rrho{0}$ converges to some $\pp$ such that $p_1+p_2+\dots+p_m\leq 1$, then  as $N\to\infty$, the density process $\big(\rrho{k}\big)_{k\geq 0}$ associated to the MMM converges in distribution (on compact time intervals) to the deterministic orbit, starting at $\pp$, of the dynamical system $\DS(h)$.
\end{cor}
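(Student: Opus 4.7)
The corollary is a direct consequence of \Cref{teo1paso} combined with the continuity of $h$, so the plan is essentially to assemble those two ingredients. I would fix a finite time horizon $K\in\N$ and observe that, since the limiting trajectory $\bigl(h^k(p)\bigr)_{k\geq 0}$ is deterministic, convergence in distribution of $\bigl(\rrho{0},\rrho{1},\dots,\rrho{K}\bigr)$ to $\bigl(p,h(p),\dots,h^K(p)\bigr)$ is equivalent to convergence of each coordinate $\rrho{k}$ to $h^k(p)$ in probability. By a union bound it therefore suffices to establish the latter for each fixed $k\leq K$; letting $K\to\infty$ then gives convergence on any compact time interval.

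For a given $k$, I would apply the triangle inequality:
\[
\bigl\|\rrho{k} - h^k(p)\bigr\| \;\leq\; \bigl\|\rrho{k} - h^k(\rrho{0})\bigr\| + \bigl\|h^k(\rrho{0}) - h^k(p)\bigr\|.
\]
The first term is controlled directly by \Cref{teo1paso}: since \eqref{eq:alphaconv} yields $\ualpha_N\log_2(N)\to\infty$, the bound $Ce^{-\ualpha_N\log_2(N)/5}$ tends to zero as $N\to\infty$, and since this estimate is stated uniformly in the initial configuration $\eeta{0}$, it applies after conditioning on $\rrho{0}$. For the second term, $h=g_{\alpha}\circ f_{\beta}$ is continuous, as is clear from the explicit formulas \eqref{eq:ffMMM} and \eqref{eq:galph}, so $h^k$ is continuous; hence the hypothesis $\rrho{0}\to p$ in probability transfers to $h^k(\rrho{0})\to h^k(p)$ in probability by the continuous mapping theorem.

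I do not expect any serious obstacle: all the real work has been absorbed into \Cref{teo1paso}. The only point worth double-checking when writing out the argument in detail is that the one-step bound there is genuinely uniform in the initial condition $\eeta{0}$ (the theorem is stated in this form), so that the randomness of $\rrho{0}$ can be handled cleanly when combining the two estimates above.
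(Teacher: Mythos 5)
Your proposal is correct and matches the paper's intended argument: the paper does not write out a proof of this corollary, presenting it simply as a "straightforward consequence" of \Cref{teo1paso}, and your assembly — triangle inequality splitting off the $h^k(\rrho{0})$ term, \Cref{teo1paso} for the stochastic error (using the uniformity in $\eeta{0}$), and continuity of $h^k$ plus the continuous mapping theorem for the initial-condition error — is precisely the intended route.
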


In the case with $m=1$ and $\alp{1}=0$, this is Theorem 2 of \cite{DYR09}.

\subsection{Phase diagrams}\label{sec:phasediag}

Our goal here is to determine parameter regions for the two-type $\DS(h)$ where domination and coexistence hold.
In this context we say that (here $h_i^k$ denotes the $i$-th coordinate of the $k$-th iterate of $h$):
\begin{itemize}[itemsep=3pt,leftmargin=15pt]
\item Species $i$ {\sl dominates} species $j$ if $\liminf_{k\to\infty}h^k_i(\vec{p})>0$ while $\lim_{k\to\infty}h^k_j(\vec{p})=0$.
\item There is {\sl coexistence} if $\liminf_{k\to\infty}h^k_i(\vec{p})>0$ for $i=1,2$.
\end{itemize}
In order to investigate the behavior in the two-type case it is instructive to first review the behavior of the limiting dynamical system for single-type MM, for which a very complete picture is available.

\subsubsection{The one-type system and bifurcation cascades}
Consider the case $m=1$.
For simplicity, in this case we omit the subscripts from the parameters defining the process.
In the weak epidemic regime for the MM, $\alpha_N\longrightarrow0$ (which corresponds to $\alpha=0$ in $\DS(h)$), we are back in the case studied in \cite{DYR09}.
In that situation one has the following:
\begin{itemize}[itemsep=3pt,leftmargin=15pt]
  \item If $\beta\leq 1$ then for every $p\in[0,1]$ the sequence $\hh{k}(p)$ decreases to $0$ as $k\rightarrow\infty$, $0$ being the unique fixed point of $f_\beta$ (and $\hh$).
  \item The epidemic is only seen when the system attains densities larger than $1/2$. 
  Since the unique fixed point $p^*$ of $f_\beta$ is in $(0,1/2)$ for all $\beta\in(1,2\log(2)]$, for such $\beta$ the orbit of $h^k(p)$ eventually gets trapped inside the interval $[0,\frac{1}{2}]$, where there are no epidemic outbreaks ($h\equiv f_{\beta}$).
  Inside this interval, $h^k(p)$ converges to $p^*$.
  \item If $\beta>\betac$ then the orbit of $h^k(p)$ is trapped inside the interval $[h(\frac12),\frac{1}{2}]$.
  In this case the fixed point of $f_\beta$ is larger than $\frac12$, so the successive growth stages drive the density above this value, at which time the epidemic kicks in and forces a relatively large jump back to $[h(\frac12),\frac{1}{2}]$.
  In this case $\DS(h)$ is chaotic (see \cite[Thm. 1]{DYR09}).
\end{itemize}

Thus the case $\beta\leq 1$ corresponds to the {\sl extinction} regime (at least for the limiting dynamical system), while for all $\beta>1$ we have $\liminf_{k\to\infty}h^k(p)>0$ (for all $p\geq0$), which corresponds to {\sl survival}.
In \cite{DYR09} the authors also prove versions of these results (including the convergence to the corresponding dynamical system) for the process running on the discrete torus.

For the dynamical system $\DS(h)$ with general $\alpha\in[0,1]$ we have:

\begin{prop}
  \label{prop8}
  Let $\alpha\in[0,1]$ and $\beta>0$.
  \begin{enumerate}[label=\emph{(\roman*)}]
    \item \textup{(Extinction)}\enspace If $(1-\alpha)\beta\leq 1$ then $\lim_{k\to\infty}\hh{k}(p)=0$ for all $p\in [0,1]$.
    \item \textup{(Survival)}\enspace If $(1-\alpha)\beta>1$ then $\liminf_{k\to\infty}\hh{k}(p)>0$ for all $p \in (0,1)$.
  \end{enumerate}
\end{prop}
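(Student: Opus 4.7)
The plan is to reduce both parts to the single global estimate
\begin{equation*}
g_\alpha(q)\;\le\;(1-\alpha)\ts q\qquad\text{for every }q\in[0,1],
\end{equation*}
which follows at once from the probabilistic representation: $g_\alpha(q)=\Ee_q\!\bigl[(1-\alpha)^{|\Cr|}\uno{|\Cr|\ge 1}\bigr]\le(1-\alpha)\Pe_q(|\Cr|\ge 1)=(1-\alpha)q$. Combined with the elementary inequality $f_\beta(p)=1-e^{-\beta p}\le\beta p$ (strict for $p>0$), this yields the global linear bound
\begin{equation*}
h(p)\;=\;g_\alpha(f_\beta(p))\;\le\;\phi\ts p\qquad\text{for every }p\in[0,1],
\end{equation*}
with strict inequality whenever $p>0$.

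For part (i), if $\phi<1$ iterating gives $h^k(p)\le\phi^kp\longrightarrow 0$. If $\phi=1$ the strict inequality $h(p)<p$ on $(0,1]$ forces the orbit to be strictly decreasing; its limit $L$ is a fixed point of $h$ and, by the same strict inequality, must equal $0$.

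For part (ii), a Taylor expansion of the square root in \eqref{eq:galph} (equivalently, isolating the $|\Cr|=1$ contribution of probability $q(1-q)^3$ in the probabilistic expression) yields $g_\alpha(q)=(1-\alpha)q+O(q^2)$, so that $h'(0)=(1-\alpha)\beta=\phi>1$. Fix $\phi'\in(1,\phi)$ and pick $\delta_0\in(0,1)$ so that $h(p)\ge\phi'\ts p$ for every $p\in[0,\delta_0]$. Given $p_0\in(0,1)$, the orbit cannot remain in $(0,\delta_0]$ forever: on this interval $h(p)>p$, so a confined orbit would be increasing with a limit that is a fixed point of $h$ in $(0,\delta_0]$, contradicting $h(p)>p$ there. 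Let $k_0$ be the first time $h^{k_0}(p_0)>\delta_0$ and set $c'\coloneqq\min_{p\in[\delta_0,1]}h(p)$. Since $f_\beta$ sends $[\delta_0,1]$ into $(0,1)$ and $g_\alpha$ is strictly positive on $(0,1)$ (using either \eqref{eq:galph} when $\alpha>0$ or the piecewise formula when $\alpha=0$), we have $c'>0$. A short case check then shows that $[c',1]$ is forward invariant under $h$: for $p\in[c',\delta_0)$ the linearization gives $h(p)\ge\phi'\ts p>c'$, while for $p\in[\delta_0,1]$ the bound $h(p)\ge c'$ is the definition of $c'$. As $h^{k_0+1}(p_0)\in[c',1]$, the orbit remains in $[c',1]$ from that point on, giving $\liminf_{k\to\infty}h^k(p_0)\ge c'>0$.

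The main obstacle is the bookkeeping in part (ii): when $c'<\delta_0$ the orbit can repeatedly dip below $\delta_0$, and one must lean on the local linearization $h(p)\ge\phi'\ts p$ to keep it from collapsing below $c'$. The other mildly delicate point is that in the $\alpha=0$ case the expression in \eqref{eq:galph} degenerates at small $q$, so one has to use the piecewise formula for $g_0$ recalled in \Cref{prop1} when verifying strict positivity of $h$ on $[\delta_0,1]$.
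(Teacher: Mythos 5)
Your argument is correct and matches the route the paper has in mind: the paper omits the proof, noting only that it follows from showing $0$ is an attractive fixed point of $\DS(h)$ when $\phi\leq1$ and repulsive when $\phi>1$, which is exactly what your global bound $h(p)\leq\phi\ts p$ (with strictness handling the borderline $\phi=1$) and the linearization $h'(0)=\phi$ plus the forward-invariance bookkeeping implement. No gaps; the only delicate points (the neutral case $\phi=1$, positivity of $h$ on $[\delta_0,1]$, and re-entry below $\delta_0$) are all handled correctly.
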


This result follows relatively easily from showing that, as a fixed point of $\DS(h)$, $0$ is attractive in case (i) and repulsive in case (ii), so we omit the proof.

The remaining question in the case of general $\alpha$ is to investigate the existence of a chaotic phase.
While a rigorous analysis appears to be much more difficult in this case due to the complicated algebraic structure of $h$, numerical simulations of the orbits of $\DS(h)$ suggest that the system presents {\sl bifurcation cascades}.
These are sequences of period doubling bifurcations that occur as the parameter $\beta$ is increased (for fixed $\alpha>0$), and which accumulate at a certain finite value of $\beta$ (the prototypical example of this behavior is the dynamical system defined by the quadratic map $x\longmapsto rx(1-x)$, which has a first period doubling bifurcation occurring at $r=3$ and then subsequent ones which continue up to $r\approx3.56$, where a chaotic regime arises; this phenomenon presents an intriguing form of universality \cite{FEI78,PCCT78}, see \cite{bifurcationCascades} for a good recent account). 
The bifurcation cascades appearing for $\alpha>0$ contrast with the behavior in the case $\alpha=0$, where the system proceeds directly from a stable fixed point to a chaotic phase, without passing through period-doubling bifurcations (see the discussion preceding \cite[Prop. 1.1]{DYR09} there); the parameter $\alpha$ has thus the effect of modulating the appearance of these cascades.
The left side of Figure \ref{figcascades} shows bifurcation diagrams for $\DS(h)$ which clearly suggest the occurence of this phenomenon in our system, while the right side shows a simulation of the evolution of the MM for finite $N$ and different values of $\beta$; note how some of the period doubling bifurcation behavior of the limiting system are still apparent in these simulations.

\begin{figure}[t]
  \begin{subfigure}{0.49\textwidth}
    \includegraphics[scale=0.4]{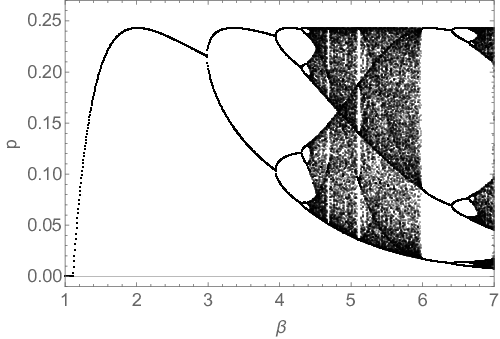}
  \end{subfigure}
  \begin{subfigure}{0.49\textwidth}
    \includegraphics[scale=0.4]{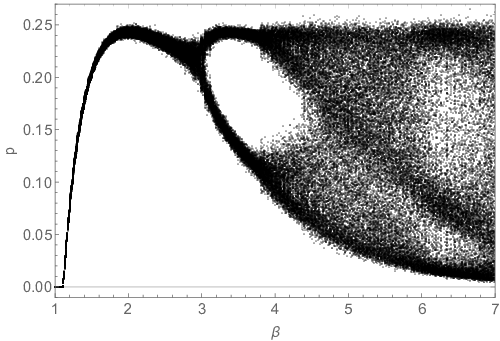}
  \end{subfigure}
\caption{{\small Left: Bifurcation diagram in $\beta$ for $\DS(h)$ with $\alpha=0.1$, showing the orbits of the system between iterations 900 and 1000 in the vertical direction for different values of $\beta$. Our simulations suggest that cascades appear for all $\alpha \in (0,1)$.\\[2pt]
\noindent Right: Simulation of the evolution of the MM for $\alpha=0.1$ and different values of $\beta$, from iteration 900 to 1000. Here $N\in \{20000,40000,100000\}$ (depending on $\beta$).}}
\label{figcascades}
\end{figure}

Figure \ref{f5} presents a schematic summary, partly based on simulations, of the behavior of the orbits of $\DS(h)$ as a function of $\alpha$ and $\beta$.

\begin{figure}[!h]
  \centering
  \includegraphics[scale=0.23]{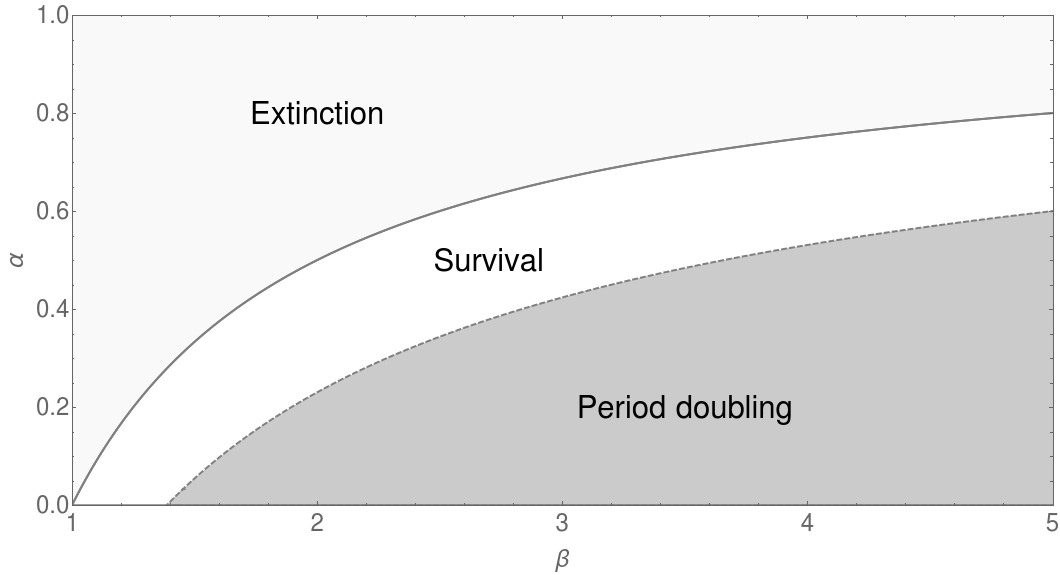}
  \caption{{\small Approximate phase diagram of $\DS(h)$. The transition between extinction and survival is justified by \Cref{prop8}, while the one governing the appearance of bifurcation cascades (dashed line) is based on simulations.}}
  \label{f5}
\end{figure}

\begin{remark}
The above discussion refers only to the behavior of the limiting dynamical system, and it is natural to wonder also about the dichotomy between extinction and survival at the level of the single-type particle system for finite $N$.
The phase diagram of the system in this case is much simpler than in the two-type setting of Theorem \ref{theo:5intro}, and one expects that if $(1-\alpha_N)\beta\longrightarrow\phi$ then the extinction time $\tt$ of the process should have a qualitatively different behavior in the cases $\phi<1$ and $\phi>1$.
In fact\footnote{See \url{https://arxiv.org/abs/1811.12468v3}, Sections 1.3 and 4.}, a simple comparison with a branching process shows that, for $\phi<1$ (the extinction phase) $\E(\tt)\leq C_1\log(N)$, while a separate, relatively simple argument, shows that for $\phi>1$ (the survival phase) $\E(\tt)\geq C_2N$ (for some fixed constants $C_1,C_2>0$).
We believe that in the survival phase the expected extinction time actually grows exponentially, i.e. that there are constants $c,C>0$ such that $\E(\tt)\geq C\tts e^{c N}$.
\end{remark}

\subsubsection{The two type dynamical system}

We come back now to the case $m=2$.
In this case a full description of the phase diagram as in \Cref{prop8} becomes extremely difficult to obtain due to the complicated explicit function $h$ arising from the competition between species.
In the following result we find instead some partial conditions which ensure either domination or coexistence.
In view of \Cref{prop8}, we will restrict the discussion to the case when the fitnesses of both species (defined in \eqref{eq:def-fitness}) satisfy $\pphi{i}>1$. 
For concreteness we will also assume that type 2 is fitter than type 1, i.e. $\pphi{2}>\pphi{1}$, and in order to ease notation, in everything that follows we denote, for a given initial condition $p\in[0,1]^2$ with $p_1+p_2\leq 1$ and any $i\in\{1,2\}$,
\[\pp{i}{k} = \hh{k}{i}(p).\]

\begin{theo}\label{teodinamico}
Consider the two-type dynamical system $\DS(h)$ with an arbitrary initial condition $p\in(0,1)^2$ with $p_1+p_2\leq 1$.
Then for any $1<\pphi{1}<\pphi{2}$
\[\liminf_{k\to\infty}\pp{2}{k}>0,\]
that is, the stronger species survives, while there are continuous functions $\mathcal{F}_1,\mathcal{F}_2:[0,1]\times\mathbb{R}^+\to\mathbb{R}$ such that:
\begin{enumerate}[label=\emph{(\roman*)}]
	\item \textup{(Coexistence)}~ If $\pphi{2}>2\log 2$ and $\pphi{1}>\mathcal{F}_1(\alp{2},\pphi{2})$, then 
  \[\liminf_{k\to\infty}\pp{1}{k}>0.\]
	\item \textup{(Domination)} ~If $\pphi{1}<\mathcal{F}_2(\alp{2},\pphi{2})$ then
  \[\lim_{k\to\infty}\pp{1}{k}=0.\]
\end{enumerate}
The functions $\mathcal{F}_1$ and $\mathcal{F}_2$ satisfy:
\begin{enumerate}[label=\emph{(\arabic*)}]
	\item For fixed $\alpha$, $\mathcal{F}_1(\alpha,\pphi)$ is increasing as a function of $\pphi$ and satisfies $\mathcal{F}_1(\alpha,\pphi)=\Theta(\sqrt{\pphi\log(\pphi)})$ for large $\pphi$. In particular, for large $\pphi$ we have $\mathcal{F}_1(\alpha,\pphi)<\pphi$.
	\item $\mathcal{F}_2(\alpha,\pphi)>1$ and for fixed $\alpha$, $\mathcal{F}_2(\alpha,\pphi)= 1+(1+o(1))\frac{\pphi}{2}e^{-\frac{3\pphi}{2}}$ for large $\pphi$. On the other hand, for fixed $\pphi$, $\mathcal{F}_2(\alpha,\pphi)$ is decreasing as a function of $\alpha$.
\end{enumerate}
\end{theo}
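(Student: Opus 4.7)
My approach is to reduce the coexistence and domination statements to properties of the one-dimensional dynamical system $\DS(h_2)$ that governs species 2 in isolation, where $h_2(q)\coloneqq g_{\alp{2}}(1-e^{-\bet{2}q})$. The starting point will be a linearization of $h$ near $\{p_1=0\}$: combining \eqref{eq:ffMMM}, \eqref{eq:galph} and $g_\alpha'(0)=1-\alpha$ should yield, uniformly in $p_2\in(0,1)$,
\[
h^{(1)}(p_1,p_2)=\pphi{1}\tts\tfrac{1-e^{-\bet{2}p_2}}{\bet{2}p_2}\tts p_1\bigl(1+O(p_1)\bigr),\qquad h^{(2)}(p_1,p_2)=h_2(p_2)+O(p_1),
\]
together with the global one-sided bound $h^{(1)}(p_1,p_2)\le\pphi{1}(1-e^{-\bet{2}p_2})p_1/(\bet{2}p_2)$, which will follow from the monotonicity of $f^{(1)}/p_1$ in $p_1$ and the elementary inequality $g_\alpha(x)\le(1-\alpha)x$. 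While $\pp{1}{k}$ remains small, $(\pp{2}{k})$ tracks an orbit $(q_k)$ of $\DS(h_2)$ with $O(\pp{1}{k})$ error per step, so that $\pp{1}{n}\asymp \pp{1}{0}\tts\pphi{1}^{n}\prod_{k=0}^{n-1}(1-e^{-\bet{2}q_k})/(\bet{2}q_k)$.

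Next I would introduce the orbit-wise geometric-mean thresholds
\[
\sigma\coloneqq\inf_{q_0}\liminf_{n\to\infty}\Bigl(\prod_{k=0}^{n-1}\tfrac{1-e^{-\bet{2}q_k}}{\bet{2}q_k}\Bigr)^{\!1/n},\qquad\bar\sigma\coloneqq\sup_{q_0}\limsup_{n\to\infty}\Bigl(\prod_{k=0}^{n-1}\tfrac{1-e^{-\bet{2}q_k}}{\bet{2}q_k}\Bigr)^{\!1/n},
\]
with inf/sup over orbits of $\DS(h_2)$, and set $\mathcal{F}_1=1/\sigma$, $\mathcal{F}_2=1/\bar\sigma$. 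A Lyapunov/bootstrap argument should then give: if $\pphi{1}\sigma>1$ then $\pp{1}{k}$ cannot drop permanently below any small threshold (coexistence), whereas if $\pphi{1}\bar\sigma<1$ the global upper bound combined with a uniform lower bound on $\pp{2}{k}$ force $\pp{1}{k}\to 0$ geometrically (domination). Continuity of $\mathcal{F}_1,\mathcal{F}_2$ in $(\alp{2},\pphi{2})$ will come from the continuous parameter-dependence of $\DS(h_2)$. I plan to treat survival of the fitter species independently: since $f^{(2)}$ is monotone decreasing in $p_1$, on the simplex $p_1+p_2\le 1$ one has $h^{(2)}(p_1,p_2)\ge g_{\alp{2}}(f^{(2)}(1-p_2,p_2))$, a one-dimensional map having $0$ as a repulsive fixed point when $\pphi{2}>1$ in the spirit of \Cref{prop8}(ii); combined with the observation that $\pp{2}{k}$ small forces $\pp{1}{k}$ also to be small (so that the one-dimensional bound becomes effective), this should yield $\liminf_k\pp{2}{k}>0$.

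The final task will be to establish the stated asymptotic behaviour of $\mathcal{F}_1$ and $\mathcal{F}_2$, which reduces to estimating $\sigma$ and $\bar\sigma$ for large $\pphi{2}$. Adapting the arguments of \cite{DYR09}, every orbit of $h_2$ in the regime $\pphi{2}>2\log 2$ is eventually trapped in an invariant interval $[h_2(\bar q),\bar q]$, within which it alternates between post-growth densities of order $1-e^{-\bet{2}\bar q}$ (where the competition factor is $\Theta(1/\bet{2})$) and post-epidemic densities of order $(1-\alp{2})(1-\bar q)^3$ (where the factor is $\Theta(1)$); balancing the two regimes in the geometric mean will produce $\sigma=\Theta(1/\sqrt{\pphi{2}\log\pphi{2}})$, hence $\mathcal{F}_1=\Theta(\sqrt{\pphi{2}\log\pphi{2}})$. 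The sharper $\mathcal{F}_2\asymp 1+(\pphi{2}/2)e^{-3\pphi{2}/2}$ should then follow by identifying an explicit extremal periodic orbit of $h_2$ along which the product is largest, together with the high-density expansion $g_{\alp{2}}(q)\sim(1-\alp{2})(1-q)^3$ near $q=1$; the monotonicities in $\alp{2}$ and $\pphi{2}$ will follow from the monotone parameter-dependence of $h_2$ and its invariant interval. The hard part will be precisely this last step, the orbit analysis of $\DS(h_2)$ in the (conjecturally chaotic) regime $\pphi{2}\gg 1$: I need pathwise, uniform-in-initial-condition control of $\sigma$ and $\bar\sigma$ without access to an invariant measure, which requires a delicate dissection of the invariant interval of $h_2$ and of the alternation between its two dynamical regimes.
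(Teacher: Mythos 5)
Your coexistence strategy is essentially the paper's: the linearization $\bh$ near $\{p_1=0\}$, the geometric-mean quantity (your $\sigma$ is exactly the paper's $\etac$ from \eqref{eq:defeta}), and the bootstrap argument that $\pphi{1}\etac>1$ prevents $\pp{1}{k}$ from dropping below a threshold all appear in the paper. But there are two genuine gaps that would prevent your proof from closing.

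First, and most seriously, your definition $\mathcal{F}_2=1/\bar\sigma$ for the domination threshold is precisely the condition the paper explicitly declines to prove, describing it in Section~\ref{sec:coexdom} as ``not unreasonable to conjecture \dots but pursuing this is outside the scope of this paper.'' The difficulty is that $\pphi{1}\bar\sigma<1$ controls the geometric mean of the competition factor along orbits of the \emph{one-dimensional} map $h_2$, but showing $\pp{1}{k}\to0$ requires controlling the competition factor along the actual two-dimensional orbit, for which $\pp{2}{k}$ is only an $O(\pp{1}{k})$-shadow of a one-dimensional orbit; in the (conjecturally chaotic) regime these errors can compound, and there is no invariant measure to appeal to. The paper sidesteps this entirely: its $\mathcal{F}_2$ in \eqref{eq:defcf2} comes from a stronger, explicit inequality \eqref{cond1} involving $a_1(\pphi{1})$, and the domination proof constructs a concrete set $B_1=\{p:\ll{1}(p)<1\}\cap([0,\ke]\times[c,\pb{2}])$ and checks by monotonicity (\Cref{crecimiento}, \Cref{lemmaA}, \Cref{bordes}) that $\ll{1}\circ h<1$ on its boundary curves $\mathcal{C}_1,\mathcal{C}_4,\mathcal{C}_5$. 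This is a structurally different argument that avoids any pathwise orbit analysis.

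Second, even for $\mathcal{F}_1$ you need more than the abstract identification $\mathcal{F}_1=1/\sigma$: the theorem asserts continuity, monotonicity in $\pphi{2}$, and the asymptotic $\Theta(\sqrt{\pphi{2}\log\pphi{2}})$, none of which is evident for the inf/liminf over a chaotic family of orbits. The paper instead \emph{lower-bounds} $\etac$ explicitly via the fixed point $P_f$ and maximum $P_m$ of $h(0,\cdot)$ (see \eqref{coexfinal1}--\eqref{coexfinal2} and \eqref{eq:defcf1}), giving $\mathcal{F}_1$ as an explicit, piecewise-defined function whose properties can be checked by calculus — crucially the split between the cases $P_f\le P_c$ and $P_f>P_c$ is what replaces the orbit analysis you flag as the hard step. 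Your plan to ``balance the two regimes in the geometric mean'' is heuristically right but does not yield a rigorous bound without the monotonicity facts of \Cref{maxg} and \Cref{crecimiento} (needed in particular because $g_\alpha$ is not monotone, which also affects your survival argument: $f^{(2)}$ decreasing in $p_1$ does not by itself imply $h^{(2)}=g_{\alp{2}}\circ f^{(2)}$ is minimized at $p_1=1-p_2$).
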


\begin{figure}[h]
	\centering
	\begin{tabular}{ll}
		\includegraphics[scale=0.3]{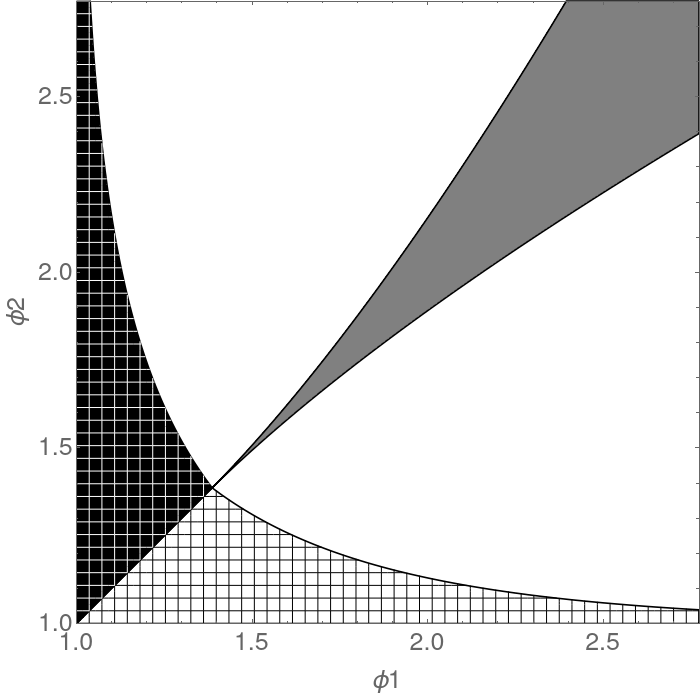}
		&
		\includegraphics[scale=0.3]{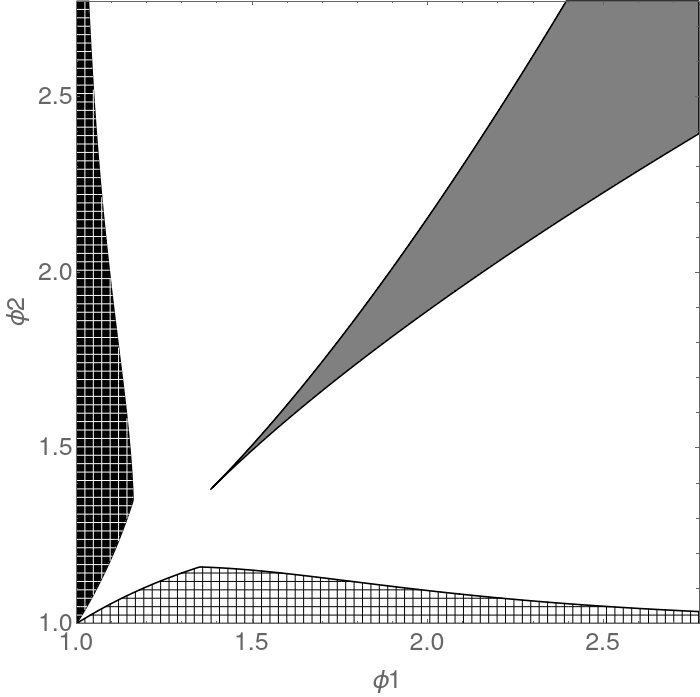}
	\end{tabular}
	\caption{{\small Summary of the domination and coexistence regimes for the MMM, for $\alp{1}=\alp{2}=0$ on the left and $\alp{1}=\alp{2}= 0.1$ on the right. 
			The white (resp. black) dashed regions represent the domination regime of type 1 over type 2 (resp. type 2 over type 1), and the solid gray regions correspond roughly to the coexistence regime (plotted based on their asymptotic behavior: as $\pphi{2}\rightarrow \infty$, $\pphi{1}$ grows as $\sqrt{\pphi{2}\log(\pphi{2})}$).
	}}
	\label{Ej40}
\end{figure}
We believe that the condition $\pphi{2}>2\log 2$ is not fundamental for coexistence and could be relaxed by carefully modifying our proofs.
The (rather complicated) definitions of the functions $\mathcal{F}_1$ and $\mathcal{F}_2$ are given in \eqref{eq:defcf1} and \eqref{eq:defcf2}, for which the properties described in the theorem can be proved analytically but whose numerical plots reveal additional features such as concavity of $\mathcal{F}_1$ and that $\mathcal{F}_2$ has a single critical point. An approximate phase diagram is given in Figure \ref{Ej40}, where it can be appreciated that as the $\alp{i}$'s increase the inequalities, the conditions become more restrictive and hence the regions given by the theorem shrink; this is a consequence of the chaotic behavior introduced by the epidemic stage, which reduces our control over the system, and which increases with the $\alp{i}$'s. Bifurcation diagrams corresponding to domination and coexistence regimes are shown in Figure \ref{Ej50}.

The intuition behind our coexistence result is the following. 
If the trajectory of the weaker type $1$ species remains close to zero, its effect on the trajectory of the type 2 species becomes negligible, meaning that type 2 evolves essentially as if it were alone, so that the evolution of $\pp{1}{k}$ can be approximated taking that of $\pp{2}{k}$ as given.
Condition $\pphi{1}>\mathcal{F}_1(\alp{2},\pphi{2})$ ensures that in this situation the type 1 species grows in average, thus moving away from low density values.
In the case of domination, the idea will be that starting from any initial condition the orbit of the dynamical system eventually gets stuck in a set $B$ where the condition $\pphi{1}<\mathcal{F}_2(\alp{2},\pphi{2})$ ensures that $\pp{1}{k}$ decays (exponentially fast) to 0.

\begin{figure}[h!]
  \centering
  \begin{tabular}{ll}
    \includegraphics[scale=0.4]{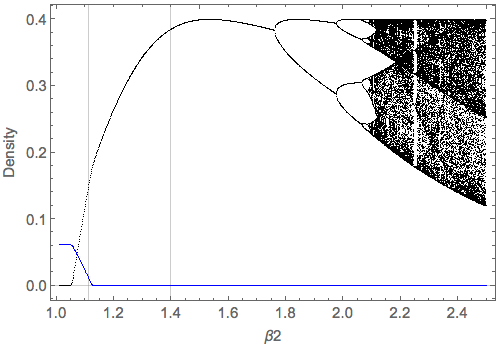}
    &
    \includegraphics[scale=0.4]{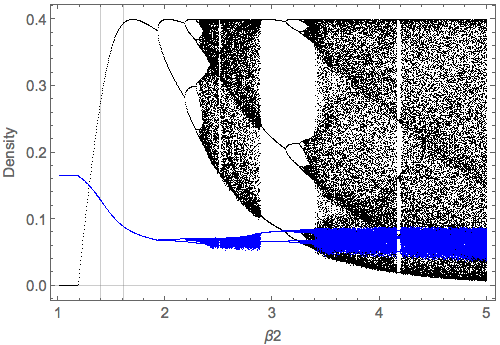}
  \end{tabular}
  \caption{{\small Bifurcation diagrams for type 1 (blue) and type 2 (black), with $\alp{1}=0.01$ and $\alp{2}=0.2$. On the left, with $\beta_1=1.99\log(2)$, type 1 goes from a stable fixed point to extinction as $\beta_2$ increases. On the right, with $\beta_1=2$, there is coexistence for large $\beta_2$; note how the chaotic behavior of the type 2 species is reflected on type 1 as well.}\label{Ej50}}
\end{figure}

\subsection{Connection with the particle system}
\label{conMMM}

We are finally ready to state the precise version of our main result (stated above as Theorem \ref{theo:5intro}), which extends the behavior derived in last section for the dynamical system $\DS(h)$ to the particle system.

\begin{theo}
  \label{theo:5} Let $\mathcal{F}_1$ and $\mathcal{F}_2$ be as in Theorem \ref{teodinamico}. For the two-species MMM on a random 3-regular graph, and under the assumptions of \Cref{theo:5intro}, we have: \textup{(i)} If $\pphi{2}>2\log 2$ and $\pphi{1}>\mathcal{F}_1(\alp{2},\pphi{2})$, then the coexistence statement \eqref{esperanzacoex}/\eqref{esperanzadom2} holds. \textup{(ii)} If $\pphi{1}<\mathcal{F}_2(\alp{2},\pphi{2})$, then the domination statement \eqref{esperanzadom1}/\eqref{esperanzadom2} holds.
\end{theo}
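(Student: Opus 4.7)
\textbf{Proof plan for Theorem \ref{theo:5}.} The strategy is to transfer the qualitative behavior of $\DS(h)$ established in Theorem \ref{teodinamico} to the particle system by iterating the one-step estimate of Theorem \ref{teo1paso} and coupling the low-density regime with a subcritical branching process. All three claims (survival of type 2 in \eqref{esperanzadom2}, coexistence \eqref{esperanzacoex}, and domination \eqref{esperanzadom1}) are unified by the principle that, with one-step error $\varepsilon_N=Ce^{-\ualpha_N\log_2(N)/5}$, a union bound lets us follow the orbit of $\DS(h)$ faithfully for roughly $\varepsilon_N^{-1/2}\sim e^{c\ualpha_N\log(N)}$ steps, which is exactly the time-scale appearing in \eqref{esperanzadom2}--\eqref{esperanzacoex}.

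For \emph{survival of type 2} and for the \emph{coexistence} statement (i), the first step is to extract from Theorem \ref{teodinamico} (via the positivity of $\liminf_{k\to\infty}\pp{i}{k}$) the existence of an absorbing ``trapping region" $A\subset(0,1)^2$, bounded away from the relevant coordinate axes, a number $k_0$ of steps, and $\delta>0$ such that for every initial condition $p$ in the relevant open set the orbit $h^k(p)$ enters $A$ by time $k_0$ and, once in $A$, stays in a $2\delta$-interior of $A$ forever. The trapping-region construction should only require that the inequality defining $\mathcal{F}_1$ (resp.\ $\mathcal{F}_2$) holds with a fixed slack, so after reducing to a compact set of initial densities the choice of $A$, $k_0$, $\delta$ is uniform. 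Now one iterates Theorem \ref{teo1paso} in blocks of length $k_0$: conditionally on $\rrho{j k_0}\in A$, the event $\{\|\rrho{(j+1)k_0}-h^{k_0}(\rrho{j k_0})\|<\delta\}$ has probability at least $1-C\varepsilon_N$ by the Markov property, and on this event $\rrho{(j+1)k_0}\in A$. A union bound over $j\leq e^{c_1\ualpha_N\log N}$ blocks (for $c_1$ smaller than the constant inside $\varepsilon_N$) yields that all densities remain in $A$ up to that horizon, and since $A$ is bounded away from $\{\rho^{i}=0\}$ the corresponding extinction time is at least $e^{c_1\ualpha_N\log N}$; this gives \eqref{esperanzadom2} and, in case (i), \eqref{esperanzacoex} simultaneously.

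For the \emph{domination} statement (ii) we first run the previous block-iteration argument for a number of steps $k_1(\delta)$ just large enough that the $\DS(h)$-orbit satisfies $\pp{1}{k_1}<\delta$ for some small $\delta$ to be fixed; by Theorem \ref{teodinamico}(ii) and continuity in the initial condition this $k_1$ can be chosen independently of $N$, and Theorem \ref{teo1paso} gives $\rrho{k_1}{1}<2\delta$ with probability $1-C\varepsilon_N$. From that time on, the key observation is that when $\rrho{k}{1}$ is small the type-1 offspring per individual in the next growth step is essentially $\beta(1)e^{-\beta(2)\rho^{2}}\rho^{2}/(\beta(2)\rho^{2})\cdot(\text{correction})$, so the inequality $\pphi{1}<\mathcal{F}_2(\alp{2},\pphi{2})$ translates into a strict mean-offspring bound of the form $(1-\alp{1})\mathbb{E}[\text{new type-1 per particle}\mid \mathcal{F}_k]<\lambda<1$ as soon as $\rrho{k}{2}$ lies in the trapping region for type $2$. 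One then dominates the type-1 population by a subcritical Galton--Watson process with mean $\lambda$, starting from at most $N$ particles, exactly as in the proof of the extinction part of Theorem \ref{theo:4}(i); this yields $\tt{1}\leq c_1'\log N$ with failure probability $\log(N)e^{-c_2'\ualpha_N\log N}$ after taking a union bound over the $O(\log N)$ blocks during which we need type 2 to remain in its trapping region.

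\textbf{Main obstacle.} The subtle point is the uniform control of the trapping region $A$ and of the slack in the fitness-competition inequalities as $\pphi{1}$ sits close to $\mathcal{F}_j(\alp{2},\pphi{2})$: Theorem \ref{teodinamico} only delivers qualitative liminf/limit statements, so one must revisit its proof to upgrade them to quantitative lower/upper bounds that survive small perturbations. Equivalently, in the domination proof one must ensure that while the type-1 density is performing its $\log N$-step decay to zero the type-2 density does not wander out of its own trapping region; because the dynamical system is (conjecturally) chaotic, we cannot afford to track individual trajectories and must instead work with averaged quantities analogous to $\etac$ in \eqref{eq:defeta}, which is precisely what Theorem \ref{teodinamico} and the precise definitions of $\mathcal{F}_1,\mathcal{F}_2$ in \eqref{eq:defcf1}--\eqref{eq:defcf2} are designed to encode.
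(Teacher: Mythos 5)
Your transfer architecture is the same as the paper's: iterate the one-step approximation of \Cref{teo1paso} together with the (strong) Markov property over exponentially many blocks to keep the densities inside a good region of $(0,1)^2$, and, for domination, couple the type-1 population with a subcritical Galton--Watson process exactly as in \Cref{theo:4}(i), taking $n=c_1'\log N$. The domination half in particular matches the paper closely, where the good set $B$ furnished by \Cref{extcoex}(iii) has one-step parameter $\bar k=1$ and the bound $(1-\alpha(1))f_\beta^{(1)}(p)\le\gamma_1 p_1$ on $B$ plays the role of your ``strict mean-offspring bound''.

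There is, however, a genuine gap, and it sits precisely where you placed your ``main obstacle''. First, the existence of the quantitatively robust good regions is not a technical upgrade to be supplied later: it is the content of \Cref{extcoex} (via the notion of interior-recurrent sets, \Cref{inttrap}, and \Cref{trapmmm}), whose proof occupies most of Section 5 of the paper; your proposal assumes it. Second, the specific form you assume is too strong. You require a region $A$ such that once the orbit enters it, it remains in a $2\delta$-interior of $A$ \emph{forever}, i.e.\ one-step forward invariance with a uniform margin. In the coexistence regime this is not what the dynamical-system analysis delivers: because of the epidemic-driven (conjecturally chaotic) oscillations, the orbit of $\DS(h)$ started near the boundary of the natural candidate set $[c_1,\mathsf u_1]\times[c_2,\mathsf u_2]$ can only be shown to return to a $\delta'$-interior within a bounded number $\bar k$ of steps, possibly leaving the set in between (this is exactly condition (ii) of \Cref{inttrap}, with $\delta'<\delta$). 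Consequently your per-block union bound, which needs the density to sit in $A$ at every block time with a margin, does not go through as stated; the paper instead decomposes the stochastic trajectory into excursions between successive return times to $A$, bounds each excursion length by $\bar k$ with probability $1-Ce^{-\ualpha_N\log_2(N)/5}$ (\Cref{trapmmm}), and deduces survival from the weaker fact that $\rho^N_k\in A\subset(0,1)^2$ for some $k\ge n$ already forces both extinction times to exceed $n$. To repair your plan you would either have to construct a genuinely forward-invariant region with a uniform margin (not available from \Cref{teodinamico} or its proof) or replace the block scheme by the return-time/excursion argument based on interior-recurrence, which is what the paper does.
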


\Cref{theo:5intro} follows directly from this result when taking $\underline{\phi}=\mathcal{F}_1(\alp{2},\pphi{2})$ and $\overline{\phi}=\mathcal{F}_2(\alp{2},\pphi{2})$, since the former is smaller than $\pphi{2}$ for all sufficiently large $\pphi{2}$, and the latter is always larger than $1$. Observe that the properties of $\mathcal{F}_1$ and $\mathcal{F}_2$ give the behavior stated in \Cref{rem:cdref}. Indeed, for the first item we can fix a large $\pphi{2}$ and then take $\pphi{1}'$ close to $\pphi{2}$ so that there is coexistence for the pair $(\pphi{1}',\pphi{2})$, while at the same time taking $\pphi{1}$ close to $1$ so that there is domination for the pair $(\pphi{1},\pphi{2})$. For the second and third items of \Cref{rem:cdref} we can fix a large $\pphi{2}$ and $\pphi{1}$ close to $1$ so that the system exhibits domination while having relative fitness $\frac{\pphi{1}}{\pphi{2}}=\gamma$ as small as wanted. By taking $\pphi{2}'$ even larger we can choose $\pphi{1}'$ close to (but larger than) $\underline{\pphi}$, which is $\Theta(\sqrt{\pphi{2}'\log(\pphi{2}')})$, so that the process exhibits coexistence while having relative fitness $\frac{\pphi{1}'}{\pphi{2}'}=\gamma$.

The proof of \Cref{theo:5} is based on a stronger version of the coexistence and domination for the dynamical system (see \Cref{extcoex}), which we can then translate to the particle system by means of Theorem \ref{teo1paso}. 
It follows that any improvement on our knowledge of $\DS(h)$ directly improves Theorem~\ref{theo:5}, and that the same ideas could be applied in principle to the system with $m>2$, as soon as the dynamical system is well understood.

\section{Proof of the approximation result}\label{convergenceproofs}

The goal of this section is to prove \Cref{teo1paso}.
As a first step we derive the explicit formula \eqref{eq:galph} for the expected density $g_{\alpha}$ after the epidemic stage on a percolated 3-tree.
Recall that $\TT$ denotes an infinite 3-tree, $\Pe_p$ denotes the site percolation measure on $\TT$ with density $p$, and $\Cr$ denotes the percolation cluster containing a given vertex $r$.

\begin{proof}[{Proof of Proposition \ref{prop1}}]\label{proof:prop1}
	We have
	\begin{equation}\label{eq:eep}
  \textstyle\Ee_p((1-\alpha)^{|\Cr|}\mathds{1}_{|\Cr|>0}) = \sum_{n=1}^\infty (1-\alpha)^n\Pe_p(|\Cr|=n).
  \end{equation}
	Let $A_n$ be the number of possible connected components of size $n$ in a 3-tree rooted at $r$, so that $\Pe_p(|\Cr| = n)=A_n\ts p^n(1-p)^{n+2}$ (notice that $n+2$ is the number of vacant sites surrounding a cluster $\Cr$ of size $n$).
	Noting that a 3-tree is a root connected to three binary trees and recalling that the analog of $A_n$ for a binary tree is given by the Catalan numbers $C_n$, we get $A_0=1$ and $A_{n+1}=\sum_{i=0}^n\sum_{j=0}^{n-i} C_iC_jC_{n-i-j}$.
	Defining the generating functions $A(x)=\sum_{n=0}^\infty A_nx^n$ and $C(x)=\sum_{n=0}^\infty C_nx^n$, the above equation gives
	$A(x)=x\tts C(x)^3+1=\frac18x^{-2}(1-\sqrt{1-4x})^3+1$,
	where we have used the explicit formula for $C(x)$ (see \cite{OEIS02}).
	From this we conclude that the left hand side of \eqref{eq:eep} equals
	\[\textstyle\sum_{n=1}^\infty (1-\alpha)^np^n(1-p)^{n+2}A_n=(1-p)^2\big(A((1-\alpha)p(1-p)) -1 \big) =\tfrac{\left( 1-\sqrt{1-4(1-\alpha)p(1-p)} \right)^3}{8 (1-\alpha)^2p^2}.\qedhere\]	
\end{proof}

The remainder of this section is based on  a quantitative version of the arguments in \cite{DYR09}. 

Assume that \eqref{eq:alphaconv} holds.
The function $h$ has been defined in terms of the behavior of the system when $\GN$ is replaced with an infinite $3$-regular tree, so in our approximation it will be convenient to focus on the vertices whose neighborhoods look locally like a tree. With this in mind define
\[H_N = \{ x\in \GN: G_N\cap B(x,L_N) \text{ is a finite 3-regular tree} \}\]
with $L_N=\log_2(N)/5$. 
From the proof of Lemma 3.2 in \cite{DYR09} we get that
\begin{equation}\label{eq:fromL32DR}
\tfrac1{N}\E(\GN\!\setminus\! H_N)\leq C\tts N^{-3/5}
\end{equation}
for some $C>0$; in particular, the expected density of sites in $H_N$ goes to $1$.
We will use this to control the process locally in balls of radius $L_N$; in fact, as the next result shows, infections coming from further away have a vanishing effect on the system.
Let $\beeta{1}$ be defined similar to $\eeta{1}$, with the difference that for $x\notin H_N$ we set $\beeta{1}(x)=0$ and for $x\in H_N$ the epidemic stage ignores infections coming to $x$ from vertices outside $B(x,L_N)$. We also let $\brho{1}$ the vector of densities $(\brho{1}{1},\ldots,\brho{1}{m})$.
\begin{lemma}\label{midstep}
	For any $\varepsilon>0$ there exists $N_0$ such that for all $N\geq N_0$ and $\rrho{0}$ one has
	\begin{align}
    &\label{ineq:4}|\E(\brho{1}{j}|\GN)-\hh{}{j}(\rrho{0})|\leq \varepsilon+\sfrac{1}{N}|\GN\tsm\setminus\tsm H_N|
    \end{align}
	for each species $j$. Moreover, for all $\rrho{0}$,
	\begin{align}	
	&\label{ineq:2}\E\left(\big|\sfrac{1}{N}|\eeta{1}{j}\cap H_N|-\brho{1}{j}\big|\right)\leq e^{-\alp{j}{N}L_N}.\,
	\end{align}
  where $\eeta{k}{j}$ denotes the set of vertices $x$ such that $\eeta{k}(x)=j$.
\end{lemma}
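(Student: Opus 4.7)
\emph{Post-growth product structure.} The first step is to observe that, thanks to the Poisson offspring assumption, the post-growth configuration $\eta_{1/2}$ is \emph{exactly} a product measure. Indeed, each type-$i$ source emits a $\operatorname{Poisson}(\bet{i})$ number of descendants sent to i.i.d.\ uniform targets, so by Poisson thinning the arrivals of type $i$ at each site $y$ are independent $\operatorname{Poisson}(\bet{i}\rrho{0}{i})$, jointly independent across sites and types. Choosing the type uniformly among the arriving particles then yields $\Pe(\eta_{1/2}(y)=j)=\ff{j}(\rrho{0})=:q_j$ independently across $y$, matching \eqref{eq:ffMMM}. This is what makes the infinite-tree heuristic behind Proposition~\ref{prop1} exact for the growth step and reduces the rest of the argument to a comparison between the local truncated epidemic and its infinite-tree version.

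\emph{Proof of \eqref{ineq:4}.} Write $\E(\brho{1}{j})=\frac1N\sum_{x\in\GN}\Pe(\beeta{1}(x)=j)$ and split into $x\in H_N$ and $x\notin H_N$; the second range contributes at most $\E|\GN\setminus H_N|/N\to 0$ by Lemma~3.2 of \cite{DYR09}. For $x\in H_N$, $B(x,L_N)$ is a finite $3$-regular tree and, by the previous step, its type-$j$ occupation is i.i.d.\ $\operatorname{Bernoulli}(q_j)$; letting $\mathcal{C}_x$ denote the corresponding cluster of $x$ inside $B(x,L_N)$, $\Pe(\beeta{1}(x)=j)=\Ee_{q_j}\!\bigl[(1-\alp{j}{N})^{|\mathcal{C}_x|}\uno{|\mathcal{C}_x|>0}\bigr]$, which is precisely the quantity of Proposition~\ref{prop1} truncated to depth $L_N$. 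The two expressions differ only through clusters that escape $B(x,L_N)$; any such cluster must contain a length-$L_N$ path from $x$, so the contribution of this discrepancy is at most $(1-\alp{j}{N})^{L_N}\le e^{-\alp{j}{N}L_N}\to 0$ by \eqref{eq:alphaconv}. Combined with the continuity of $\alpha\mapsto g_\alpha(q_j)$ at $\alp{j}$ this gives \eqref{ineq:4}.

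\emph{Proof of \eqref{ineq:2}.} Couple the true and truncated epidemics by sharing per-vertex attack Bernoullis; then the truncated type-$j$ cluster of $x$ is a subset of its true type-$j$ cluster, so any attack killing $x$ in the truncated process also kills it in the true one, hence $\eeta{1}{j}\subseteq\beeta{1}{j}$. Since $\brho{1}{j}=\frac1N|\beeta{1}{j}|$ this yields the decomposition
\[\big|\tfrac{1}{N}|\eeta{1}{j}\cap H_N|-\brho{1}{j}\big|=\tfrac{1}{N}|\beeta{1}{j}\setminus H_N|+\tfrac{1}{N}|(\beeta{1}{j}\setminus\eeta{1}{j})\cap H_N|.\]
The first summand has expectation at most $\E|\GN\setminus H_N|/N$, which is $o(e^{-\alp{j}{N}L_N})$ by the standard estimate on the number of non-tree-like balls of radius $L_N$ in a random $3$-regular graph. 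For the second, a vertex $x\in H_N$ contributes only when its true type-$j$ cluster $C_x$ escapes $B(x,L_N)$, no attack hits $C_x\cap B(x,L_N)$, and some attack hits $C_x\setminus B(x,L_N)$; the escape condition forces $|C_x\cap B(x,L_N)|\ge L_N$, so the probability of the no-inside-attack event is at most $(1-\alp{j}{N})^{L_N}\le e^{-\alp{j}{N}L_N}$. Summing over $x$ and dividing by $N$ gives \eqref{ineq:2}.

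\emph{Main obstacle.} The only delicate point is the length-$L_N$ path estimate underlying both parts, which relies crucially on $B(x,L_N)$ being a tree for $x\in H_N$: the branch along which the cluster escapes $B(x,L_N)$ is then a single monotone path, forcing the inside portion of the cluster to already be long and making the exponential factor $(1-\alp{j}{N})^{L_N}$ beat both the truncation error and the probability of an attack outside the local ball. Everything else reduces to the Poisson product structure from the growth step and the tree-likeness of $\GN$ provided by \cite[Lem.~3.2]{DYR09}.
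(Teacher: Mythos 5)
Your proof is correct and reaches both conclusions of the lemma, but via a route that differs from the paper's at the core technical step, so a comparison is in order. Where you make the mean-field product structure of $\eta_{1/2}$ explicit and then bound the discrepancy between the truncated and infinite-tree expectations \emph{directly} (an escaping cluster must contain at least $L_N$ vertices inside $B(x,L_N)$, forcing the factor $(1-\alp{j}{N})^{|\mathcal{C}_x\cap B(x,L_N)|}\le(1-\alp{j}{N})^{L_N}\le e^{-\alp{j}{N}L_N}$), the paper instead isolates precisely this discrepancy into Lemma~\ref{convarbol}, which it proves via the quadratic recurrence $W_{k+1}=q(1-\alp{}{N})W_k^2+1-q$ and the contraction around its attractive fixed point $\We$. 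Your direct argument is shorter, yields the same exponential rate (with the cleaner constant $C'=1$), and also handles \eqref{ineq:2}, since the survival-in-the-truncated-but-not-the-true-epidemic event carries exactly the same $(1-\alp{j}{N})^{L_N}$ factor (this is what the paper encodes as the $\uno{Z_{L_N-1}=0}$ variant of Lemma~\ref{convarbol}). What your shortcut loses is the recurrence structure that the paper exploits in Remark~\ref{rem:lalala} to replace $e^{-\alp{j}{N}L_N}$ by the sharper $e^{-c\sqrt{\log N}}$ when $\alp{j}{N}\to 0$; that refinement needs the fixed-point contraction rate, not just the crude path-length bound. Your decomposition for \eqref{ineq:2} is in fact slightly more careful than the paper's, since you explicitly carry the term $\frac1N|\beeta{1}{j}\setminus H_N|$ which the paper tacitly absorbs.

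One small imprecision: to pass from $g_{\alp{j}{N}}(q_j)$ to $g_{\alp{j}}(q_j)=\hh{}{j}(\rrho{0})$ you invoke ``continuity of $\alpha\mapsto g_\alpha(q_j)$ at $\alp{j}$.'' Since the lemma requires $N_0$ to be chosen uniformly over $\rrho{0}$ (equivalently over $q_j\in[0,1]$), pointwise continuity in $\alpha$ is not quite enough; you need the joint continuity of $(\alpha,q)\mapsto g_\alpha(q)$ on $[0,\bar\alpha]\times[0,1]$ and hence uniform convergence $g_{\alp{j}{N}}\to g_{\alp{j}}$, which is exactly what the paper states. This is an easy fix but worth saying.
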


\begin{proof}
	Pick a vertex $r\in \GN$ uniformly at random and use the definition of $\beeta{1}$ to express $\E(\brho{1}{j}|\GN)$ as
    \[
    \E(\brho{1}{j}|\GN)=\P(\beeta{1}(r)=j|\GN)=\E\Big(\uno{r\in\eeta{1/2}{j}\cap H_N}(1-\alp{j}{N})^{|\Cr^j\cap B(r,L_N)|}|\GN\Big),
    \]
	where $\Cr^j$ is the connected component of type $j$ containing $r$ at time $1/2$.  The event $r\in H_N$ implies that $B(r,L_N)$ is a 3-regular tree, and by the mean-field assumption for the growth stage, at time $1/2$ each vertex is occupied by a type $j$ individual independently with probability $q=\ff{j}(\rrho{0})$. As a consequence, $|\Cr^j\cap B(r,L_N)|$ is the size of the cluster containing $r$ in the percolated 3-regular tree, which we represent as the total amount of individuals of a Galton-Watson process $Z_0,Z_1,\ldots, Z_{L_N}$. More precisely since a 3-regular tree can be seen as a vertex connected to the root of three binary trees, we set the offspring distribution of the first generation of the Galton-Watson process to be a Binomial$[3,q]$ and of all subsequent generations to be a Binomial$[2,q]$,  with  $Z_0=\uno{r\in\eeta{1/2}{j}}$, giving the expression
\begin{equation}
  \begin{aligned}
  \E(\brho{1}{j}|\GN)&=\;\ts\E\big({\bf1}_{\{r\in H_N\}}Z_0(1-\alp{j}{N})^{Z_0+Z_1+\cdots+Z_{L_N}}|\GN\big)\\[4pt]
  &=\;\ts\E({\bf1}_{\{r\in H_N\}}|\GN)\ts\E\big(Z_0(1-\alp{j}{N})^{Z_0+Z_1+\cdots+Z_{L_N}}\big),
  \end{aligned}\label{eq:brhobd}
  \end{equation}
	where the second equality comes from the fact that given the event $r\in H_N$, the variables $Z_0,Z_1,\ldots, Z_{L_N}$ do not depend on the particular realization of $G_N$. For the expression on the right we have the following result concerning Galton-Watson processes, whose proof is postponed to the appendix:
	
	\begin{lemma}
		\label{convarbol}
		Take $\alpha\in(0,1)$ and a Galton-Watson process $Z_0,Z_1,\ldots$ as above. Then, there is a $C'>0$ independent of $\alpha$ such that for all $N$ and all $q\in[0,1]$, 
		\begin{equation}
		\label{princaprox}
		\big|\ts\E\big(Z_0(1-\alp{}{})^{Z_0+Z_1+\cdots+Z_{L_N}}\big)-	g_{\alp{}{}}(q)\big|\leq C' e^{-\alp{}{}L_N}.
		\end{equation}
	\end{lemma}

Using \eqref{princaprox} and the fact that $g_{\alp{j}{N}}$ converges uniformly to $g_{\alp{j}}$, and since the constant $C'$ in \Cref{convarbol} does not depend on $q$, we deduce that for large enough $N$
\[|\E(\brho{1}{j}|\GN)-\hh{}{j}(\rrho{0})|\leq \varepsilon+\P(r\notin H_N|\GN)\]
whence \eqref{ineq:4} follows.

Now we prove \eqref{ineq:2}. Notice that $\brho{1}{j}-\frac{1}{N}|\eeta{1}{j}\cap H_N|$ corresponds by definition to the fraction of vertices $x$ that belong to $H_N$ and which at time $\frac12$ are occupied by an individual of type $j$ that survives the restricted epidemic but not the unrestricted one. In particular, for any such vertex there must be an open path to the boundary of $B(x,L_N)$ used by the unrestricted infection to kill $x$, so we deduce
	\[\E\Big(\Big|\sfrac{1}{N}|\eeta{1}{j}\cap H_N|-\brho{1}{j}\Big|\Big)\;\leq\;(1-\alp{j}{N})^{L_N}\;\leq\;e^{-\alp{j}{N}L_N}.\]
\end{proof}
\smallskip

\begin{proof}[Proof of \Cref{teo1paso}]
Observe first that, since $\delta>0$ is arbitrary and from the uniform continuity of $h$, we only need to prove the statement of the theorem for $k=1$. Even further, it is enough to show that for any fixed $j\in\{1,\ldots,m\}$ and $\delta>0$ we can find $C>0$ as in the statement such that
\begin{equation}
\label{eqj}
\P\!\left(\big|\rrho{1}{j}-\hh{}{j}(\rrho{0})\big|>\delta\>\right)\;\leq\;Ce^{-\alp{j}{N}L_N}.\end{equation}
Define $H_N$ and $\beeta{1}$ as before.
The left hand side of \eqref{eqj} is bounded by
\begin{multline}\label{eq:princ2}
\P\!\left(\big|\rrho{1}{j}-\sfrac{1}{N}|\eeta{1}{j}\cap H_N|\big|>\sfrac{\delta}3\right)+\P\!\left(\big|\sfrac{1}{N}|\eeta{1}{j}\cap H_N|-\brho{1}{j}\big|>\sfrac{\delta}3\right)\\
+\P\!\left(|\brho{1}{j}-\hh{}{j}(\rrho{0})|>\sfrac{\delta}3\right),
\end{multline}
and hence the result will follow after showing that each term on the right hand side is bounded by $Ce^{-\alp{j}{N}L_N}$ for some $C$, independently of $\rrho{0}$.
For the first term on the right hand side of \eqref{eq:princ2} we use Markov's inequality to get
\begin{align*}
\P\!\left(\big|\rrho{1}{j}-\sfrac{1}{N}|\eeta{1}{j}\cap H_N|\big|>\sfrac{\delta}3\right)&\leq\sfrac{3}{\delta}\ts\E\!\left(\big|\rrho{1}{j}-\sfrac{1}{N}|\eeta{1}{j}\cap H_N|\big|\right)\leq\tfrac3{N\delta}\E(\GN\!\setminus\! H_N)\\
&\leq C\tts N^{-3/5}\leq C\tts e^{-\alp{j}L_N},
\end{align*}
for some $C>0$, where we have used \eqref{eq:fromL32DR}.
The second term is similarly bounded by $\sfrac{3}{\delta}\ts\E\!\left(\big|\sfrac{1}{N}|\eeta{1}{j}\cap H_N|-\brho{1}{j}\big|\right)$, for which the estimate follows from \eqref{ineq:2} in \Cref{midstep}.

We turn now to the third term on the right hand side of \eqref{eq:princ2}.
It will be convenient to condition on the realization of $\GN$: introducing the notations $\P_{\GN}=\P(\cdot|\GN)$ and $\E_{\GN}=\E(\cdot|\GN)$ we may estimate this term as
\begin{multline}\label{mid2step}
\P\!\left(|\brho{1}{j}-\hh{}{j}(\rrho{0})|>\sfrac{\delta}3\right)\\
\qquad\leq\E\!\left(\P_{\GN}\!\left(|\brho{1}{j}-\E_{\GN}(\brho{1}{j})|>\sfrac{\delta}6\right)\right)+\E\!\left(\P_{\GN}\!\left(|\E_{\GN}(\brho{1}{j})-\hh{}{j}(\rrho{0})|>\sfrac{\delta}6\right)\right).
\end{multline}
From \eqref{ineq:4} in \Cref{midstep} with $\varepsilon=\delta/12$ we can estimate the second term for large $N$ as
\[\P\tsm\left(|\E_{\GN}(\brho{1}{j})-\hh{}{j}(\rrho{0})|>\sfrac{\delta}6\right)\leq\P\tts(\sfrac{1}{N}|\GN\tsm\setminus\tsm H_N|>\sfrac{\delta}{12})\leq\sfrac{12}{\delta}\ts\E(\sfrac{1}{N}|\GN\tsm\setminus\tsm H_N|),\]
which is bounded by $C\tts e^{-\alpha_N(j)L_N}$ as above.
So what remains is to bound the first term on the right hand side of \eqref{mid2step}.
We focus on the inner conditional probability, which is bounded by 
\[\sfrac{36}{\delta^2}\ts\E_{\GN}\!\left((\brho{1}{j}-\E_{\GN}(\brho{1}{j}))^2\right).\]
Setting $r_j(x)=\E_{\GN}(\beeta{1}{j}(x))$, we write
\begin{equation*}
\E_{\GN}\!\left((\brho{1}{j}-\E(\brho{1}{j}))^2\right)
=N^{-2}\tts\E_{\GN}\!\left({\textstyle\sum_{x,y\in\GN}}(\uno{x\in\eeta{1}{j}}-r_j(x))(\uno{y\in\eeta{1}{j}}-r_j(y))\right),
\end{equation*}
Since the events $\{x\in\beeta{1}{j}\}$ and $\{y\in\beeta{1}{j}\}$ are independent for $x,y\in H_N$ with $d(x,y)>2L_N$, we may bound the right hand side by
\[N^{-2}\tts\E_{\GN}\big(\left|\left\{(x,y)\in H_N\times H_N,\;d(x,y)\leq 2L_N\right\}\right|\big)+N^{-2}\E_{\GN}(|\GN\times\GN\setminus H_N\times H_N|);\]
the first term is bounded by $N^{-2}\E_{\GN}(\sum_{x\in G_N}\left|B(x,2L_N)\right|)=N^{-2}\tts(3N\cdot N^{2/5})=3N^{-3/5}$, while the second one is bounded by $2N^{-2}\E_{\GN}(|\GN\setminus H_N|)$.
Taking expectation, we see that the first term on the right hand side of \eqref{mid2step} is bounded by
\[2N^{-3/5}+2N^{-2}\E(|\GN\setminus H_N|)\leq C\tts e^{-\alpha_N(j)L_N}\]
as needed, finishing the proof.
\end{proof}

\section{Proof of the main result}\label{multitypeproofs}

\subsection{Interior-recurrent sets}

As discussed at the end of Section~\ref{conMMM}, our approach to prove \Cref{theo:5} consists in using \Cref{teo1paso} to show that the particle system tracks the behavior observed for the dynamical system in Theorem \ref{teodinamico}. 
However, if \Cref{teo1paso} is applied directly to try to handle the stochastic system for a number of steps which depends on $N$, one loses control on the constant $C$ appearing in the estimate (and in fact we expect it to grow fast with $N$ due to the chaotic behavior of the dynamical system).
In order to fix this problem we introduce the notion of interior-recurrent sets, which are in essence subsets of the state space that are visited by the dynamical system repeatedly in a bounded number of steps, and which we will use to divide the trajectories of the stochastic system into excursions between hitting times, so that \Cref{teo1paso} can be used on each individual excursion.

\begin{definition}\label{inttrap}
	We say that a set $A\subseteq [0,1]^2$ is {\sl interior-recurrent} for $\DS(h)$ if there are $0<\delta'<\delta$ and $\bar{k}\in\N$ such that
	\begin{enumerate}[label=(\roman*)]
		\setlength\itemsep{0.3em}
		\item $\forall\tts p\in A$, $d(p,A^c)>\delta\;\Longrightarrow\;d(\hh(p),A^c)\geq\delta'$,\label{inttrap1}
		\item $\forall\tts p\in A$, $d(p,A^c)\leq \delta\;\Longrightarrow\;d(\hh{k}(p),A^c)\geq\delta'$ for some $k\leq\bar{k}$.\label{inttrap2}
	\end{enumerate}
\end{definition}

\noindent In words, a set $A$ is interior-recurrent if the dynamical system cannot exit its interior using jumps larger than a certain size $\delta$ and if every time it gets to a distance smaller than $\delta$ to the boundary, it takes a bounded number of steps for it to go back to a certain subset of $A$ which is bounded away from its boundary. 
The next proposition shows that, thanks to the approximation result Theorem \ref{teo1paso}, the control on $\DS(h)$ furnished by interior-recurrent sets can be transferred to the particle system.

\begin{prop}\label{trapmmm}
	Let $(\eeta{k})_{k\in\N}$ be the {MMM} with parameters satisfying the conditions in \Cref{teo1paso}, and assume that its initial condition $\rrho{0}$ lies within an interior-recurrent set $A$ with parameters $\delta$, $\delta'$ and $\bar{k}$. Then there is a $C>0$ depending only on $\delta'$ and $\bar{k}$, such that
	\begin{equation}
	\label{eq4theo5}
	\P\!\left(\rrho{k}\notin A,\;\forall k\in\{1,2,\ldots,\bar{k}\}\right)\;\leq\;Ce^{-\ualpha_N\log_2(N)/5}.
	\end{equation}
\end{prop}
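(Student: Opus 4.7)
The plan is to combine the approximation provided by \Cref{teo1paso} with the geometric information encoded in the definition of interior-recurrent set: the theorem guarantees that with high probability the densities $(\rrho{k})_{k\leq\bar{k}}$ track the deterministic orbit $(\hh{k}{}(\rrho{0}))_{k\leq\bar{k}}$ closely, while the interior-recurrent property of $A$ guarantees that this orbit visits a region well inside $A$ within at most $\bar{k}$ steps. Together, these two facts force $\rrho{k}$ to lie in $A$ at some time $k\leq\bar{k}$ with high probability.

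To implement this, I would first extract from \Cref{inttrap} a (random) time $k^*=k^*(\rrho{0})\in\{1,\dotsc,\bar{k}\}$ such that $d(\hh{k^*}{}(\rrho{0}),A^c)\geq\delta'$. This follows directly by splitting into the two cases of the definition: if $d(\rrho{0},A^c)>\delta$ then part \ref{inttrap1} gives $k^*=1$, and otherwise part \ref{inttrap2} provides the required $k^*\leq\bar{k}$.

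The second step is to apply \Cref{teo1paso} with tolerance $\delta'/2$, separately for each $k=1,\dotsc,\bar{k}$, and then take a union bound to get
\[
\P\!\left(\big\|\rrho{k}-\hh{k}{}(\rrho{0})\big\|>\delta'/2\text{ for some }k\in\{1,\dotsc,\bar{k}\}\right)\,\leq\,Ce^{-\ualpha_N\log_2(N)/5},
\]
for some constant $C$ depending only on $\delta'$ and $\bar{k}$; here it is crucial that the constant in \Cref{teo1paso} is uniform in the initial condition. On the complementary event one has in particular $\|\rrho{k^*}-\hh{k^*}{}(\rrho{0})\|\leq\delta'/2$, so by the triangle inequality $d(\rrho{k^*},A^c)\geq\delta'-\delta'/2=\delta'/2>0$, and hence $\rrho{k^*}\in A$. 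Since $k^*\leq\bar{k}$, this excludes the event $\{\rrho{k}\notin A,\,\forall k\in\{1,\dotsc,\bar{k}\}\}$, yielding the stated bound.

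I do not anticipate any serious obstacle: the argument is essentially a direct application of \Cref{teo1paso} over a bounded number of steps, with the interior-recurrent property providing exactly the deterministic input that the approximation needs in order to reach a conclusion about the stochastic process. The only mild subtlety is that the time $k^*$ depends on the random initial condition $\rrho{0}$, which is why a union bound over $k\in\{1,\dotsc,\bar{k}\}$ (rather than a single application of \Cref{teo1paso} at a fixed $k$) is used; the uniformity of the constant from \Cref{teo1paso} in the initial condition handles this cleanly.
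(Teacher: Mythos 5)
Your proposal is correct and takes essentially the same route as the paper: extract a deterministic time $k^*\leq\bar{k}$ at which the orbit $\hh{k^*}{}(\rrho{0})$ is at distance at least $\delta'$ from $A^c$ (splitting on whether $d(\rrho{0},A^c)>\delta$ or not), then use \Cref{teo1paso} and the uniformity of its constant to conclude that $\rrho{k^*}\in A$ with high probability. The paper conditions on $\rrho{0}$ and applies \Cref{teo1paso} once at time $k^*$ with tolerance $\delta'$, while you take a union bound over $k\leq\bar{k}$ with tolerance $\delta'/2$; this is a minor stylistic difference that only changes the constant $C$, and your version is if anything slightly more careful with the triangle-inequality step.
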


\begin{proof}
	From \Cref{teo1paso} there is a $C>0$ as in the statement such that for any $k\leq\bar{k}$ and $\rrho{0}$, we have
	\begin{equation*}
	\P\!\left(\big\|\rrho{k}-\hh{k}{}(\rrho{0})\big\|>\delta'\right)\;\leq\;Ce^{-\ualpha_N\log_2(N)/5}.
	\end{equation*}
  Now we use the interior-recurrence of $A$. If $d(\rrho{0},A^c)>\delta$ then $d(\hh(\rrho{0}),A^c)>\delta'$, so the left hand side of \eqref{eq4theo5} is bounded by
	\[\P\big(\rrho{1}\notin A\big)\;\leq\;\P\big(\|\rrho{1}-\hh(\rrho{0})\|>\delta'\big)\;\leq\;Ce^{-\ualpha_N\log_2(N)/5}.\]
  Otherwise, if $d(\rrho{0},A^c)\leq\delta$, then there is a $k\leq\bar{k}$ such that $d(\hh{k}{}(\rrho{0}),A^c)>\delta'$, and the same argument shows that the left hand side of \eqref{eq4theo5} is bounded by the required amount.
\end{proof}

With the concept of interior-recurrent sets in hand, we can now state the more precise version of \Cref{teodinamico}, which gives stronger versions of coexistence and domination for $\DS(h)$ and which, together with \Cref{trapmmm}, will yield \Cref{theo:5}. In order to state it we define $\pb\in[0,1]^2$ as the vector of maximum possible densities achieved after the epidemic stage, that is
\begin{equation}\label{barp}\pb{i}=\sup\nolimits_{x\in[0,1]}\gg{i}(x).\end{equation}

\begin{theo}\label{extcoex}
	Let $\mathcal{F}_1$ and $\mathcal{F}_2$ be as in \Cref{teodinamico} and consider the dynamical system $\DS(h)$ with $1<\pphi{1}<\pphi{2}$.
	\begin{enumerate}[label=\textup{(\roman*)}]
		\item \textup{(Survival)} There are $\bar{c}$ and $\varepsilon>0$ such that for any $0<c<\bar{c}$ the set $[0,1]\times[c,\pb{2}+\varepsilon]$ is interior-recurrent.
		\item \textup{(Coexistence)} Assume that $\pphi{1}>\mathcal{F}_1(\alp{2},\pphi{2})$ and $\pphi{2}>2\log 2$.
		Then there are $\bar{c}_1,\bar{c}_2>0$ and $\varepsilon_1,\varepsilon_2>0$ such that for any $c_1\leq\bar{c}_1$ and $c_2\leq\bar{c}_2$, the set $A=[c_1,\pb{1}+\varepsilon_1]\times[c_2,\pb{2}+\varepsilon_2]$ is interior-recurrent.
		\item \textup{(Domination)} Assume that $\pphi{1}<\mathcal{F}_2(\alp{2},\pphi{2})$. Then, there are $\gamma_1,\gamma_2\in(0,1)$ and an interior-recurrent set $B$ with parameter $\bar{k}=1$ such that for all $p\in B$
		\begin{equation}\label{condB}(1-\alp{1})\ff{1}(\pp)\leq \gamma_1\pp{1}\quad\text{and}\quad\gamma_2<\pp{2}.\end{equation}
		Furthermore, for any $l_2>0$ there is a $k'\in\N$ such that for any $\pp{}{0}$ satisfying $l_2<\pp{2}{0}$, there is $k\leq k'$ for which $\hh{k}{}(\pp{}{0})$ is an interior point of $B$.
	\end{enumerate} 
\end{theo}

Before turning to the proofs of Theorems \ref{theo:5} and \ref{extcoex}, we show how the existence of the recurring sets described above implies the coexistence and domination behaviors of the system as given in \Cref{teodinamico}:

\begin{proof}[Proof of \Cref{teodinamico}]
	Since after one iteration the dynamical system is upper bounded by $\pb$ we will assume that $\pp{}{0}$ also satisfies this bound. Under the coexistence assumptions, \Cref{extcoex} states that there is a compact interior-recurrent set $A\subseteq(0,1)^2$ containing $\pp{}{0}$, and by definition this implies that the orbit of $\DS(h)$ is contained in $A_{\bar{k}}\colonequals\cup_{l=0}^{\bar{k}}\hh{l}{}(A)$, which is also compact. Since $A_{\bar{k}}\subseteq(0,1)^2$ (otherwise it would contain an orbit that never returns to $A$) we deduce that $\liminf_{k\to\infty}\hh{k}{i}(p)>0$ for $i=1,2$. The same argument with $[0,1]\times[c,1]$ instead of $A$ gives survival of type $2$.

  Under the domination assumptions, \Cref{extcoex} states that the orbit of $\DS(h)$ eventually reaches an interior-recurrent set $B$ with parameter $\bar{k}=1$, which satisfies \eqref{condB} for some $\gamma_1,\gamma_2\in(0,1)$, and since $\bar{k}=1$ the system never leaves the set $B$. Since $\gamma_2<\pp{2}$ for $p\in B$, we deduce that $\liminf_{k\to\infty}\hh{k}{2}(p)\geq\gamma_2>0$; similarly, since $\hh{}{1}(\pp)\leq(1-\alp{1})\ff{1}(\pp{1})\leq\gamma_1\pp{1}$ for $p\in B$ (the first inequality follows from comparing with a system where we let the epidemic attack but not spread), we deduce that $\lim_{k\to\infty}\hh{k}{1}(p)=0$. This shows that type 2 dominates.
\end{proof}

\begin{proof}[Proof of \Cref{theo:5}]
	Consider the two-type {MMM} under the conditions of \Cref{theo:5intro} and observe that the hypothesis $l_1<\rrho{0}{1}<u_1$ and $l_2<\rrho{0}{1}<u_2$ imply that
	\[l_1'<\hh{}{1}(\rrho{0})\leq\pb{1}\quad\text{and}\quad l_2'<\hh{}{2}(\rrho{0})\leq\pb{2}\]
	for some $l_1'$ and $l_2'$ depending on $l_1,l_2,u_1$, and $u_2$. In particular, using \Cref{teo1paso} we can safely assume that $l_1'<\rrho{0}\leq\pb{1}+\varepsilon_1$ and $l_1'<\rrho{0}\leq\pb{2}+\varepsilon_2$ for sufficiently small $\varepsilon_1$ and $\varepsilon_2$. Assume first that the parameters of the model satisfy the coexistence conditions of \Cref{extcoex}. As in the previous proof, these conditions ensure that the set $[c_1,\pb{1}]\times[c_2,\pb{2}]$ is interior-recurrent for sufficiently small $c_1$ and $c_2$. In particular we can choose $c_1<l_1$ and $c_2<l_2$ and hence it contains any initial condition $\rrho{0}$. Let $\sigma_n$ denote the $n$-th return time of the dynamical system to $A$. By \Cref{trapmmm} we have $\P(\sigma_1>\bar{k})\leq Ce^{-\ualpha_N\log_2(N)/5}$ for some $C>0$ which is independent of the initial condition. By the strong Markov property we get
	\[\P\big(\sigma_1\leq\bar{k},\;\sigma_2-\sigma_1\leq\bar{k},\ldots,\;\sigma_{n}-\sigma_{n-1}\leq\bar{k}\big)\;\geq\;(1-Ce^{-\ualpha_N\log_2(N)/5})^n.\]
  The event on the left hand side implies in particular that $\sigma_n<\infty$ a.s., but since $\sigma_n\geq n$ it follows that $\rrho{k}\in [c_1,\pb{1}]\times[c_2,\pb{2}]$ for some $k\geq n$. Since both species have to be alive to lie within this set, on this event both $\tt{1}$ and $\tt{2}$ must be larger than $n$, so \eqref{esperanzacoex} follows by choosing $n=e^{c_1\ts\ualpha_N\tsm\log(N)}$ for some $c_1<\frac{1}{5}$. For the general case $\pphi{1}<\pphi{2}$ we can use the same argument with $[0,1]\times[c,1]$ replacing $[c_1,\pb{1}]\times[c_2,\pb{2}]$, giving \eqref{esperanzadom2}.
	
	Suppose now that the parameters satisfy the domination conditions of \Cref{extcoex} so that there is an interior-recurrent set $B$ with parameter $\bar{k}=1$ which satisfies \eqref{condB}. Assume first that $\rrho{0}$ lies in the interior of $B$. We will explain later how to treat the case in which the initial condition is not in the interior of $B$.
	
	Since $\bar{k}=1$, \Cref{inttrap} implies that regardless of the value of $d(\rrho{0},B^c)$ we have $d(\hh(\rrho{0}),B^c)>\delta'$, so \Cref{teo1paso} gives some $C>0$ depending only on $B$ such that $\P(\rrho{1}\notin B)\,\leq\,\P\!\left(\delta'<\big\|\rrho{1}-\hh(\rrho{0})\big\|\right)\,\leq\,Ce^{-\ualpha_N\log_2(N)/5}$. Since the bound is uniform over $\rrho{0}\in B$, an application of the strong Markov property gives that, for any $n\in\N$,
	\begin{equation}\label{ineq1}\P(\rrho{k}\in B~\forall k\leq n)\;\geq\;(1-Ce^{-\ualpha_N\log_2(N)/5})^n.
	\end{equation}
	Noticing that $\gamma_2<\pp{2}$ for all $p\in B$ we deduce that the event on the left hand side implies $\tt{2}\geq n$, so \eqref{esperanzadom2} follows by choosing $n=e^{c_1\ts\ualpha_N\tsm\log(N)}$ for some $c_1<\frac{1}{5}$ as in the coexistence scenario.
	
	To deduce \eqref{esperanzadom1} observe that under the assumption $\rrho{0}\in B$ the number of type 1 individuals at time 1 is dominated by a Poisson random variable with parameter $(1-\alp{1}{N})\ff{1}(\rrho{0})$, which is less than $\gamma_1\rrho{0}$. From this one sees that on the event $\mathcal{E}_n=\{\rrho{k}\in B~\forall k\leq n\}$, the process $(N\rrho{k})_{k\leq n}$ is stochastically dominated by a subcritical Galton-Watson process starting with $\rrho{0}N$ individuals and with offspring distribution Poisson$[\gamma_1]$. By \eqref{ineq1} and standard branching processes results we get
	\begin{equation}\label{ineq2}
	\P\left(\tt{1}\geq n\right)\leq\P\left(\mathcal{E}_n\cap\{\tt{1}> n\}\right)+\P(\mathcal{E}_n^c)
	\leq \rrho{0}N\gamma_1^{n}+1-(1-Ce^{-\ualpha_N\log_2(N)/5})^n
	\end{equation}
  and then \eqref{esperanzadom1} follows by taking $n=c_1'\log N$ for some small $c_1'>0$. 
	
	Suppose now that $\rrho{0}$ is not an interior point of $B$ and observe that under the conditions of \Cref{theo:5intro} there is some $l_2$ such that $l_2<\rrho{0}{2}$ and hence from \Cref{extcoex} there is some $k'\in\N$ depending only on $l_2$ such that $\hh{k}(\rrho{0})$ is an interior point of $B$ for some $k\leq k'$. Observing that the set $[0,1]\times[l_2,\pb{2}]$ is compact and the function $\hh{k}$ is continuous for all $k\leq k'$ there is $\varepsilon$ such that $d(\hh{k}(p_0),B^c)>\varepsilon$ for any $k\leq k'$ and $p_0\in [0,1]\times[l_2,\pb{2}]$. Using \Cref{teo1paso} there is a $\bar{C}$ depending on $k'$ and $\varepsilon$ such that, for the particular value of $k$ such that $\hh{k}(\rrho{0})\in B$,
	\begin{equation}\label{ineq3}\P(\rrho{k}\notin B)\;\leq\;\P\big(\big\|\rrho{k}-\hh{k}(\rrho{0})\big\|>\varepsilon/2\big)\;\leq\;\bar{C}e^{-\ualpha_N\log_2(N)/5},\end{equation}
	so the general proof of \eqref{esperanzadom1} and \eqref{esperanzadom2} follows from restricting to the event on the left hand side above and restarting the process at time $k$.
\end{proof}

The rest of this section is devoted to the proof of \Cref{extcoex}, which is rather long and technical, so we divide it into three parts. In \Cref{preliminaries} we present some preliminary notation and functions which will be used to facilitate the analysis of the trajectories of $\DS(h)$, as well as some technical results about them. Using these results we prove the coexistence part of the theorem in \Cref{coex}, and the domination part in \Cref{ext}.

\subsection{Preliminaries}
\label{preliminaries}

We begin this section by decomposing the function $\gg{}$, $\alpha\in[0,1)$, as
\begin{equation}
\gg{}(x)=(1-\alp)x \GG{}(x)^3\qquad\text{with}\quad \GG{}(x)=\frac{1-\sqrt{1-4(1-\alpha)x(1-x)}}{2(1-\alpha)x}.\label{eq:Galpha}
\end{equation}

\begin{lemma}
	\label{tecnico1}
	The function $\GG{}:[0,1]\to[0,1]$ satisfies the following:
	\begin{enumerate}[label=\textup{(\arabic*)}]
		\item For $\alpha=0$ it is given as $G_0(x)=1$ for $x\leq 1/2$ and $\sfrac{1-x}{x}$ if $x>1/2$.
		\item It is decreasing as a function of both $\alpha$ and $x$, with $\GG{}(0)=1$ and $\GG_\alpha(1)=0$ for all $\alpha \in [0,1)$.
		\item As $\alp\to1$, it converges monotonically to $G_1(x)\coloneqq1-x$.		
	\end{enumerate}
\end{lemma}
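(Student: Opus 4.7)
First I would multiply numerator and denominator of the defining expression by the conjugate of the radical to obtain the equivalent form
\[
G_\alpha(x) \;=\; \frac{2(1-x)}{1+\sqrt{1-4(1-\alpha)x(1-x)}}.
\]
This reformulation removes the $0/0$ singularities at $x=0$ and makes every part of the lemma accessible. The boundary values $G_\alpha(0)=1$ and $G_\alpha(1)=0$ follow by direct substitution, since the radical equals $1$ at both endpoints. Claim (1) is then obtained by setting $\alpha=0$, simplifying $\sqrt{1-4x(1-x)}=|1-2x|$, and splitting into the cases $x\leq 1/2$ and $x>1/2$.

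Next I would handle the monotonicity in $\alpha$ claimed in (2). In the rewritten expression the numerator $2(1-x)$ does not depend on $\alpha$, while the radicand $1-4(1-\alpha)x(1-x)$ is non-decreasing in $\alpha$ for $x\in[0,1]$; hence the denominator is non-decreasing and $G_\alpha(x)$ is non-increasing in $\alpha$. This immediately yields claim (3): the monotone pointwise limit as $\alpha\to1^-$ is read off from the rewritten form, since the radical tends to $1$ and the quotient tends to $2(1-x)/2=1-x$.

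The main technical obstacle is the monotonicity in $x$. Setting $a=1-\alpha$ and $s(x)=\sqrt{1-4a\,x(1-x)}$, logarithmic differentiation reduces the claim $G_\alpha'(x)\leq 0$ to the inequality
\[
2a(1-2x)(1-x) \;\leq\; s(1+s).
\]
When $x\geq 1/2$ the left hand side is non-positive and there is nothing to prove. For $x<1/2$ I would use the identity $s^2=1-4ax(1-x)$ to rewrite $s(1+s)=s+1-4ax(1-x)$, reducing the inequality to $s\geq 2a(1-x)-1$. If $2a(1-x)\leq 1$ this is immediate; otherwise I would square both sides and rely on the key identity
\[
s^2 - \bigl(2a(1-x)-1\bigr)^2 \;=\; 4\alpha(1-\alpha)(1-x)^2 \;\geq\; 0,
\]
obtained by a direct (if slightly deceptive) expansion after the substitution $y=1-x$. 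This identity is the crux of the proof: it gives strict monotonicity when $\alpha>0$, and for $\alpha=0$ it degenerates to equality on $[0,1/2]$, consistent with the constant-$1$ branch of $G_0$ already identified in (1).
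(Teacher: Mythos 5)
The paper omits the proof of this lemma entirely (``We omit the simple proof of this result''), so there is no authorial argument to compare against; your task was therefore to supply the verification, and your proof is correct. The conjugate rewriting
\[
G_\alpha(x)=\frac{2(1-x)}{1+\sqrt{1-4(1-\alpha)x(1-x)}}
\]
is the natural move, and your handling of parts (1), (2-in-$\alpha$) and (3) follows immediately from it. For the monotonicity in $x$, I checked the reduction: logarithmic differentiation together with $s'=2a(2x-1)/s$ does give the equivalent inequality $2a(1-2x)(1-x)\leq s(1+s)$; for $x<1/2$ the simplification $s(1+s)=s+1-4ax(1-x)$ reduces this to $s\geq 2a(1-x)-1$, and the key identity $s^2-(2a(1-x)-1)^2=4\alpha(1-\alpha)(1-x)^2$ checks out (with $y=1-x$, both sides equal $4a(1-a)y^2$). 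The observation about degeneracy at $\alpha=0$ correctly matches the constant branch of $G_0$ on $[0,1/2]$, which is why ``decreasing'' here must be read as weakly decreasing. The proof is complete and correct.
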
 

We omit the simple proof of this result.
Recall that $\pb\in[0,1]^2$ was defined as
\begin{equation*}\pb{i}=\sup\nolimits_{x\in[0,1]}\gg{i}(x).\end{equation*}

By (2) of the last lemma $\gg{}\leq (1-\alp)g_0$, from which it follows that $\pb{i}\leq\frac{1-\alp{i}}{2}$.
By definition, except maybe for the initial value $\pp{}{0}$, the orbit of $\DS(h)$ lies within $[0,\pb{}]$; the next result provides control on the behavior of $\gg{}$ on that interval:

\begin{prop}
	\label{maxg}
	$\gg{}$ attains its global maximum at a single value $x_0\in[0,1/2]$. This value is characterized as the solution of $\GG{}(x_0)=x_0+\sfrac{1}{2}$ and satisfies:
	\begin{enumerate}[label=\textup{(\arabic*)}]
		\item If $\alpha>0$, $x_0$ is the only critical point of $g_{\alpha}$ in $[0,1]$.
		\item If $\pphi{i}<\betac$, then for any $\pp$ with $\pp{i}\leq \pb{i}$ we have $\ff{i}(\pp)<x_0$. In particular, $\gg{i}'\circ\ff{i}(\pp)>0$ for all $\pp\in[0,\pb{1}]\times[0,\pb{2}]$. 
	\end{enumerate}
\end{prop}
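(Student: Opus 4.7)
My approach is to work with the decomposition $g_\alpha(x) = (1-\alpha) x G_\alpha(x)^3$ from \eqref{eq:Galpha}. Differentiating gives $g_\alpha'(x) = (1-\alpha) G_\alpha(x)^2 \bigl[G_\alpha(x) + 3x G_\alpha'(x)\bigr]$, and since $G_\alpha > 0$ on $[0,1)$ by \Cref{tecnico1} (and $g_\alpha$ itself vanishes at $x=1$), any interior critical point satisfies $G_\alpha(x) + 3x G_\alpha'(x) = 0$. Introducing $u(x) = \sqrt{1 - 4(1-\alpha)x(1-x)}$, so that $G_\alpha(x) = (1-u)/(2(1-\alpha)x)$ and $u'(x) = -2(1-\alpha)(1-2x)/u$, a short algebraic manipulation of the critical point equation (using the identity $(1-u)(1+u) = 4(1-\alpha)x(1-x)$ to eliminate the factor $(1-\alpha)x$) yields first $u = 3(1-2x)/(1+2x)$ and then, substituting back into the definition of $G_\alpha$, the clean characterization $G_\alpha(x_0) = x_0 + 1/2$.

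Uniqueness is immediate from \Cref{tecnico1}: $x \mapsto G_\alpha(x)$ is strictly decreasing from $G_\alpha(0) = 1$ to $G_\alpha(1) = 0$, while $x \mapsto x + 1/2$ is strictly increasing from $1/2$ to $3/2$, so the two curves intersect at exactly one point $x_0 \in (0,1)$. Evaluating at $x = 1/2$ gives $G_\alpha(1/2) = (1-\sqrt{\alpha})/(1-\alpha) = 1/(1+\sqrt{\alpha}) \leq 1 = 1/2 + 1/2$, which combined with monotonicity forces $x_0 \leq 1/2$. Since $g_\alpha \geq 0$ on $[0,1]$ vanishes at both endpoints and $g_\alpha'(0^+) = 1-\alpha > 0$, the global maximum of $g_\alpha$ is positive and must be attained at an interior critical point, which by uniqueness is $x_0$; this simultaneously proves item~(1) for $\alpha > 0$.

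For item~(2), the key estimate is
\[
\ff{i}(p) = \frac{1 - e^{-S}}{S}\,\bet{i}\tts p_i \;\leq\; 1 - e^{-\bet{i} p_i} \;\leq\; 1 - e^{-\bet{i}\tts\pb{i}},
\]
where $S = \sum_j \bet{j} p_j \geq \bet{i} p_i$ and the first inequality uses that $s \mapsto (1-e^{-s})/s$ is decreasing. Thus $\ff{i}(p) < x_0$ reduces to $\bet{i}\tts \pb{i} < -\log(1 - x_0)$, and since the characterization gives $\pb{i} = g_{\alp{i}}(x_0) = (1-\alp{i}) x_0 (x_0 + 1/2)^3$, this in turn becomes $\pphi{i}\, x_0(x_0 + 1/2)^3 < -\log(1-x_0)$. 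Under $\pphi{i} < \betac$ it is enough to prove the one-variable inequality
\[
2\log 2 \cdot x(x + 1/2)^3 \;\leq\; -\log(1 - x), \qquad x \in (0, 1/2],
\]
with equality only at $x = 1/2$. I plan to establish this by setting $q(x) \coloneqq -\log(1-x) - 2\log 2 \cdot x(x+1/2)^3$, noting $q(0) = q(1/2) = 0$, and showing that $q$ is strictly concave on $[0, 1/2]$; concavity reduces to $1/(1-x)^2 < 6\log 2 \cdot (x+1/2)(4x+1)$ throughout $[0, 1/2]$, which can be checked via a standard calculus argument (the difference has positive values at both endpoints and a second derivative that changes sign only once, forcing the first derivative to remain positive). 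The ``in particular'' statement then follows since $g_{\alp{i}}$ is increasing on $[0, x_0]$ by uniqueness of the critical point.

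The main obstacle I anticipate is the analytic inequality above: since it is tight at $x = 1/2$ (the $\alpha = 0$ case), elementary bounds on $-\log(1-x)$ such as $x$ or $x + x^2/2$ are too weak near that endpoint, and one is forced into the concavity-based second-derivative analysis. The algebraic reduction of the critical point equation to $G_\alpha(x_0) = x_0 + 1/2$ is also somewhat delicate, requiring careful bookkeeping between the quantities $u$, $1-u$, and $(1-\alpha)x$ and the repeated use of $(1-u)(1+u) = 4(1-\alpha)x(1-x)$.
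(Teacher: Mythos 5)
Your proposal is correct and follows the same overall skeleton as the paper: factor $g_\alpha'(x) = (1-\alpha) G_\alpha(x)^2\bigl[G_\alpha(x) + 3x\tts G_\alpha'(x)\bigr]$, reduce the critical-point equation to $G_\alpha(x_0) = x_0 + \frac12$, and for item (2) use $f_\beta^{(i)}(p) \leq 1-e^{-\beta_i \pb{i}}$ together with $\pb{i} = g_{\alp{i}}(x_0) = (1-\alp{i})x_0(x_0+\frac12)^3$ to boil everything down to a one-variable inequality on $[0,1/2]$. There are, however, two genuine differences in how the calculus is discharged, both of which streamline the argument. For uniqueness of $x_0$, the paper computes $g_\alpha''$ at critical points and shows it is negative (every critical point is a local max, hence there is only one), which requires a nontrivial second-derivative computation for $G_\alpha$; you instead invoke the strict monotonicity of $G_\alpha$ from \Cref{tecnico1} directly, so that $G_\alpha(x)=x+\frac12$ trivially has a unique solution, and the evaluation $G_\alpha(\frac12) = 1/(1+\sqrt{\alpha}) \leq 1$ places it in $[0,\frac12]$. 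This is cleaner and saves the $G_\alpha''$ bookkeeping. For item (2), the paper keeps the parameter $\pphi{i}$ in the auxiliary function $V(x) = \pphi{i}\ts x(x+\frac12)^3 + \log(1-x)$ and shows that $V$ has only local minima at its interior critical points, then concludes from the endpoint values $V(0)=0$, $V(\frac12)<0$; you instead exploit monotonicity in $\pphi{i}$ to reduce to the extremal, parameter-free inequality at $\pphi{i}=2\log 2$ and prove it by concavity of $q$. Both are valid and of comparable difficulty, though your parameter-free reduction is a nice touch; just make explicit that the strictness of $\pphi{i}<2\log 2$ supplies the needed slack at $x=\frac12$, where your $q$ vanishes (this covers the edge case $\alp{i}=0$, $x_0=\frac12$, which the paper handles via $V(\frac12)=\pphi{i}/2-\log 2<0$).
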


Even though $\gg{}$ is not monotone, the last result still yields enough information about the growth of $\hh$:

\begin{prop}
	\label{crecimiento}
	For each $i=1,2$ define $\ll{i}:[0,1]^2\rightarrow\R^+$ as $\ll{i}(p)=\hh{}{i}(p)/\pp{i}$. Then:
	\begin{enumerate}[label=\textup{(\arabic*)}]
		\item The function $\ff{1}(p)$ is increasing in $\pp{1}$ and decreasing in $\pp{2}$.
		\item The function $\ll{1}(p)$ is decreasing in $\pp{1}$.
		\item If $\pphi{1}<\betac$, then  $\hh{}{1}(p)$ is increasing in $\pp{1}$ and decreasing in $\pp{2}$. In particular, in this case $\ll{1}$ is also decreasing in $\pp{2}$.
	\end{enumerate}
\end{prop}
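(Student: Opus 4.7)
The plan is to reduce everything to a single elementary monotonicity fact: the function $\psi(s) := (1-e^{-s})/s$ is positive and strictly decreasing on $(0,\infty)$, as is transparent from the integral representation $\psi(s) = \int_0^1 e^{-ts}\,dt$. Setting $s := \bet{1}\pp{1} + \bet{2}\pp{2}$, the growth coordinate factors as $\ff{1}(p) = \bet{1}\pp{1}\,\psi(s)$, and this single identity reduces each coordinate-wise derivative below to essentially a one-line check.

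For part (1), I would split $\ff{1}(p) = (1 - e^{-s})\cdot\frac{\bet{1}\pp{1}}{\bet{1}\pp{1}+\bet{2}\pp{2}}$: raising $\pp{2}$ with $\pp{1}$ fixed only increases $s$, so $\ff{1} = \bet{1}\pp{1}\,\psi(s)$ strictly decreases, while raising $\pp{1}$ with $\pp{2}$ fixed increases both $(1-e^{-s})$ and the type-1 fraction, so their product increases. For part (2), I use the factorization $\gg{}(x) = (1-\alpha)x\,\GG{}(x)^3$ from \eqref{eq:Galpha} to rewrite
\[\ll{1}(p) \;=\; \frac{\gg{1}(\ff{1}(p))}{\pp{1}} \;=\; (1-\alp{1})\,\bet{1}\,\psi(s)\,\GG{1}(\ff{1}(p))^3.\]
Both nonnegative factors on the right are decreasing in $\pp{1}$: the factor $\psi(s)$ directly, since $\psi$ is decreasing and $s$ increases with $\pp{1}$; and $\GG{1}(\ff{1}(p))^3$ because $\GG{1}$ is decreasing by Lemma \ref{tecnico1}(2) while $\ff{1}$ is increasing in $\pp{1}$ by part (1). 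A product of nonnegative decreasing functions is decreasing, giving (2).

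For part (3) the chain rule gives $\partial_{\pp{j}}\hh{}{1}(p) = \gg{1}'(\ff{1}(p))\cdot\partial_{\pp{j}}\ff{1}(p)$ for $j=1,2$. The hypothesis $\pphi{1} < \betac$ combined with Proposition \ref{maxg}(2) places $\ff{1}(p)$ strictly to the left of the maximizer $x_0$ of $\gg{1}$, so the scalar factor $\gg{1}'(\ff{1}(p))$ is nonnegative on the relevant box $[0,\pb{1}]\times[0,\pb{2}]$. The sign of $\partial_{\pp{j}}\hh{}{1}$ then matches the sign of $\partial_{\pp{j}}\ff{1}$ already determined in part (1), yielding the two stated monotonicities of $\hh{}{1}$. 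The final clause is immediate since $\ll{1}(p) = \hh{}{1}(p)/\pp{1}$ and $\pp{1}$ does not depend on $\pp{2}$.

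I do not expect any real obstacle: the only subtlety is that $\gg{1}$ itself is non-monotone, which is precisely why part (3) needs the fitness assumption $\pphi{1}<\betac$ to confine $\ff{1}(p)$ to the increasing branch of $\gg{1}$, and that point has already been absorbed into Proposition \ref{maxg}.
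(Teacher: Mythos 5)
Your proposal is correct and follows essentially the same route as the paper: the same two factorizations of $\ff{1}$ via $\psi(\Sigma_p)$ for part (1), the same rewriting $\ll{1}=\pphi{1}\psi(\Sigma_p)\,\GG{1}^3\circ\ff{1}$ combined with Lemma \ref{tecnico1}(2) for part (2), and the same appeal to Proposition \ref{maxg}(2) to get $\gg{1}'\circ\ff{1}\geq 0$ on $[0,\pb{1}]\times[0,\pb{2}]$ for part (3). The integral representation of $\psi$ is a nice touch but not a substantive departure.
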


We are interested in $\ll_i$ because, since $\hh{}{i}(p)=\ll{i}(p)\pp{i}$, it is enough to bound $\ll{i}$ in order to get exponential growth or decay of the density of a species. This is what we do in the next result.

\begin{prop}
	\label{propc}
	Assume that $\pphi{2}>\pphi{1}>1$. For small $\varepsilon> 0$ define $\ke$ as the unique solution of
	\[\gg{1}(1-e^{-\bet{1} \ke})\quad=\quad (1-\varepsilon)\ke\]
	in $(0,1)$. There are $\bar{c},\varepsilon,\varepsilon'>0$ small such that for all $c\leq \bar{c}$:
	\begin{enumerate}[label=\textup{(\arabic*)}]
		\item \label{propc1}For all $0<\pp{1}<\ke$ it holds that
		\begin{align}
		\label{propc11} \pp{2}\in(0,c)&~\Longrightarrow~\ll{2}(p)>1+\varepsilon',\\
		\label{propc12} \pp{2}\in(c,\pb{2})&~\Longrightarrow~\hh{}{2}(p)>(1+\varepsilon')c.
		\end{align}
		\item \label{propc2} Under the additional assumption $\pphi{2}>\betac$, the property in (1) holds for all $\pp{1}>0$.
		\item \label{propc3} If $\pphi{1}<\betac$, then:
		\begin{align}
		\label{propc31}\pp{1}\in(0,\ke)&~\Longrightarrow~\hh{}{1}(p)\leq(1-\varepsilon')\ke.\\
		\label{propc32} \pp{1}\in(\ke,\pb{1})&~\Longrightarrow~\ll{1}(p)\leq 1-\varepsilon'.
		\end{align}
	\end{enumerate}
\end{prop}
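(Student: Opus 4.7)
The plan starts with checking the well-definedness of $\kappa_\varepsilon$. Setting $\varepsilon=0$ in the defining equation recovers the fixed-point equation $g_{\alpha_1}(1-e^{-\beta_1 p})=p$ for the one-species dynamics of type 1, which under the standing assumption $\phi_1>1$ has a unique positive solution $p_1^\ast$. The intermediate value theorem applied to $\tilde h_1(p)-(1-\varepsilon)p$, with $\tilde h_1(p)=g_{\alpha_1}(1-e^{-\beta_1 p})$, then yields existence of $\kappa_\varepsilon$ for all small $\varepsilon>0$ with $\kappa_\varepsilon\to p_1^\ast$ as $\varepsilon\to 0$, and uniqueness follows from the strict monotonicity of $\tilde h_1(p)/p$ guaranteed by \Cref{crecimiento}(2).

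For parts (1) and (2), the cornerstone is the limit
\[
\lim_{p_2\to 0^+} l_2(p_1,p_2)\;=\;\phi_2\cdot\frac{1-e^{-\beta_1 p_1}}{\beta_1 p_1},
\]
obtained using \eqref{eq:ffMMM} and the fact that $g_{\alpha_2}(x)/x\to 1-\alpha_2$ as $x\to 0^+$. Since this limit is decreasing in $p_1$, for $p_1\in(0,\kappa_\varepsilon]$ it is minimized at $p_1=\kappa_\varepsilon$; combining the decomposition \eqref{eq:Galpha} with the defining equation of $\kappa_\varepsilon$ rewrites that minimum as $(\phi_2/\phi_1)(1-\varepsilon)\,G_{\alpha_1}(1-e^{-\beta_1\kappa_\varepsilon})^{-3}$, which is at least $(\phi_2/\phi_1)(1-\varepsilon)$ since $G_{\alpha_1}\le 1$. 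Because $\phi_2/\phi_1>1$, choosing $\varepsilon$ small enough makes this $\ge 1+2\varepsilon'$ for some $\varepsilon'>0$. A direct computation shows that $l_2$ admits a continuous extension to the compact rectangle $[0,\kappa_\varepsilon]\times[0,\bar p_2]$ by its limit at $p_2=0$, so choosing $\bar c$ small enough gives (1a). For (1b), on $[0,\kappa_\varepsilon]\times[c,\bar p_2]$ the function $f_\beta^{(2)}$ takes values in a closed interval $[y_\ast(c),y^\ast]\subset(0,1)$; by \Cref{maxg}(1) $g_{\alpha_2}$ has no interior minimum, so its minimum on that interval is attained at one of the endpoints; a first-order expansion of $g_{\alpha_2}$ near $0$ bounds $g_{\alpha_2}(y_\ast(c))$ below by essentially the same ratio as in (1a) times $c$, while at $y^\ast$ we use that $g_{\alpha_2}(y^\ast)$ is a positive constant independent of $c$, so further shrinking $\bar c$ gives $g_{\alpha_2}(y^\ast)>(1+\varepsilon')c$.

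For part (3), \Cref{crecimiento}(3) (which applies under $\phi_1<2\log 2$) gives that $h^{(1)}$ is decreasing in $p_2$, so $h^{(1)}(p)\le h^{(1)}(p_1,0)=\tilde h_1(p_1)$. For (3a), the assumption $\phi_1<2\log 2$ combined with \Cref{maxg}(2) implies $f_{\beta_1}(p_1,0)<x_0$ for all $p_1\in[0,\bar p_1]$, so that $\tilde h_1$ is monotone increasing on $[0,\bar p_1]$ and hence $\tilde h_1(p_1)\le\tilde h_1(\kappa_\varepsilon)=(1-\varepsilon)\kappa_\varepsilon$; taking $\varepsilon'<\varepsilon$ finishes. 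For (3b), \Cref{crecimiento}(2) yields $l_1(p)\le l_1(p_1,0)=\tilde h_1(p_1)/p_1\le\tilde h_1(\kappa_\varepsilon)/\kappa_\varepsilon=1-\varepsilon$ for $p_1\ge\kappa_\varepsilon$, again choosing $\varepsilon'<\varepsilon$.

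The main obstacle is part (2), where the constraint $p_1<\kappa_\varepsilon$ is replaced by $p_1>0$ at the cost of requiring $\phi_2>2\log 2$. The continuity argument from (1) breaks because $\phi_2(1-e^{-\beta_1 p_1})/(\beta_1 p_1)$ may drop below $1$ for $p_1$ close to the upper end $\bar p_1$ when $\beta_1$ is large. To handle this I would split the proof into $p_1\le\kappa_\varepsilon$, which is covered by (1), and $p_1>\kappa_\varepsilon$, where one exploits that $\phi_2>2\log 2$ puts species 2 above the critical threshold identified in \cite{DYR09}: the growth step pushes $f_\beta^{(2)}(p)$ into the region where the decomposition $g_{\alpha_2}(y)=(1-\alpha_2)\,y\,G_{\alpha_2}(y)^3$ applies with $G_{\alpha_2}$ bounded uniformly away from $0$ on the feasible range of $f_\beta^{(2)}$, which restores the lower bound $l_2>1+\varepsilon'$; the extension of (1b) to this range then follows from a variant of the compactness argument above.
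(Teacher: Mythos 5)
Your treatment of parts (1) and (3) tracks the paper's argument: the bound $l_2\geq(\phi_2/\phi_1)(1-\varepsilon)G_{\alpha_1}^{-3}(1-e^{-\beta_1\kappa_\varepsilon})\geq(\phi_2/\phi_1)(1-\varepsilon)$ obtained from the defining equation of $\kappa_\varepsilon$ and $G_{\alpha_1}\leq1$ is exactly what the paper computes, and your handling of (1b) via the two endpoint candidates of $f_\beta^{(2)}$ and your monotonicity arguments in (3) are the paper's as well, just repackaged (limit-plus-continuity at $p_2=0$ versus direct inequalities for $p_2<c$).

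Part (2), however, has a genuine gap. You correctly diagnose that the prefactor $\phi_2\tfrac{1-e^{-\beta_1p_1}}{\beta_1p_1}$ degrades as $p_1$ grows, but your proposed fix --- ``the growth step pushes $f_\beta^{(2)}$ into a region where $G_{\alpha_2}$ is bounded away from $0$'' --- does not address this. For $p_2<c$ small, $f_\beta^{(2)}(p)$ is small no matter how large $p_1$ is, so $G_{\alpha_2}(f_\beta^{(2)}(p))$ is already close to $1$ and was never the problematic factor; the factor that can shrink is $\tfrac{f_\beta^{(2)}(p)}{p_2}\approx\beta_2\tfrac{1-e^{-\beta_1p_1}}{\beta_1p_1}$. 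The observation that makes the hypothesis $\phi_2>2\log2$ do work is one you never invoke: on the relevant range one has $p_1\leq\bar{\mathsf{u}}_1\leq\tfrac{1-\alpha_1}{2}$, hence $\beta_1p_1\leq\tfrac{\phi_1}{2}<\tfrac{\phi_2}{2}$, and since $x\mapsto\tfrac{1-e^{-x}}{x}$ is decreasing,
\[
\phi_2\,\frac{1-e^{-\beta_1p_1}}{\beta_1p_1}\;\geq\;\phi_2\,\frac{1-e^{-\phi_2/2}}{\phi_2/2}\;=\;2\big(1-e^{-\phi_2/2}\big)\;>\;1
\]
precisely when $\phi_2>2\log2$. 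Without this step the proof of (2) does not close; the split into $p_1\leq\kappa_\varepsilon$ and $p_1>\kappa_\varepsilon$ is also unnecessary once this bound is in hand, since it holds uniformly in $p_1\in(0,\bar{\mathsf{u}}_1]$. The extension of (1b) then follows because the two endpoint candidates become $(0,\bar{\mathsf{u}}_2)$ and $(\bar{\mathsf{u}}_1,c)$, both handled as in (1b), rather than ``a variant of the compactness argument'' which you leave unspecified.
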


Properties \ref{propc1} and \ref{propc2} state that when the stronger species starts at a low density, it grows exponentially until it reaches a certain threshold value $c$, which becomes a lower bound for its density from that time onwards. Property \ref{propc3}, on the other hand, states that if the fitness of the weaker species is below $\betac$, then its density decays exponentially until it reaches a trapping set $[0,\ke]$.

The proofs of the last three propositions are mostly calculus, so we defer them to the appendix.

\subsection{Proof of Theorem \ref{extcoex}(ii)}
\label{coex}

As we just discussed, \Cref{propc} already provides a good control on the behavior of the stronger species, so our main focus will be on the weaker one. Assuming that the coexistence conditions of \Cref{extcoex} are satisfied, our approach consists in analyzing the dynamical system when the density of the weaker species is at low values. In that situation we will approximate $\hh$ by a simpler function $\bh$, and show that for this particular dynamical system the density $\pp{1}$ tends to grow on average.

The approximating map $\bh:\R\times[0,1]\rightarrow\R\times[0,1]$ which we will use is the linearization of $\hh$ in its first component,
\[\bh(p)\;=\;\left(\begin{array}{c}\hh{}{1}(0,\pp{2})+\pp{1}\frac{\partial \hh{}{1}}{\partial \pp{1}}(0,\pp{2})\\ \hh{}{2}(0,\pp{2})\end{array}\right)\;=\;\left(\begin{array}{c}\pphi{1} \pp{1}\frac{1-e^{-\bet{2} \pp{2}}}{\bet{2} \pp{2}}\\ \hh{}{2}(0,\pp{2})\end{array}\right).\]
The next result states that this approximation is good uniformly in $\pp{2}$:

\begin{prop}\label{lem:1}
	For any fixed $k\in\N$ we have $\lim_{\pp{1}\to 0}\frac{\bh{k}{1}(\pp)}{\hh{k}{1}(\pp)}=1$ uniformly in $\pp{2}\in [0,1]$.
\end{prop}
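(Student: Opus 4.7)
The plan is to proceed by induction on $k$. Note that $\bh$ is by construction the first-order Taylor approximation of $h$ in $\pp{1}$ at $\pp{1}=0$ (using $\hh{}{1}(0,\pp{2})=0$), so the base case reduces to uniform control of the one-step Taylor error. To obtain uniformity in $\pp{2}$, I would first desingularize $\ff{1}$ at the origin by writing
\[\ff{1}(\pp)=\bet{1}\pp{1}\varphi(\bet{1}\pp{1}+\bet{2}\pp{2}),\qquad \varphi(x)=(1-e^{-x})/x,\]
extended by $\varphi(0)=1$; this $\varphi$ is smooth and bounded away from $0$ on $[0,\bet{1}+\bet{2}]$. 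Combining this with $\gg{1}(0)=0$ and $\gg{1}'(0)=1-\alp{1}$ and Taylor expanding in $\pp{1}$ yields
\[\hh{}{1}(\pp)=\pp{1}\ts\psi(\pp{2})+\pp{1}^{2}\ts\chi(\pp),\qquad \psi(s)\coloneqq\pphi{1}\ts\varphi(\bet{2}s),\]
with $\chi$ continuous (hence bounded) on the simplex $\{\pp{1}+\pp{2}\leq 1\}$ and $\psi$ continuous and strictly positive on $[0,1]$. Since $\bh{}{1}(\pp)=\pp{1}\psi(\pp{2})$, this immediately gives $\hh{}{1}(\pp)/\bh{}{1}(\pp)=1+\pp{1}\chi(\pp)/\psi(\pp{2})\to 1$ uniformly in $\pp{2}$, settling $k=1$.

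For the inductive step, write $r_j=\hh{j}{1}(\pp)$, $s_j=\hh{j}{2}(\pp)$, $\bar r_j=\bh{j}{1}(\pp)$, $\bar s_j=\bh{j}{2}(\pp)$, and note that $\bar s_j$ depends only on $\pp{2}$, since the second coordinate of $\bh$ ignores $\pp{1}$. The argument requires two auxiliary uniform convergences: \emph{(a)} $r_j\to 0$ as $\pp{1}\to 0$, uniformly in $\pp{2}\in[0,1]$, which follows from continuity of $\hh{j}{1}$ on the compact simplex combined with $\hh{j}{1}(0,\cdot)=0$; and \emph{(b)} $s_j-\bar s_j\to 0$ uniformly in $\pp{2}$, proved by induction on $j$ using the bound $|\hh{}{2}(r,s)-\hh{}{2}(0,s)|\leq Cr$ (uniform in $s$, obtained by the same desingularization applied to $\ff{2}$) together with Lipschitz continuity of $s\mapsto\hh{}{2}(0,s)$. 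Assuming $r_k/\bar r_k\to 1$ uniformly, apply the one-step expansion at $(r_k,s_k)$:
\[\frac{r_{k+1}}{\bar r_{k+1}}\;=\;\frac{r_k}{\bar r_k}\cdot\frac{\psi(s_k)}{\psi(\bar s_k)}+\frac{r_k^{2}\chi(r_k,s_k)}{\bar r_k\ts\psi(\bar s_k)}.\]
The first factor tends to $1$ by the inductive hypothesis; $\psi(s_k)/\psi(\bar s_k)\to 1$ by (b) and continuity of $\psi$ (which stays bounded away from $0$); the error term is bounded by a constant times $r_k\cdot(r_k/\bar r_k)$, which tends to $0$ by (a) and the inductive hypothesis. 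All these convergences are uniform in $\pp{2}$, closing the induction.

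The main obstacle is that $r_k$ and $\bar r_k$ both collapse to $0$ as $\pp{1}\to 0$, so one cannot replace the ratio by a difference and must propagate multiplicative control through the iteration. This in turn forces one to work with a Taylor remainder that is genuinely quadratic in $\pp{1}$ with a constant uniform in $\pp{2}\in[0,1]$, which is the role of the desingularization of $\ff{1}$ at the origin, and to use that $\psi$ is bounded below by a positive constant on $[0,1]$ (which here follows from $\pphi{1}>0$ together with $(1-e^{-\bet{2}})/\bet{2}>0$).
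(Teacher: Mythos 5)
Your proposal is correct. It shares the paper's overall structure — an induction on $k$ that tracks the ratio $\bh{k}{1}/\hh{k}{1}$ through a one-step recursion and uses two auxiliary uniform convergences ($\hh{k}{1}\to 0$ and $\hh{k}{2}-\bh{k}{2}\to 0$) — but the mechanism producing the recursion is different. The paper exploits the explicit factorization $g_\alpha(x)=(1-\alpha)x\,G_\alpha(x)^3$ to derive the \emph{exact} multiplicative identity
\[
\frac{\bh{k}{1}(p)}{\hh{k}{1}(p)}\;=\;\frac{\bh{k-1}{1}(p)}{\hh{k-1}{1}(p)}\,\frac{\psi(\overline{\Sigma}_{k-1}(p))}{\psi(\Sigma_{k-1}(p))}\,G_{\alp{1}}^{-3}\big(\ff{1}\circ\hh{k-1}{}(p)\big),
\]
with no remainder, and then shows the last two factors converge to $1$ uniformly because $\Sigma_{k-1}$ and $\overline{\Sigma}_{k-1}$ converge to the same limit and $G_{\alp{1}}(x)\to 1$ as $x\to 0$. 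You instead linearize $\hh{}{1}$ to second order in $\pp{1}$, writing $\hh{}{1}(\pp)=\pp{1}\psi(\pp{2})+\pp{1}^2\chi(\pp)$, which yields a multiplicative factor $\psi(s_k)/\psi(\bar s_k)$ plus an \emph{additive} error term that you then kill using the uniform smallness of $r_k$. The paper's version is marginally tighter since there is no remainder to control; yours is more generic, using only that $g_\alpha$ is $C^2$ near $0$ with $g_\alpha(0)=0$ and $g_\alpha'(0)=1-\alpha$ rather than its precise form. Both arguments rely on the same underlying facts; the observation in the paper that $\Sigma_k$ and $\overline{\Sigma}_k$ have the same limit is exactly the combination of your (a) and (b).
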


The following function will be used in the proof of the proposition and in later results:
\begin{align}\label{psi}
	\psi(x)=\frac{1-e^{-x}}{x}.
\end{align}

\begin{proof}[Proof of \Cref{lem:1}]
	Let $\Sigma_k(p)=\bet{1}\hh{k}{1}(p)+\bet{2}\hh{k}{2}(p)$ and $\overline{\Sigma}_k(p)=\bet{2}\bh{k}{2}(p)$. Using these values and the definition of $\hh$ and $\bh$ it is fairly simple to see that
	\begin{equation}
	\label{eqa1}	
	\frac{\bh{k}{1}(p)}{\hh{k}{1}(p)}\;=\;\frac{\bh{k-1}{1}(p)}{\hh{k-1}{1}(p)}\ts\frac{\psi(\overline{\Sigma}_{k-1}(p))}{\psi(\Sigma_{k-1}(p))}\ts(\GG{1})^{-3}\circ\ff{1}\circ\hh{k-1}{1}(p).
	\end{equation}
	The function $\psi$ is uniformly continuous and bounded away from 0 for $x\in[0,1]$.
  Noticing that $\Sigma_k(p)$ and $\overline{\Sigma}_k(p)$ converge to the same value as $\pp{1}\to 0$ and in view of (2) of \Cref{tecnico1}, the last two factors on the right hand side of \eqref{eqa1} converge to $1$ uniformly, so $\bh{k}{1}(p)/\hh{k}{1}(p)$ converges to $1$ uniformly if $\bh{k-1}{1}(p)/\hh{k-1}{1}(p)$ does. Since $\bh{0}{1}(p)=\hh{0}{1}(p)=\pp{1}$, the result follows by repeating the argument $k$ times.
\end{proof}

Thanks to this proposition we can approximate $\hh$ by $\bh$ whenever $\pp{1}$ is small enough, independently of the value of $\pp{2}$. 
The resulting dynamical system $(\qq{}{k})_{k\in\N}$ can be realized by first running the one-dimensional \MM for type 2 by itself, and then using its trajectory to compute the values of $\qq{1}{n}$ as
\begin{equation}{\small
\label{prod}\qq{1}{n}\;=\;\qq{1}{0}\,\prod_{k=0}^{n-1}\pphi{1}\psi\big(\bet{2}\qq{2}{k}\big)\;=\;\qq{1}{0}\left(\pphi{1}\pbhi{n}(\qq{2}{0})\right)^n\quad\text{with}\quad\pbhi{n}(x)\;=\;\left(\prod_{k=0}^{n-1}\psi\big(\bet{2}\hh{k}{2}(0,x)\big)\right)^{1/n},}
\end{equation}
where $\psi(x)$ is given in \eqref{psi}. This suggests that it will be useful to study
\begin{equation}
\underline{\pbhi}(x)\;\coloneqq\;\liminf_{n\to\infty}\pbhi{n}(x).\label{eq:defpbphi}
\end{equation}
In view of \eqref{prod}, $\pphi{1}\underline{\pbhi}(\qq{1}{0})$ can be interpreted as the average growth of type 1 when taking into account the effect of type 2. In order to control this growth we define $\etac$ to be the smallest possible value of $\underline{\pbhi}$, that is
\begin{equation}
\etac=\inf\nolimits_{x\in[0,\pb{2}]}\underline{\pbhi}(x),\label{eq:defeta}
\end{equation}
where the infimum is taken over $[0,\pb{2}]$ since by \eqref{barp} after one iteration of the system the process gets trapped in $[0,\pb{2}]$. The following result shows that a good control on $\etac$ allows us to make $\qq{1}{k}$ as large as we want:

\begin{lemma}\label{lem:2}
	If $\pphi{1}\etac>1$, then for all $M>0$ there exists $\bar{k}\in \N$ satisfying the following property:
	for all $\qq{2}{0}\in [0,\pb_2]$, there is a $0\leq k\leq \bar{k}$ such that
	\begin{equation}
	\label{eq:3}
	\prod_{j=0}^{k-1}\pphi{1}\psi(\bet{2}\qq{2}{j})\;>\;M.
	\end{equation}
\end{lemma}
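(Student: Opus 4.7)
The plan is to reinterpret the product in \eqref{eq:3} using the identity in \eqref{prod}, apply the hypothesis $\pphi{1}\etac>1$ pointwise in $\qq{2}{0}$, and then upgrade pointwise control to uniform control via continuity of $\pbhi{n}$ together with compactness of $[0,\pb{2}]$.

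First, since $\qq{2}{j}=\hh{j}{2}(0,\qq{2}{0})$, the left-hand side of \eqref{eq:3} equals $(\pphi{1}\pbhi{k}(\qq{2}{0}))^k$. Using the hypothesis I would fix $\delta>0$ with $\pphi{1}\etac\geq 1+2\delta$, and set $n_0=\lceil \log M/\log(1+\delta)\rceil+1$, so that $(1+\delta)^n>M$ for every $n\geq n_0$. The goal is then to find, uniformly in $x\in[0,\pb{2}]$, some $k\leq\bar{k}$ with $(\pphi{1}\pbhi{k}(x))^k>(1+\delta)^k>M$.

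Next, for each fixed $x\in[0,\pb{2}]$, the definition of $\etac$ and $\underline{\pbhi}$ give $\liminf_{n\to\infty}\pphi{1}\pbhi{n}(x)\geq \pphi{1}\etac\geq 1+2\delta$, so there exists $N(x)\geq n_0$ with $\pphi{1}\pbhi{N(x)}(x)\geq 1+\delta$, and hence $(\pphi{1}\pbhi{N(x)}(x))^{N(x)}\geq(1+\delta)^{N(x)}>M$. The function $\pbhi{N(x)}$ is continuous on $[0,\pb{2}]$, since $\psi$ is continuous and strictly positive on $[0,\infty)$ (with $\psi(0)=1$) and each iterate $\hh{k}{2}(0,\cdot)$ depends continuously on its argument; hence there is an open (in the subspace topology) neighborhood $U_x\subseteq[0,\pb{2}]$ of $x$ on which $(\pphi{1}\pbhi{N(x)}(\cdot))^{N(x)}>M$ continues to hold.

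Finally, by compactness of $[0,\pb{2}]$, finitely many $U_{x_1},\dotsc,U_{x_m}$ cover it; setting $\bar{k}=\max_{i\leq m}N(x_i)$, every $\qq{2}{0}\in[0,\pb{2}]$ lies in some $U_{x_i}$ and thus satisfies \eqref{eq:3} with $k=N(x_i)\leq\bar{k}$. I do not anticipate any serious obstacle: the only place where a bit of care is needed is verifying the continuity of $\pbhi{N(x)}$, but this is immediate from its explicit product formula and the continuity of $h$. The rest is a routine compactness-based promotion from pointwise to uniform control.
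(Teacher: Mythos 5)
Your proposal is correct and follows essentially the same route as the paper: use the hypothesis $\pphi{1}\etac>1$ to get, for each initial point, a finite $k$ with the product exceeding $M$, then upgrade this to a uniform $\bar{k}$ via continuity (of $\psi$ and the iterates of $h$) and a finite-subcover/compactness argument on $[0,\pb{2}]$. The only cosmetic difference is that the paper covers $[0,\pb{2}]$ by the open sets $O_k$ of initial conditions satisfying the inequality at step $k$, whereas you cover it by neighborhoods $U_x\subseteq O_{N(x)}$; this is the same argument.
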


\begin{proof}
	From the hypothesis we know that there exists $\delta>0$ such that $\pphi{1}=\sfrac{1+2\delta}{\etac}$. Taking $\varepsilon>0$ small enough such that $(1-\varepsilon)(1+2\delta) > 1+\delta$, for each $\qq{2}{0}$ we can find $\underline{k}\in\N$ such that for all $k\geq \underline{k}$ 
	\[\label{eq:2}
	\pphi{1}\pbhi{k}(\qq{2}{0})\;>\;(1-\varepsilon)	\pphi{1}\underline{\pbhi}(\qq{2}{0})\;\geq\;(1-\varepsilon)\pphi{1}\etac\;>\;1+\delta,
	\]
	where the first inequality follows from the definition of $\underline{\pbhi}$. Using the definition of $\pbhi{k}$ we obtain 
	$\pphi{1}\left(\prod_{j=1}^{k-1}\psi(\bet{2} \qq{2}{j})\right)^{1/k}>1+\delta$ for all $k\geq \underline{k}$.
	In particular we find that for each $\qq{2}{0}$ there is some $k\geq\underline{k}$ such that
	$\prod_{j=0}^{k-1}\pphi{1}\psi(\bet{2}\qq{2}{j})\;>\;M$.
	For $k$ fixed call $O_k$ the set of all $\qq{2}{0}$ satisfying the last inequality for that given value of $k$. From the continuity of $\bh$ and $\psi$ each $O_k$ is open, and from the previous argument each $\qq{2}{0}$ belongs to some $O_k$, so $(O_k)_{k\in\N}$ is an open cover of $[0,\pb{2}]$, which necessarily contains a finite subcover. Taking $\bar{k}$ to be the largest index of the subcover yields the result.
\end{proof}

The next result shows that if $\pphi{1}\etac>1$ then after the species 1 density gets above a certain threshold parameter $c$, it cannot stay below $c$ for more than $\bar{k}$ consecutive steps afterwards.
The idea is simple: as long as the trajectory of $\pp{1}{k}$ stays small then the system is well approximated by $\DS(\bh)$, but the last proposition says that the first component of this system gets large, which hints at a contradiction.
This will be helpful below in showing (ii) in the definition of interior-recurrence for a suitable set.

\begin{prop}\label{theo:1}
	Suppose that $\pphi{1}\etac>1$. There is a $\bar{c}>0$ satisfying the following: for all $c\leq\bar{c}$ we can find $\bar{k}\in\N$ such that for all $n\in\N$
	\begin{equation}
	\label{jumpc}
	c\leq \pp{1}{n}<\pb{1}\quad\Longrightarrow\quad\exists k\leq\bar{k}\text{ such that }\pp{1}{n+k}>\tfrac{3}{2r}c
	\end{equation}
  with $r=\inf_{p\leq\pb{}}\ll{1}(p)$.
\end{prop}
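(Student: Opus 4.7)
The plan is to treat two regimes separately. If $\pp{1}{n}>\frac{3c}{2r}$ then the conclusion holds trivially with $k=0$, so the interesting range is $c\leq\pp{1}{n}\leq\frac{3c}{2r}$. In this range $\pp{1}{n}$ is of order $c$ and hence uniformly small as $c$ shrinks, which invites approximating the iterates of $h$ by those of the linearization $\bh$ in the first coordinate.

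Fix $\varepsilon\in(0,1/10)$ and set $M=\frac{2}{r(1-\varepsilon)}$, so that $(1-\varepsilon)M>\frac{3}{2r}$. Since $\pphi{1}\etac>1$, Lemma \ref{lem:2} yields a $\bar{k}\in\N$ (depending only on $M$) such that for every $\pp{2}{n}\in[0,\pb{2}]$ there exists $k\leq\bar{k}$ with
\[\prod_{j=0}^{k-1}\pphi{1}\psi\bigl(\bet{2}\hh{j}{2}(0,\pp{2}{n})\bigr)>M,\]
which by the product formula \eqref{prod} is precisely the statement $\bh{k}{1}(\pp{1}{n},\pp{2}{n})>M\pp{1}{n}$. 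The edge case $n=0$, where $\pp{2}{0}$ need not lie in $[0,\pb{2}]$, is handled either by using $\bar{k}+1$ (after one step the iterate falls into $[0,\pb{2}]$) or by noting that the compactness argument underlying Lemma \ref{lem:2} extends without change to $[0,1]$.

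Next, Proposition \ref{lem:1} asserts that $\bh{k}{1}(p)/\hh{k}{1}(p)\to1$ as $\pp{1}\to0$ uniformly in $\pp{2}\in[0,1]$ for each fixed $k$; because $\bar{k}$ is finite, this uniformity extends jointly to all $k\leq\bar{k}$. Hence there is $\delta>0$ such that $\hh{k}{1}(p)\geq(1-\varepsilon)\bh{k}{1}(p)$ whenever $\pp{1}\leq\delta$, uniformly in $\pp{2}\in[0,1]$ and $k\leq\bar{k}$. Choosing $\bar{c}=\min\{\pp{1}{0}/2,\,\tfrac{2r\delta}{3}\}$ ensures that $c\leq\pp{1}{n}\leq\frac{3c}{2r}$ forces $\pp{1}{n}\leq\delta$, and combining the two bounds yields
\[\pp{1}{n+k}=\hh{k}{1}(\pp{1}{n},\pp{2}{n})\geq(1-\varepsilon)\bh{k}{1}(\pp{1}{n},\pp{2}{n})>(1-\varepsilon)M\pp{1}{n}\geq(1-\varepsilon)Mc>\frac{3c}{2r},\]
as required.

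The main delicate point will be guaranteeing that the linearization error $|\hh{k}{1}-\bh{k}{1}|$ stays uniformly negligible over all $\bar{k}$ iterations and all possible values of $\pp{2}{n}$; this is exactly the content of Proposition \ref{lem:1} combined with the finiteness of $\bar{k}$ furnished by Lemma \ref{lem:2}, which together allow the $k$-by-$k$ uniformity in $\pp{2}$ to be repackaged into a single threshold $\delta$ controlling all $\bar{k}$ steps simultaneously.
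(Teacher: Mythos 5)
Your proof is correct and uses the same two key ingredients as the paper's: Lemma~\ref{lem:2} (to obtain the finite horizon $\bar k$ over which the linearized product exceeds $M$, uniformly in $\pp{2}$) and Proposition~\ref{lem:1} (to replace $\hh{k}{1}$ by $\bh{k}{1}$ uniformly over $k\leq\bar k$ once $\pp{1}$ is small). The only cosmetic difference is that you argue directly from time $n$ with a case split on whether $\pp{1}{n}>\tfrac{3c}{2r}$, and accordingly take $M=\tfrac{2}{r(1-\varepsilon)}$, whereas the paper argues by contradiction starting from time $n+1$ (in the sub-case where the first coordinate has just dropped below $c$) and compensates for the extra step with $M=2/r^2$; these are equivalent.
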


\begin{proof}
	Let $M=2/r^2$, choose $\bar{k}$ as in \Cref{lem:2} for that value of $M$ and use the uniform convergence proved in \Cref{lem:1} to choose $\delta_0>0$ such that
	\begin{equation}\label{eq:1}
	\pp{1}<\delta_0 \quad\Longrightarrow\quad {\bh{k}{1}(\pp)}/{\hh{k}{1}(\pp)} <{4}/{3}\;\;\;\forall \pp{2} \in [0,1],\;\;\forall k=1,\dotsc,\bar{k}.
	\end{equation}
	Define now $\bar{c}=\frac{2\delta_0}{3}$.
  We prove \eqref{jumpc} by contradiction as follows. Choose $c<\bar{c}$ and suppose that for some $n\in\N$ we have $\pp{1}{n}\geq c>\pp{1}{n+1}$ and that  there is no $k\leq\bar{k}$ such that $\pp{1}{n+k}>3c/(2r)$.
  From our choice of $\bar{c}$ we know each $\pp{1}{n+k}$ is smaller than $\delta_0$, so from \eqref{eq:1}, for each $k\leq\bar{k}$ we have
	\begin{equation}\label{eq:4}
	\textstyle\pp{1}{n+k}=\hh{k}{1}(\pp{}{n})\geq \tfrac{3}{4}\bh{k}{1}(\pp{}{n})=\tfrac{3}{4}\tts\pp{1}{n+1}\!\prod_{j=0}^{k-1}\frac{\pphi{1}(1-e^{-\bet{2} \qq{2}{j}})}{\bet{2}\qq{2}{j}}.
	\end{equation}
	However, for the specific value of $k$ given in \Cref{lem:2} with initial condition $\pp{1}{n+1}$, we can bound the right hand side in \eqref{eq:4} from below by $3\pp{1}{n+1}/(2r^2)$. This is a contradiction with our assumption $\pp{1}{n+k}<3c/(2r)$ because
	\begin{equation}\label{eq:5}
	\pp{1}{n+k}>\tfrac{3}{2r^2}\pp{1}{n+1}=\tfrac{3}{2r^2}\ll{1}(\pp{1}{n})\pp{1}{n}\geq\tfrac{3}{2r^2}rc=\tfrac{3}{2r}c,
	\end{equation}
	where the last inequality follows from the definition of $r$ and the assumption $\pp{1}{n}\geq c$.
\end{proof}

Using the tools developed so far we can now prove the coexistence statement of \Cref{extcoex}:

\begin{proof}[Proof of \Cref{extcoex}(ii)]
	Fix $r=\inf_{p\leq\pb{}}\ll{1}(p)$ and take $\bar{c}_1=\bar{c}$ as in \Cref{theo:1} so that \eqref{jumpc} holds for all $c_1\leq \bar{c}_1$. Next, observe that from the assumption $\pphi{2}>\betac$ we can take $\bar{c}_2$ small so that the statement of  \Cref{propc} holds for all $c_2\leq \bar{c}_2$. To see that the set is interior-recurrent, notice that from \eqref{propc12} in \Cref{propc}, for any $\pp{2}\in(c_2,\pb{2})$ we have $\hh{}{2}(\pp{2})>(1+\varepsilon')c_2$ independently of $\pp{1}$, so $\pp{2}{n}$ never goes below $c_2$. In particular both requirements for interior-recurrent are satisfied with $\bar{k}=1$ in the second component. To deduce the same for the first component notice that from the definition of $r$, we have that $\pp{1}>\frac{c_1}{r}$ implies that $\pp{1}{1}>c_1$, and from \Cref{theo:1} there is $\bar{k}$ such that
	$\frac{c_1}{r}>\pp{1}\geq c_1$ implies that there is a $k\leq\bar{k}$ such that $\pp{1}{k}>\tfrac{3}{2r}c_1$, so both requirements for interior-recurrence are satisfied in this component as well. Observe that we have shown that $[c_1,\pb{1}]\times[c_2,\pb{2}]$ is interior-recurrent, but since all the functions involved are continuous and the set is compact we can extend this property to $[c_1,\pb{1}+\varepsilon_1]\times[c_2,\pb{2}+\varepsilon_2]$ (maintaining the same $\bar{k}$) provided $\varepsilon_1$ and $\varepsilon_2$ are small enough.
\end{proof}
 
In order to finish the proof of \Cref{extcoex}(ii) we need to introduce the function $\mathcal{F}_1$ explicitly and explain how the condition $\pphi{1}>\mathcal{F}_1(\alp{2},\pphi{2})$ is sufficient to conclude that $\pphi{1} \etac>1$. 
To do so, define $P_c,P_f\in(0,1)$ as the only critical point and the only positive fixed point of $h(0,\cdot)$, respectively. The fact that $h(0,\cdot)$ has a unique critical point (which is a maximum of the function) follows from \Cref{maxg} and the fact that $x\rightarrow 1-e^{-\bet{2}x}$ is increasing, while the existence of a unique positive fixed point can be proved analogously to the existence and uniqueness of $\ke$ in \Cref{propc} since $\pphi{2}>1$. Using once again \Cref{maxg}, $P_c$ and $P_f$ satisfy
\begin{equation}\label{PcPf}\GG{2}(1-e^{-\bet{2} P_c})=\tfrac{3}{2}-e^{-\bet{2} P_c},\quad\text{ and }\quad \gg{2}(1-e^{-\bet{2} P_f})=P_f.\end{equation}
The two points are related to $\etac$ in the following way:
\begin{itemize}[itemsep=6pt,leftmargin=15pt]
	\item Suppose first that $P_f\leq P_c$, which is equivalent to $h(0,P_c)\leq P_c$. In this case, starting from any initial condition $\pp{2}{0}\in(0,1)$ we have $\pp{2}{1}=h(0,\pp{2}{0})\leq h(0,P_c)\leq P_c$, where in the first inequality we have used that $P_c$ is a global maximum for $h(0,\cdot)$. It follows that the whole orbit (except maybe for the initial value) of $\pp{2}{0}$ is contained in $(0,P_c]$, where the function is increasing. From the definition of $P_f$ and the monotonicity of the function we have
	\[0<\pp{2}{0}<P_f\,\Longrightarrow\,\pp{2}{0}<h(0,\pp{2}{0})<h(0,P_f)=P_f\]
	and hence for $0<\pp{2}{0}<P_f$ the sequence $\pp{2}{k}$ converges to $P_f$. Similarly, for $P_f<\pp{2}{0}<P_c$, the sequence $\pp{2}{k}$ decreases towards $P_f$, and hence we conclude that for any $\pp{2}{0}\in(0,1)$ the sequence converges to $P_f$, so
	\begin{equation}\label{coexfinal1}\etac\,=\,\psi(\bet{2}P_f).\end{equation}	
	\item Suppose now that $P_c<P_f$, which is equivalent to $h(0,P_c)>P_c$. Let $P_m=h(0,P_c)$ and observe that since $P_c$ is a global maximum for $h(0,\cdot)$, the orbit $\pp{2}{k}$ is contained in $[0,P_m]$ for any $\pp{2}{0}$. To control $\etac$ in this scenario observe that $h(0,\cdot)$ is decreasing in $[P_f,P_m]$ so
	\[P_f\leq \pp{2}{0}\leq P_m\,\Longrightarrow\,h(0,\pp{2}{0})\leq h(0,P_f)=P_f\]
	meaning that at least half of the points in the orbit of $\pp{2}{0}$ lie within $[0,P_f]$. Using that $\psi(x)$ is decreasing together with the previous observation, we conclude that
		\begin{equation}\label{coexfinal2}\etac\,\geq\,\sqrt{\psi(\bet{2}P_f)\,\psi(\bet{2}P_m)}\end{equation}
\end{itemize}

Finally, define $x_0$ as the only critical point of $\gg{2}$, which depends only on $\alp{2}$, and observe that using \eqref{PcPf} the condition $h(0,P_c)\leq P_c$ is equivalent to
$\pphi_2\tts x_0(x_0+\tfrac{1}{2})^3+\log(1-x_0)\leq 0$.
Solving for $\pphi{2}$ we obtain a condition of the form $\pphi{2}< z(\alp{2})\colonequals \frac{-\log(1-x_0)}{x_0(x_0+1/2)^3}$ and hence letting 
\begin{equation}\label{eq:defcf1}
\mathcal{F}_1(\alp{2},\pphi{2})\,=\,\left\{\begin{array}{cl}\frac{1}{\psi(\bet{2}P_f)}&\text{ if }\pphi{2}\leq z(\alp{2})\\[10pt]\frac{1}{\sqrt{\psi(\bet{2}P_f)\psi(\bet{2}P_m)}}&\text{ if }\pphi{2}> z(\alp{2})\end{array}\right.
\end{equation}
we get $\etac\pphi{1}>1$ if $\pphi{1}>\mathcal{F}_1(\alp{2},\pphi{2})$.
This finishes the proof of \Cref{teodinamico}(ii). In the next proposition we recap the properties of $\mathcal{F}_1$ that were stated in \Cref{teodinamico}, and whose proof we defer to the appendix.
\begin{prop}\label{propf1}
	Let $\mathcal{F}_1$ be as in \eqref{eq:defcf1}. Then, for fixed $\alpha$, $\mathcal{F}_1(\alpha,\pphi)$ is increasing as a function of $\pphi$ and satisfies $\mathcal{F}_1(\alpha,\pphi)=\Theta(\sqrt{\pphi\log(\pphi)})$ for large $\pphi$. In particular, for large $\pphi$ we have $\mathcal{F}_1(\alpha,\pphi)<\pphi$.
\end{prop}

\subsection{Proof of Theorem \ref{extcoex}(i)}

Our goal here is to prove that under the general assumption $\pphi{1}<\pphi{2}$ there are some $\bar{c},\varepsilon>0$ such that for any $0<c<\bar{c}$ the set $[0,1]\times[c,\pb{1}+\varepsilon]$ is interior-recurrent.
If $\pphi{2}>2\log2$ we are in the setting of \Cref{propc}(2), and taking $\bar{c}$ as in that statement yields the result (since we can extend it to all $\pp{2}\in(c,\pb{2}+\varepsilon_2)$ by continuity, provided $\varepsilon$ is sufficiently small).
Suppose then that $\pphi{2}\leq2\log2$.
One can check that $x_0$, which lives in $[0,1/2]$, is decreasing as a function of $\alpha$, while the function $x_0\longmapsto\frac{-\log(1-x_0)}{x_0(x_0+1/2)^3}$ is decreasing for $x_0\in[0,1/2]$, so $z(\alpha)$ is increasing with $z(0)=2\log2$.
In particular, we get
\[\pphi{1}<\pphi{2}\leq 2\log2\leq z(\alp{1})\]
and hence
\[\mathcal{F}_1(\alp{1},\pphi{1})=\frac{1}{\psi(\bet{1}P_f)}=\pphi{1}G_{\alp{1}}(1-e^{-\bet{1}P_f})\leq\pphi{1}<\pphi{2}\]
(here $P_f$ is the fixed point from $\hh{}{1}(P_f,0)=P_f$). Hence the same argument as the one used for coexistence (with reversed indexes) can be used to conclude that there are $\bar{c}$ and $\varepsilon$ small such that $[0,1]\times[c,\pb{1}+\varepsilon]$ is interior-recurrent for any $0<c<\bar{c}$.

\subsection{Proof of Theorem \ref{extcoex}(iii)}
\label{ext}

Our goal here is to prove that there is an interior-recurrent set $B$ where the stronger species survives while the density of the weaker one decays exponentially. We begin by fixing $\bar{c}$, $\varepsilon$, $\varepsilon'$ and $\ke$ as in \Cref{propc}.
Using these parameters we introduce an auxiliary set $B_1$, which we will refine until obtaining the desired set $B$, as
\[B_1\,=\,\Big\{p\in[0,\ke]\times[c,\pb{2}],\;\ll{1}(p)<1\Big\},\]
where $0<c<\bar{c}$ is a small parameter to be fixed later and $\pb{2}$ is as in \eqref{barp}. Recalling that $\ll{1}(p)=\hh{}{1}(p)/\pp{1}{}$, it follows that $B_1$ corresponds to a set of points whose first coordinate decreases after one iteration of $h$.
The cornerstone of this section is the following result:

\begin{lemma}
	\label{lemmaA}
	Let $a_1(x)$ be the solution of $a_1(x)=x(1-e^{-a_1(x)})$ and assume that $\pphi{1}$ and $\pphi{2}$ satisfy
	\begin{equation}
	\label{cond1}a_1(\pphi{1})\;<\;\frac{\pphi{2}}{1-\alp{2}}\min\tsm\Big\{g_{\alp{2}}(1-e^{-\frac{\pphi{2}}{2}}),g_{\alp{2}}(1-e^{-a_1(\pphi{1})})\Big\}.
	\end{equation} Then
	\begin{equation}\label{supinf}
	\sup\nolimits_{p\in B_1}\ll{1}\circ \hh(p)<1\quad\text{and}\quad\inf\nolimits_{\ll{1}(p)\geq 1}\ll{2}(p)>1.
	\end{equation}
\end{lemma}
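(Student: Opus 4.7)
The plan is to reduce both parts of \eqref{supinf} to a one-variable comparison driven by the defining identity of $a_1$. Writing $\sigma(p)=\bet{1}p_1+\bet{2}p_2$ and $\psi(x)=(1-e^{-x})/x$, the relation $\pphi{1}\psi(a_1(\pphi{1}))=1$ (equivalent to the definition of $a_1$) shows that for $y>0$, $\pphi{1}\psi(y)<1$ iff $y>a_1(\pphi{1})$. Using the factorization $\gg{}(y)=(1-\alpha)\tts y\tts \GG{}(y)^3$ from \eqref{eq:Galpha} together with $\GG{}\leq 1$ (\Cref{tecnico1}), I get the uniform bound
\[
\ll{1}(q)=\pphi{1}\ts\psi(\sigma(q))\ts\GG{1}(\ff{1}(q))^3\;\leq\;\pphi{1}\ts\psi(\sigma(q)),
\]
which I apply with $q=h(p)$ for the first inequality of \eqref{supinf} and with $q=p$ for the second.

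For the first inequality, since $h_1(p)\geq 0$ implies $\sigma(h(p))\geq\bet{2}\hh{}{2}(p)$, the preliminary bound forces $\ll{1}(h(p))<1$ whenever $\bet{2}\hh{}{2}(p)>a_1(\pphi{1})$, so the task is to lower-bound $\hh{}{2}(p)=\gg{2}(\ff{2}(p))$ on $B_1$. I do this by pinning down the range of $\ff{2}(p)$ and exploiting the unimodality of $\gg{2}$ (\Cref{maxg}). Since $(1-e^{-x})/x$ is decreasing and $\sigma(p)\geq\bet{2}p_2$, $\ff{2}(p)=\psi(\sigma(p))\bet{2}p_2\leq 1-e^{-\bet{2}p_2}\leq 1-e^{-\pphi{2}/2}$, using $p_2\leq\pb{2}\leq(1-\alp{2})/2$. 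On the other side, the constraint $\ll{1}(p)<1$, combined with the preliminary bound at $q=p$ and the lower bound $\GG{1}(\ff{1}(p))\geq\GG{1}(1-e^{-\bet{1}\ke})>0$ (since $\ff{1}(p)\leq 1-e^{-\bet{1}\ke}$ on $B_1$), forces $\sigma(p)$ above a threshold that approaches $a_1(\pphi{1})$ once the parameters of \Cref{propc} are suitably tuned; combined with $\bet{1}p_1\leq\bet{1}\ke$ this yields $\ff{2}(p)\geq 1-e^{-a_1(\pphi{1})}$ modulo an error that the strict slack in \eqref{cond1} can absorb. Thus $\ff{2}(p)$ lies in $[1-e^{-a_1(\pphi{1})},\,1-e^{-\pphi{2}/2}]$, and on such an interval the unimodal $\gg{2}$ attains its minimum at one of the two endpoints, giving
\[
\hh{}{2}(p)\;\geq\;\min\!\big\{\gg{2}(1-e^{-\pphi{2}/2}),\;\gg{2}(1-e^{-a_1(\pphi{1})})\big\},
\]
so that $\bet{2}\hh{}{2}(p)>a_1(\pphi{1})$ by \eqref{cond1}.

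For the second inequality, the preliminary bound at $q=p$ turns $\ll{1}(p)\geq 1$ into $\sigma(p)\leq a_1(\pphi{1})$. This immediately yields $\psi(\sigma(p))\geq\psi(a_1(\pphi{1}))=1/\pphi{1}$ and, combining $\ff{2}(p)\leq 1-e^{-\sigma(p)}\leq 1-e^{-a_1(\pphi{1})}$ with the monotonicity of $\GG{2}$ (\Cref{tecnico1}), $\GG{2}(\ff{2}(p))\geq \GG{2}(1-e^{-a_1(\pphi{1})})$. Substituting both into the symmetric identity $\ll{2}(p)=\pphi{2}\psi(\sigma(p))\GG{2}(\ff{2}(p))^3$ and then using $\GG{2}(y)^3=\gg{2}(y)/[(1-\alp{2})y]$ evaluated at $y=1-e^{-a_1(\pphi{1})}=a_1(\pphi{1})/\pphi{1}$ (which uses the definition of $a_1$ again), the factors of $\pphi{1}$ cancel and one obtains
\[
\ll{2}(p)\;\geq\;\frac{\bet{2}\,\gg{2}(1-e^{-a_1(\pphi{1})})}{a_1(\pphi{1})},
\]
which exceeds $1$ precisely by the second term in the minimum of \eqref{cond1}.

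The main obstacle is making rigorous the heuristic $\ll{1}(p)<1\Rightarrow\sigma(p)>a_1(\pphi{1})$: in general this implication fails because $\GG{1}(\ff{1}(p))^3$ is strictly below $1$, so $B_1$ can contain points with $\sigma(p)$ slightly under $a_1(\pphi{1})$. The fix is to use the a priori bound $\ff{1}(p)\leq 1-e^{-\bet{1}\ke}$ on $B_1$ together with monotonicity of $\GG{1}$ to secure a uniform positive lower bound on $\GG{1}(\ff{1}(p))^3$, and then to tune the parameters $\varepsilon,c$ of \Cref{propc} so that the resulting threshold on $\sigma(p)$ is as close to $a_1(\pphi{1})$ as needed, with the strict inequality in \eqref{cond1} absorbing the remaining gap. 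A minor secondary point concerns whether the interval $[1-e^{-a_1(\pphi{1})},\,1-e^{-\pphi{2}/2}]$ straddles the unique maximum $x_0\in[0,1/2]$ of $\gg{2}$, but by the unimodality from \Cref{maxg} the endpoint lower bound holds in every case.
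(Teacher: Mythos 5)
Your treatment of the \emph{second} inequality in \eqref{supinf} is correct and actually cleaner than the paper's. The chain $\ll{1}(p)\geq 1\Rightarrow\pphi{1}\psi(\sigma(p))\geq 1\Rightarrow\sigma(p)\leq a_1(\pphi{1})\Rightarrow\ff{2}(p)\leq 1-e^{-a_1(\pphi{1})}$, followed by the identity $\GG{2}^3(y)=\gg{2}(y)/((1-\alp{2})y)$ evaluated at $y=a_1(\pphi{1})/\pphi{1}$, yields $\ll{2}(p)\geq\bet{2}\gg{2}(1-e^{-a_1(\pphi{1})})/a_1(\pphi{1})>1$ directly from the second term of the min in \eqref{cond1}, with no boundary reduction needed. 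The paper instead reduces to the level set $\ll{1}=1$ and refers back to the $\mathcal{C}_5$ computation.

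Your treatment of the \emph{first} inequality has a genuine gap. The claim that $\ff{2}(p)\in[1-e^{-a_1(\pphi{1})},\,1-e^{-\pphi{2}/2}]$ uniformly on $B_1$ is false. Consider $p=(\ke,c)$: this is in $B_1$ for all small $c$ (since $\ll{1}(\ke,c)\to\ll{1}(\ke,0)=1-\varepsilon<1$ as $c\to 0$), yet
\[
\ff{2}(\ke,c)=\frac{1-e^{-\bet{1}\ke-\bet{2}c}}{\bet{1}\ke+\bet{2}c}\,\bet{2}c\;\longrightarrow\;0
\qquad\text{as }c\to 0,
\]
so $\ff{2}(p)$ drops below $1-e^{-a_1(\pphi{1})}$ regardless of any bound on $\sigma(p)$: even if $\sigma(p)>a_1(\pphi{1})$, the factor $\bet{2}p_2/\sigma(p)$ kills $\ff{2}$ when $p_2$ is small. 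Consequently $\hh{}{2}(\ke,c)=\gg{2}(\ff{2}(\ke,c))\to 0$, so $\bet{2}\hh{}{2}(p)>a_1(\pphi{1})$ fails near this corner, and the uniform estimate $\ll{1}(h(p))\leq\pphi{1}\psi(\bet{2}\hh{}{2}(p))$ gives a bound approaching $\pphi{1}>1$, which is useless. This cannot be repaired by tuning $\varepsilon,c$, because the problem is structural (small $p_2$, not a threshold on $\sigma$). The paper handles this by first proving $\pphi{1}<2\log 2$ under \eqref{cond1}, which unlocks the monotonicity in \Cref{crecimiento}, then reducing (\Cref{bordes}) the supremum of $\ll{1}\circ h$ on $B_1$ to the three boundary pieces $\mathcal{C}_1\cup\mathcal{C}_4\cup\mathcal{C}_5$, and treating the troublesome low-$p_2$ side $\mathcal{C}_1$ by a separate smoothness-plus-monotonicity argument that pins the maximum at the corner $(a,c)$ and uses $h(a,c)\geq(a,c)$; \eqref{cond1} is only invoked on $\mathcal{C}_4$ and $\mathcal{C}_5$, not uniformly. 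You should either adopt that boundary decomposition for the first inequality, or find a genuinely different argument that explicitly handles the $p_2\approx c$ region rather than relying on a uniform lower bound for $\ff{2}$.
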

In words, the first statement of \eqref{supinf} implies that when starting from $B_1$, after one iteration of the dynamical system the key feature $\ll{1}(p)<1$ is preserved, while the second one says that whenever the first coordinate increases, i.e. $\ll{1}(p)\geq 1$, the second component of $p$ increases by a constant factor, which will be used to show that the system eventually reaches $B_1$.

\begin{proof}
	We begin by observing that, under \eqref{cond1}, $\pphi{1}<\betac$. To see this, since $\pphi{1}<\pphi{2}$ we only need to worry about the case $\pphi{2}>\betac$, where condition \eqref{cond1} gives
	\[a_1(\pphi{1})\;<\;\tfrac{\pphi{2}}{1-\alp{2}}g_{\alp{2}}(1-e^{-\frac{\pphi{2}}{2}})\;\leq\;8\tts\pphi{2}(1-e^{-\frac{\pphi{2}}{2}})e^{-\frac{3\pphi{2}}{2}},\]
	where we have used that $\GG{2}(x)\leq 2(1-x)$; the function on the right hand side is decreasing in $(\betac,\infty)$, so $a_1(\pphi{1})\leq16\log 2\ts(1-e^{-\frac{2\log 2}{2}})e^{-3\log 2}=\log2$, and thus $\pphi{1}<2\log 2$, using the definition and monotonicity of $a_1(x)$. 
  Thanks to this bound on $\pphi{1}$, \Cref{crecimiento} states that $\ll{1}$ is decreasing in both $\pp{1}$ and $\pp{2}$, while $\hh{}{1}$ is increasing in $\pp{1}$ and decreasing in $\pp{2}$.
	
	From the monotonicity of $\ll{1}$ we deduce that the level set $\{\ll{1}(p)=1\}$ defines a strictly decreasing function $\pp{2}=s(\pp{1})$, for which there are values $a$ and $b$ such that $\ll{1}(a,c)=\ll{1}(0,b)=1$, where $c$ is as in the definition of $B_1$. Using these values we can easily characterize $B_1$ as a set bounded by the curves\\[-10pt]
	\begin{minipage}{.5\textwidth}
		\begin{center}
			\begin{align*}
			\mathcal{C}_1&\colonequals \left\{(\pp{1},c),\;a\leq \pp{1}\leq \ke\right\}\\
			\mathcal{C}_2&\colonequals \left\{(\ke,\pp{2}),\;c\leq \pp{2}\leq \pb{2}\right\}\\
			\mathcal{C}_3&\colonequals \left\{(\pp{1},\pb{2}),\;0\leq \pp{1}\leq \ke\right\}\\
			\mathcal{C}_4&\colonequals \left\{(0,\pp{2}),\;b\leq \pp{2}\leq \pb{2}\right\}\\
			\mathcal{C}_5&\colonequals \left\{(\pp{1},s(\pp{1})),\;0\leq \pp{1}\leq a\right\}
			\end{align*}
		\end{center}
	\end{minipage}%
	\begin{minipage}{.5\textwidth}
		\begin{center}
			\begin{tikzpicture}[scale=0.8]
			\draw[->] (-0.3,0) -- (3.8,0) node[right] {$p_1$};
			\draw[->] (0,-0.3) -- (0,3.8) node[above] {$p_2$};
			\draw[line width = 2, blue] (3.5,0.4) -- (3.5,3.5);
			\draw[line width = 2, blue!20!red] (1.2,0.4) -- (3.5,0.4);
			\draw[line width = 2, blue] (0,3.5) -- (3.5,3.5);
			\draw[line width = 2, blue!20!red] (0,3.5) -- (0,1.6);
			\draw[domain=0:1.2,smooth,variable=\x, line width = 2, red!80!yellow]  plot ({\x},{\x*\x/1.2-2*\x+1.6});
			\node[blue!20!red] at (-0.3,2.5) {$\mathcal{C}_4$};
			\node[blue!20!red] at (2.5,0.7) {$\mathcal{C}_1$};
			\node[blue] at (1.8,3.8) {$\mathcal{C}_3$};
			\node[blue] at (3.85,2.2) {$\mathcal{C}_2$};
			\node at (1.9,2.5) {$B_1$};
			\node[red!80!yellow] at (0.4,0.5) {$\mathcal{C}_5$};
			\draw (0,1.6) -- (-0.2,1.6) node[left] {$b$};
			\draw (0,0.4) -- (-0.2,0.4) node[left] {$c$};
			\draw (1.2,0) -- (1.2,-0.2) node[below] {$a$};
			\draw (3.5,0) -- (3.5,-0.2) node[below] {$\ke$};
			\end{tikzpicture}
		\end{center}
	\end{minipage}\\[3pt]	
	We will make use of the following lemma, whose proof we postpone.

\begin{lemma}
	\label{bordes}
	\begin{equation}\sup\nolimits_{p\in B_1}\ll{1}\circ\hh(p)\quad = \quad \max\nolimits_{p\in \mathcal{C}_1\cup\mathcal{C}_4\cup\mathcal{C}_5}\ll{1}\circ\hh(p).
	\end{equation}
\end{lemma}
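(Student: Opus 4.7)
Since $B_1$ is compact and $\ll{1}\circ\hh$ is continuous, the supremum is attained at some $p^\ast\in B_1$; the inclusion $\sup_{B_1}\ll{1}\circ\hh\geq\max_{\mathcal{C}_1\cup\mathcal{C}_4\cup\mathcal{C}_5}\ll{1}\circ\hh$ is immediate from $\mathcal{C}_1\cup\mathcal{C}_4\cup\mathcal{C}_5\subseteq B_1$, so the task reduces to ruling out the relative interiors of $\mathcal{C}_2$ and $\mathcal{C}_3$, along with the shared corner $(\ke,\pb{2})=\mathcal{C}_2\cap\mathcal{C}_3$, as possible locations of $p^\ast$.

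The main tools are the monotonicity properties from \Cref{crecimiento}: as derived at the start of the proof of \Cref{lemmaA}, condition \eqref{cond1} forces $\pphi{1}<\betac$, so $\ll{1}$ is strictly decreasing in each argument, and $\hh{}{1}$ is increasing in $\pp{1}$ and decreasing in $\pp{2}$. Writing $\ll{1}\circ\hh(p)=\ll{1}(\hh(p))$, the heuristic is that maximizing this expression corresponds to making $\hh(p)$ Pareto-small, which pushes the maximizer into the lower-left part of $B_1$, i.e., toward $\mathcal{C}_1\cup\mathcal{C}_4\cup\mathcal{C}_5$.

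I would split the argument into two pieces. On $\mathcal{C}_3$ (top edge, $\pp{2}=\pb{2}$), I parameterize by $\pp{1}\in[0,\ke]$ and aim to show that $\pp{1}\mapsto\ll{1}\circ\hh(\pp{1},\pb{2})$ attains its maximum at $\pp{1}=0\in\mathcal{C}_4$ or at the corner $(\ke,\pb{2})$. Similarly, on $\mathcal{C}_2$ (right edge, $\pp{1}=\ke$), I parameterize by $\pp{2}\in[c,\pb{2}]$ and reduce to an endpoint on $\mathcal{C}_1$ or the same corner. The corner $(\ke,\pb{2})$ is then ruled out by combining both inward perturbation directions (decreasing $\pp{1}$ and decreasing $\pp{2}$ jointly). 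Once this is done, every endpoint of the excluded boundary pieces lies in $\mathcal{C}_1\cup\mathcal{C}_4$, and the claim follows.

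The main obstacle is that a naive chain-rule sign analysis does not immediately yield the required one-dimensional monotonicity: the term involving $\partial\hh{}{1}$ has the desired sign, but the cross-term involving $\partial\hh{}{2}$ can go the wrong way, and without the symmetric hypothesis $\pphi{2}<\betac$ one cannot guarantee a definite sign for $\partial\hh{}{2}$. Overcoming this requires an algebraic domination of the cross-term, exploiting the definition of $\pb{2}$ as $\sup\gg{2}$ (for $\mathcal{C}_3$, where $\gg{2}$ is locally stationary near its argmax), the defining equation for $\ke$ together with the bound $\hh{}{1}(\ke,\pp{2})\leq(1-\varepsilon)\ke$ from \Cref{propc}(3) (for $\mathcal{C}_2$), and the explicit formulas~\eqref{eq:ffMMM}--\eqref{eq:Galpha}. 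A more robust alternative, if the sign-balance becomes unwieldy, is to restrict to each of $\mathcal{C}_2$ and $\mathcal{C}_3$ as a one-variable curve and directly locate the maximum of the resulting scalar function using those explicit formulas.
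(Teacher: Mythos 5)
There is a genuine gap at the very first step: you reduce the problem to excluding the relative interiors of $\mathcal{C}_2$ and $\mathcal{C}_3$ (and their corner), but nothing in your argument shows that the maximizer must lie on the boundary of $B_1$ in the first place. The function $\ll{1}\circ\hh$ is not known to be free of interior local maxima, so even a complete edge-by-edge analysis of $\mathcal{C}_2$ and $\mathcal{C}_3$ would not prove the lemma; you would still need an argument pushing an arbitrary interior point of $B_1$ to the set $\mathcal{C}_1\cup\mathcal{C}_4\cup\mathcal{C}_5$ without decreasing $\ll{1}\circ\hh$. Moreover, the part of the plan you do describe is left unresolved: you acknowledge that the chain-rule sign analysis along $\mathcal{C}_2$ and $\mathcal{C}_3$ fails because of the cross-term in $\partial\hh{}{2}$ (indeed $\pphi{2}$ may exceed $\betac$, so $\hh{}{2}$ has no definite monotonicity), and the proposed ``algebraic domination'' is only a hope, not an argument. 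A smaller point: $B_1$ is defined by the strict inequality $\ll{1}(p)<1$, so it is not compact and does not contain $\mathcal{C}_5$ (nor the endpoints $(a,c)$, $(0,b)$); one should pass to the closure before speaking of an attained maximum.

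The paper's proof sidesteps both difficulties with a single observation that you may want to adopt: foliate $B_1$ by the level sets $\{\ff{2}(p)=\gamma\}$. Since $\ff{2}$ is increasing in $\pp{2}$ and decreasing in $\pp{1}$, each level set is an increasing curve $\pp{2}=r_\gamma(\pp{1})$; along it $\hh{}{2}=\gg{2}(\gamma)$ is constant, while $\ff{1}=\bigl(1-e^{-\bet{1}\pp{1}-\bet{2}r_\gamma(\pp{1})}\bigr)-\gamma$ is increasing in $\pp{1}$, hence (by \Cref{maxg}, using $\pphi{1}<\betac$) so is $\hh{}{1}$. Because $\ll{1}$ is decreasing in both arguments, moving along the level curve toward smaller $\pp{1}$ can only increase $\ll{1}\circ\hh$, and this motion started from \emph{any} point of $B_1$ (interior or not) exits the region only through $\mathcal{C}_1\cup\mathcal{C}_4\cup\mathcal{C}_5$ — it cannot reach $\mathcal{C}_2$ or $\mathcal{C}_3$ since both $\pp{1}$ and $\pp{2}$ decrease along the way. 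This handles interior points and the problematic edges in one stroke and avoids any sign analysis of $\partial\hh{}{2}$.
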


Thus in order to prove the first statement in \eqref{supinf} we need to bound the maximum of $\ll{1}\circ\hh$ on each set $\mathcal{C}_1,\;\mathcal{C}_4$ and $\mathcal{C}_5$ separately.

Consider first $\mathcal{C}_1$, where $p=(\pp{1},c)$ with $\pp{1}\in[a,\ke]$. 
From \Cref{crecimiento} we know that $\ll{1}(\cdot,0)$ is strictly decreasing and, since $\pphi{1}<\betac$, the same proposition states that $\hh{}{1}(\cdot,0)$ is strictly increasing.
Therefore, since $h(\pp{1},0)=(\hh{}{1}(\pp{1},0),0)$ we deduce that the mapping $\pp{1}\longmapsto\ll{1}(\hh{}{1}(\pp{1},0),0)$ is strictly decreasing with its derivative bounded away from zero. Since all the functions involved in the argument are smooth, if $c$ is sufficiently small we also get that $\frac{\partial}{\partial \pp{1}} \ll{1}\circ\hh$ is negative and bounded away from zero on $\mathcal{C}_1$, so $\ll{1}\circ h$ is maximized at the point $(a,c)$, and we need to prove that it is smaller than 1 there. Indeed, using the definition of $a$ we obtain $\hh{}{1}(a,c)=a$, and since $a<\ke$ we can use \Cref{propc} to deduce that $\hh{}{2}(a,c)>c$, so we deduce that $h(a,c)\geq(a,c)$ (with strict inequality in the second component). Using this inequality and the monotonicity of $\ll{1}$ we finally conclude that $\ll{1}\circ\hh(a,c)<\ll{1}(a,c)=1$.

Next consider $\mathcal{C}_4$. Here we have $\pp{1}=0$, which greatly simplifies the analysis since
\[\hh{}{1}(0,\pp{2})=0,\qquad\hh{}{2}(0,\pp{2})=\gg{2}(1-e^{-\bet{2}\pp{2}}),\qquad\ll{1}\circ\hh=\pphi{1}\tfrac{1-e^{-\bet{2} \hh{}{2}}}{\bet{2} \hh{}{2}},\]
where $\hh{}{2}=\hh{}{2}(0,\pp{2})$. Indeed, from the particular form of $\ll{1}\circ\hh$ we have
\[\ll{1}\circ\hh<1\,\Longleftrightarrow\,\tfrac{1-e^{-\bet{2} \hh{}{2}}}{\bet{2} \hh{}{2}}<\tfrac{1-e^{-a_1(\pphi{1})}}{a_1(\pphi{1})}\] where we have used the definition of $a_1(\pphi{1})$ on the right hand side. Now, since the function $\tfrac{1-e^{-x}}{x}$
is decreasing we obtain
\begin{equation}\label{equivalente}\ll{1}\circ\hh(0,\pp{2})<1\quad\Longleftrightarrow\quad a_1(\pphi{1})<\bet{2}\hh{}{2}\,=\,\bet{2}\gg{2}(1-e^{-\bet{2}\pp{2}}). \end{equation}
Observe now that $\ll{1}$ is decreasing, so it is maximized at the points where $\hh{}{2}$ attains its minimum. Since $\gg{2}$ has a single local maximum it follows that $\hh{}{2}(0,\cdot)$ is minimized either where $\pp{2}$ is maximal or minimal. From this we conclude that the maximum of $\ll{1}$ on $\mathcal{C}_4$ is either $\ll{1}\circ\hh(0,\pb{2})$ or $\ll{1}\circ\hh(0,b)$. Now from \eqref{equivalente} we see that for $\ll{1}\circ\hh(0,\pb{2})<1$ to hold it is enough that \[a_1(\pphi{1})\,<\,\bet{2}\gg{2}(1-e^{-\bet{2}\pb{2}})\,\leq\,\bet{2}\gg{2}(1-e^{-\bet{2}\pphi{2}/2}),\]
which follows from our assumption \eqref{cond1}. To deal with $\ll{1}(0,b)$ we observe that $a_1(\pphi{1})=\bet{2}b$, so \eqref{equivalente} shows that $\ll{1}\circ\hh(0,b)<1$ if and only if $a_1(\pphi{1})<\bet{2}\gg{2}(1-e^{-a_1(\pphi{1})})$, which follows directly from \eqref{cond1}.

\smallskip

Finally, for $\mathcal{C}_5$, where $\ll{1}(\pp{1},\pp{2})=1$, it will be enough to show that 
\begin{equation}
\label{eqc5}
\inf\nolimits_{{p}\in\mathcal{C}_5}\big[\pphi{2}\tts\GG{2}^3\circ\ff{2}-\pphi{1}\tts\GG{1}^3\circ\ff{1}\big](p)>0.
\end{equation}
Indeed, if \eqref{eqc5} is satisfied then multiplying the inequality by $\frac{1-e^{-\Sigma_{p}}}{\Sigma_{p}}$, with $\Sigma_p=\bet{1}\pp{1}+\bet{2}\pp{2}$, gives $\ll{2}(p)>\ll{1}(p)=1$, and this implies $\pp{2}<\hh{}{2}(p)$, which in turn implies $\ll{1}(\hh)=\ll{1}(\pp{1},\hh{}{2})<\ll{1}(p)=1$.
To prove \eqref{eqc5} recall that $s(\pp{1})$ is a decreasing function, which means that $\ff{1}(\pp{1},s(\pp{1}))$ is increasing and $\ff{2}(\pp{1},s(\pp{1}))$ is decreasing. It follows that on $\mathcal{C}_5$ the function in \eqref{eqc5} is increasing on $\pp{1}$, so the infimum is positive if the inequality holds at $(0,b)$, which in this case follows from assumption \eqref{cond1}.

\smallskip

To complete the proof we need to show that $\inf_{p:\,\ll{1}(p)\geq1}\ll{2}(p)>1$, but $\ll{2}$ is decreasing in $\pp{2}$ and the maximal values of $\pp{2}$ within the region given by $\ll{1}\leq 1$ are found at $\ll{1}=1$. This way, it is enough to show that $\inf_{\ll{1}(p)=1}\ll{2}(p)>1$, and this is analogous to the proof of \eqref{eqc5}.
\end{proof}

It remains to prove \Cref{bordes}, which follows from similar monotonicity arguments.

\begin{proof}[Proof of \Cref{bordes}]
	Observe that, since $\ff{2}$ is increasing in $\pp{2}$ and decreasing in $\pp{1}$, the level sets $\{\ff{2}(p)=\gamma\}$ define strictly increasing functions $\pp{2}=r_{\gamma}(\pp{1})$. On these level sets $\hh{}{2}$ is clearly constant and $\hh{}{1}$ is increasing in $\pp{1}$; this last statement follows from the monotonicity of $\gg{1}$ (proved in \Cref{maxg}) and from
	$\ff{1}(\pp{1},r_{\gamma}(\pp{1}))+\gamma=(\ff{1}+\ff{2})(\pp{1},r_{\gamma}(\pp{1}))=1-\exp(-\bet{1}\pp{1}-\bet{2}r_{\gamma(\pp{1})})$,
	which implies that $\ff{1}$ increases in $\pp{1}$. Since $\ll{1}$ is decreasing in both arguments, at each level set $\ll{1}(h)$ attains its maximum at points of minimal values of $\pp{1}$. Our claim then is a result of the fact that each point $p\in A$ belongs to a level set $\ff{2}\equiv\gamma$ which attains a minimal value of $\pp{1}$ at $\mathcal{C}_1\cup\mathcal{C}_4\cup\mathcal{C}_5$.
\end{proof}

Observe that the condition $a_1(\pphi{1})<\bet{2}g_{\alp{2}}(1-e^{-a_1(\pphi{1})})$ appearing in \eqref{cond1} is equivalent to $\frac{\pphi{1}}{G_{\alp{2}}^3(1-e^{-a_1(\pphi{1})})}<\pphi{2}$. The left hand side of this inequality defines an increasing function of $\pphi{1}$ and $\alp{2}$, from \Cref{maxg} and the fact that $a_1(\pphi{1})$ is increasing with $\pphi{1}$, so the last inequality is equivalent to $\pphi{1}<\mathcal{F}_{2.1}(\alp{2},\pphi{2})$ for some implicit increasing function $\mathcal{F}_{2.1}$. Similarly, the condition $a_1(\pphi{1})<\bet{2}g_{\alp{2}}(1-e^{-\frac{\pphi{2}}{2}})$ appearing in \eqref{cond1} is equivalent to $\pphi{1}<\mathcal{F}_{2.2}(\alp{2},\pphi{2})$ for some $\mathcal{F}_{2.2}$. 
We then define the function $\mathcal{F}_2$ appearing in the statement of \Cref{teodinamico} as
\begin{equation}\label{eq:defcf2}
\mathcal{F}_{2}(\alp{2},\pphi{2})=\min\{\mathcal{F}_{2.1}(\alp{2},\pphi{2}),\mathcal{F}_{2.2}(\alp{2},\pphi{2})\}.
\end{equation}

	The rest of the proof of \Cref{extcoex}(ii) consists in modifying $B_1$ until obtaining the interior-recurrent set $B$ required in the result.
  As a first step, observe that from \Cref{lemmaA} there is some $\gamma\in(0,1)$ such that $\sup_{p\in B_1}\ll{1}\circ\hh(p)=\gamma$. We will build an interior-recurrent set $B_2$ by modifying slightly the definition of $B_1$. Define
	\[B_2=\big\{p\in[0,\ke]\times[c,\pb{2}],\;\ll{1}(p)<\bar{\gamma}\big\}\]
	for some $\bar{\gamma}\in(\gamma,1)$. We claim that this set is interior-recurrent with parameter $\bar{k}=1$. Indeed, for any $\pp\in B_2$, from our choice of parameters we have:
	\begin{itemize}[leftmargin=15pt]
		\item From \Cref{propc}.\ref{propc3} we have $\hh{}{1}(p)\leq(1-\varepsilon')\ke$.
		\item Since $\pp{1}\leq \ke$, from \Cref{propc}.\ref{propc1} we have $\hh{}{2}(p)\geq(1+\varepsilon')c$.
		\item From \Cref{lemmaA} we have $\sup_{p\in B_2}\ll{1}\circ\hh(p)\leq \sup_{p\in B_1}\ll{1}\circ\hh(p)=\gamma<\bar{\gamma}$.
	\end{itemize}
Hence there is some $\delta>0$ such that $d(\hh(\pp),B_2^c)>\delta$ uniformly on $\pp\in B_2$, which proves the claim. Now that we have shown that $B_2$ is interior-recurrent, we would like to show that there are $\gamma_1$ and $\gamma_2$ such that for any $\pp{}{}\in B_2$,
\[(1-\alp{1})\ff{1}(\pp)\leq \gamma_1\pp{1}\quad\text{and}\quad\gamma_2<\pp{2}.\]
Taking $\gamma_2=c$ the second inequality is trivially satisfied. The main problem is that in $B_2$ the decay we get is of the form $\hh{}{1}(\pp)\leq \bar{\gamma}\pp{1}$, which is not as strong as the one we need. However, once inside $B_2$ we have $\pp{1}{k}\longrightarrow0$, so in particular it is easy to see that for each $\delta$, the set $B_{\delta}\subseteq B_2$ given by
\[B_\delta:=\big\{p\in[0,\delta]\times[c,\pb{2}],\;\ll{1}(p)<\bar{\gamma}\big\}\]
is also interior-recurrent and satisfies the desired property. Indeed, for any $\varepsilon'>0$ we can take $\delta$ sufficiently small, so that for any $\pp{1}<\delta$ we have $\GG{1}^3\circ\ff{1}(p)\geq 1-\varepsilon'$.
Choosing $\varepsilon'$ sufficiently small, we use the inequality above to conclude that $(1-\alp{1})\ff{1}(p)\leq \frac{\bar{\gamma}}{1-\varepsilon'}\pp{1}$, and the result then follows taking $\gamma_1=\frac{\bar{\gamma}}{1-\varepsilon'}$.

It only remains to show that the dynamical system reaches $B_\delta$ in a bounded number of steps.
But, as claimed before, within $B_1$ we have $\sup_{p\in B_1}\ll{1}\circ\hh(p)=\gamma$ and hence the dynamical system reaches $B_{\delta}$ before $\log_{\gamma}(\delta)$ iterations. Thus it suffices to show that $\DS(h)$ reaches $B_1$ before $\bar{k}$ iterations for some fixed $\bar{k}\in\mathbb{N}$.
Fix an initial condition $\pp{}{0}$. If $\pp{1}{0}>\ke$, then by \ref{propc3} in \Cref{propc} we have $\pp{1}{1}\leq (1-\varepsilon')\pp{1}{0}$, and we may repeat the argument until the trajectory reaches $[0,\ke]\times[0,\pb{2}]$, where it remains forever. Since this procedure takes at most $\log_{1-\varepsilon'}(\kappa_\varepsilon)$ iterations, we may assume $\pp{1}{0}\leq\ke$.
Assume now that $l_2<\pp{2}{0}\leq c$ so we can use \ref{propc1} in \Cref{propc} to obtain $\pp{2}{1}>\pp{2}{0}(1+\varepsilon')$, and then repeat the argument to show that the sequence reaches $[0,\ke]\times[c,\pb{2}]$ in at most $\log_{1+\varepsilon'}(c/l_2)$ steps, remaining there forever. Hence we may assume that the initial condition $\pp{}{0}$ lies within this last set, and all we need to do is show that there is some bounded $n$ such that $\ll{1}(\pp{}{n})<1$. To do so observe from \Cref{lemmaA} that there is some fixed $\varepsilon>0$ such that for any $\pp{}{n}$ with $\ll{1}(\pp{}{n})\geq 1$, we necessarily have $\ll{2}(\pp{n})>1+\varepsilon$. It follows that if $\ll{1}(\pp{}{n})\geq 1$ for the first $n_0=\log_{1+\varepsilon}(1/c)$ iterations of the dynamical system, then $\pp{2}{n_0+1}>1$, which is impossible. We conclude that there must be some $n<n_0$ with $\ll{1}(\pp{}{n})< 1$ and hence the dynamical system reaches $B_1$ in a bounded number of iterations.

Finally, and as in the proof of \Cref{extcoex}(iii), we recap the properties of $\mathcal{F}_2$ that were stated in \Cref{teodinamico}, in the following proposition, whose proof we defer to the appendix.
\begin{prop}\label{propf2}
		Let $\mathcal{F}_2$ be as in \eqref{eq:defcf2}. Then $\mathcal{F}_2(\alpha,\pphi)>1$ and for fixed $\alpha$, $\mathcal{F}_2(\alpha,\pphi)= 1+(1+o(1))\frac{\pphi}{2}e^{-\frac{3\pphi}{2}}$ for large $\pphi$. On the other hand, for fixed $\pphi$, $\mathcal{F}_2(\alpha,\pphi)$ is decreasing as a function of $\alpha$.
\end{prop}

\appendix
\section{Technical proofs}
\label{apendice}

\begin{proof}[Proof of \Cref{convarbol}]
	Assume that \eqref{eq:alphaconv} holds and recall that $L_N= \log_2(N)/5$.
  Since $Z_0$ is a Bernoulli random variable with parameter $q$, we clearly have (with the obvious notation)
	\begin{align*}
	\E\big(Z_0(1-\alpha)^{Z_0+\cdots+Z_{L_N}}\big)&=\;q(1-\alpha)\E\big((1-\alpha)^{Z_1+\cdots+Z_{L_N}}\big)\\
	&=\;q(1-\alpha)\E\Big((1-\alpha)^{Z_1}\big(\E_1\big(1-\alpha)^{Z_2+\cdots+Z_{L_N}}\big)^{Z_1}\Big)\\
	&=\;q(1-\alpha)r((1-\alpha)W_{L_N}^2)
	\end{align*}
	where $r(x)=(qx+1-q)^3$ is the probability generating function of a Binomial$[3,q]$ random variable and for $k\geq2$ we let
  \[W_{k}=\E_1\big((1-\alpha)^{Z_2+\cdots+Z_{k}}\big)^{1/2},\]
  with $\E_1$ standing for the law of the Galton-Watson process with $Z_1=1$.
  To obtain an expression for $W_{L_N}$ we study the sequence $(W_k)_{k\geq 2}$ which, using the same reasoning as above, satisfies the quadratic recurrence equation
	\begin{equation}
	\label{eqrec}
	W_{k+1}\;=\;q(1-\alpha)W_{k}^2+1-q
	\end{equation}
	with initial condition $W_2=(1-\alpha)q+1-q$. This recurrence equation has two fixed points, 	
	$\frac{1\pm\sqrt{1-4q(1-q)(1-\alpha)}}{2q(1-\alpha)}$; the one with a plus is repulsive and larger than one while the one with a minus is attractive, so all orbits starting in $[0,1]$ converge to the latter, which we call $\We$.
  We then have
	$r((1-\alpha)\We^2)=\left[q(1-\alpha)\We^2+1-q\right]^3=\We^3$, and observing that $g_{\alpha}(q)=q(1-\alpha)\tts\We^3$, we deduce that \eqref{princaprox} is equivalent to
	\begin{equation*}
	\label{bound2}
	q(1-\alpha)\,\Big|r((1-\alpha)W_{L_N}^2)-r((1-\alpha)\We^2)\Big|\leq Ce^{-\alpha L_N}.
	\end{equation*}
	And since $q(1-\alpha)\leq 1$ and $|r(a)-r(b)|\leq 3|a-b|$ for all $a,b\in[0,1]$, it is enough to show that $|W_{L_N}-\We|\leq Ce^{-\alpha L_N}$. To this end we notice that, from the definition of $\We$,
	\begin{equation}
	\label{eq:2am}
	\begin{aligned}
	\big|W_{k+1}-\We\big|&=\;\Big|\big[q(1-\alpha)W_{k}^2+1-q\big]-\big[q(1-\alpha)\We^2+1-q\big]\Big|\\
	&=\;q(1-\alpha)\big|W_k-\We\big|\big(W_k+\We\big)\leq\;q(1-\alpha)\big|W_k-\We\big|\big(1+\We\big),
	\end{aligned}
	\end{equation}
	but it can be easily shown that $q(1+\We)\leq 1$, so $\big|W_{k+1}-\We\big|\leq  (1-\alpha)\big|W_{k}-\We\big|$ for all $k\geq 2$. In particular we get
	\begin{equation}
	\label{eq:1am}
	|W_{L_N}-\We|\leq 2(1-\alpha)^{L_N-1}\leq Ce^{-\alpha L_N}
	\end{equation}
	where $C>0$ is independent of $q$, and $\alpha$. 
\end{proof}

\begin{remark}\label{rem:lalala}
Assume that $\alp{}{N}$ is a sequence in $[0,1]$ such that $\alp{}{N}\to0$ and
\begin{align}\label{cond:3}
\alp{}{N}\log(N)/\log(\log(N)) \longrightarrow\infty.
\end{align}
We will explain how to improve the bound of \Cref{convarbol} in this case. One consequence of this is that in \Cref{theo:5intro} all the factors $\ualpha_{N}\log(N)$ appearing in the exponents in \eqref{esperanzacoex}--\eqref{esperanzadom2} can be replaced by $\ualpha_{N}\log(N)\vee\log(N)^{1/2}$ . This follows by noting that all other bounds in the proof of \Cref{theo:5intro} are of smaller order.
	
\noindent Fix $N$ large and use \eqref{eq:1am} to bound the distance between the $\sfrac{L_N}{2}$-th term of the sequence and $\We$, leading to
\[|W_{L_N/2}-\We|\leq 2e^{-\frac{\alp{}{N} (L_N-2)}{2}}\leq Ce^{-2\log(\log(N))}=C(\log(N))^{-2}\leq C(\alpha_N)^2\]
for some $C$ independent of $q$, where in the second inequality we used \eqref{cond:3} and in the third one we used $\alpha_N\log(N)\rightarrow \infty$.
Noticing that $W_k$ converges monotonically to $\We$, the above bound is valid for all $W_k$ with $k\geq\sfrac{L_N}{2}$, so we can restart the sequence at the $\sfrac{L_N}{2}$-th term to improve the bound in \eqref{eq:2am} to
\begin{equation*}
\big|W_{k+1}-\We\big|=\;q(1-\alp{}{N})\big|W_k-\We\big|\big(W_k+\We\big)\leq\;q(1-\alp{}{N})\big|W_k-\We\big|\big(C(\alpha_N)^2+2\We\big).
\end{equation*}
But $2q(1-\alp{}{N})\We=1-\sqrt{1-4q(1-q)(1-\alp{}{N})}\leq 1-\sqrt{\alp{}{N}}$ so we have $\big|W_{k+1}-\We\big|\;\leq\;\big|W_k-\We\big|\big[1-\sqrt{\alp{}{N}}+C(\alpha_N)^2\big]$ for all $k\geq\sfrac{L_N}{2}$. In particular, since $\alp{}{N}\to0$,
\[|W_{L_N}-\We|\;\leq\;2\big[1-\sqrt{\alp{}{N}}+C(\alpha_N)^2\big]^{L_N/2}\leq Ce^{-\frac{\sqrt{\alp{}{N}}\log N}{2}}\;\leq\;Ce^{-\sqrt{\log N}},\]
where we used that $\alp{}{N}\log N\to\infty$ as $N\to\infty$.
\end{remark}

We turn now to the remaining proofs from Section \ref{preliminaries}.

\begin{proof}[Proof of \Cref{maxg}]
	We prove only the case $\alp>0$; the case $\alp=0$ is similar but much easier to handle. Observe first that $\GG{}(x)$ satisfies
	\begin{equation}
	\label{eq1ap}
	\GG{}(x)\sqrt{1-4(1-\alp)x(1-x)}=-\GG{}(x)+2-2x,
	\end{equation}
	\begin{equation}
	\label{eq2ap}
	\GG{}'(x)\;=\;\tfrac{\GG{}(x)-1}{x\sqrt{1-4(1-\alp)x(1-x)}}\;=\;\tfrac{\GG{}(x)-1}{x[1-2(1-\alp)x\GG{}(x)]}.
	\end{equation}
	To find the maximum of $\gg{}$ we solve the first order condition
	$0=\gg{}'(x)= x\tts\GG{}^3(x)\tsm\left[\frac{1}{x}+\frac{3\GG{}'(x)}{\GG{}(x)}\right]$.
	The factor $x\GG{}^3(x)$ equals $0$ only at $0$ and $1$, so $\gg{}'(x)=0$ inside $(0,1)$ only if the factor in brackets vanishes which, from the above identities, means that $\GG{}(x)=x+1/2$. We conclude, since $\GG{}\leq 1$, that every critical point of $g_\alpha$ must lie in $[0,1/2]$.	
	The first part of the proposition will follow if we show that at every such critical point $x_0$ we have $\gg{}''(x_0)<0$ (so every critical point is a maximum, and hence there can only be one). Now $\gg{}''(x_0)= \gg{}(x_0)\left[\frac{3\GG{}''(x_0)}{\GG{}(x_0)}-\frac{4}{3x_0^2}\right]$, so it suffices to prove that $\GG{}''(x_0)\leq0$. Using \eqref{eq1ap} and \eqref{eq2ap} we find	$\GG{}''(x)= \frac{[\GG{}(x)-1]2(1-\alp)x[2\GG{}(x)+x\GG{}'(x)]}{[x(1-2(1-\alp)x\GG{})]^2}$, which is non-positive as soon as $2\GG{}(x_0)+x\GG{}'(x_0)\geq0$ since $\GG{}\leq 1$. By \eqref{eq1ap} and \eqref{eq2ap} again, this is equivalent to $3-4x>\GG{}(x_0)$, which is satisfied because thanks to the condition $x_0\in[0,1/2]$.
	
	To prove the second part of the proposition write $\Sigma_p=\bet{1}\pp{1}+\bet{2}\pp{2}$ so that
	\[\ff{i}(p)\;=\;\tfrac{1-e^{-\Sigma_p}}{\Sigma_p}\bet{i}\pp{i}.\]
	Since $x\mapsto \frac{1-e^{-x}}{x}$ is decreasing, it follows that $\ff{i}(p)\leq1-e^{-\bet{i} \pp{i}}\leq1-e^{-\bet{i}\gg{i}\!(x_0)}$
	so it will be enough to prove that $1-e^{-\bet{i} \gg{i}\!(x_0)}< x_0$. Since $x_0$ is characterized by $\GG{i}(x_0)=x_0+1/2$, it is enough to show that	$V(x_0)\colonequals \pphi{i}x_0 \big(\tfrac{1}{2}+x_0\big)^3+\log(1-x_0)< 0$.
	But, in fact, $V$ is non-positive on the entire interval $(0,1/2]$. Indeed, $V(0)=0$ and $V(1/2)=\frac{\pphi{i}}{2}-\log 2$, which is negative from our assumption $\pphi{i}<\betac$, so it is enough to prove that the inequality holds at the critical points of $V$; this follows from	$V'(x)=\pphi{i}(\tfrac{1}{2}+x)^2(\tfrac{1}{2}+4x)-\tfrac{1}{1-x}$,	$V''(x)=\pphi{i}(\tfrac{1}{2}+x)(3+12x)-\tfrac{1}{(1-x)^2}$, so whenever $V'(x_1)=0$ we have $(1-x_1)V''(x_1)=\pphi{i}(x_1+1/2)[-16x_1^2+13 x_1/2+11/4]$, which is positive in $[0,1/2]$, giving that $x_1$ is a minimum.
\end{proof}

\begin{proof}[Proof of \Cref{crecimiento}]
	We keep the notation $\Sigma_p$ used in the previous proof.
  For the dependence of $\ff{1}$ on $\pp{1}$ we write the function as $(1-e^{-\Sigma_p})\tfrac{\bet{1}\pp{1}}{\Sigma_p}$ which, for fixed $\pp{2}$, is the product of two increasing functions. For the dependence of $\ff{1}$ on $\pp{2}$, on the other hand, we write $\ff{1}$ as $\frac{1-e^{-\Sigma_p}}{\Sigma_p}\bet{1}\pp{1}$; the factor on the left is decreasing in $\pp{2}$ while the one on the right is constant. This gives (1). Next observe that $\ll{1}(p)=\pphi{1}\frac{1-e^{-\Sigma_p}}{\Sigma_p}\GG{1}^3\circ\ff{1}(p)$ and the same analysis shows that $\ff{1}$ is increasing and $\GG{}$ is decreasing, giving (2).
		
  If $\pphi{1}<\betac$, then from \Cref{maxg} we know that $\gg{i}'\circ\ff{i}(\pp)\geq0$, so $\hh{}{1}$ satisfies the same monotonicity as $\ff{1}$ on each argument. Since $\ll{1}(p)=\tfrac{\hh{}{1}(p)}{\pp{1}}$, it must behave as $\hh{}{1}$ with respect to $\pp{2}$. This gives (3).
\end{proof}

\begin{proof}[Proof of \Cref{propc}] 
We keep again the definition of $\Sigma_p$ used in the proof of \Cref{maxg}. Let us show first that the equation
 	\[\gg{1}(1-e^{-\bet{1} \ke})\,=\,(1-\varepsilon)\ke\]
has indeed a unique positive solution. To see this define $y=1-e^{-\bet{1} \ke}$ and observe that $\ke$ is a positive solution of the above equation if and only if $y$ is a solution of
\[\pphi{1}\GG{1}^3(y)\,=\,\frac{-(1-\varepsilon)\log(1-y)}{y}.\]
However, $\GG{1}^3(y)$ is a decreasing function with $\GG{1}^3(0)=1$, while $-\frac{\log(1-y)}{y}$ is increasing and tends to $1$ as $y\to0$. Since $\pphi{1}>1>1-\varepsilon$, this implies that there is exactly one positive solution $y>0$. Furthermore, taking $\ke=1-\alp{1}$ we obtain
\[(1-e^{-\pphi{1}})\GG{1}^3(1-e^{-\pphi{1}})<1-\varepsilon\]
provided $\varepsilon$ is sufficiently small, since both terms on the left are smaller than $1$ for $\pphi{1}>1$. We thus deduce that $\ke<1$.
	To prove \eqref{propc11} we take $c$ small (to be fixed later) and suppose that $\pp{2}<c$. Observing that $\ff{2}(p)=\frac{1-e^{-\Sigma_p}}{\Sigma_p}\bet{2}\pp{2}$ we deduce that if $c$ small enough, $\frac{1-e^{-\bet{1} \pp{1}}}{(1-\varepsilon)\bet{1} \pp{1}}\bet{2}\pp{2}<\ff{2}(p)<\bet{2}\pp{2}$ for $\varepsilon$ small, so from the monotonicity of $\GG{}$, we obtain
	\begin{equation}\label{desfinal}\ll{2}(p)=(1-\alp{2})\tfrac{\ff{2}(p)}{\pp{2}} \GG{2}^3\circ \ff{2}(p)\geq \pphi{2} \tfrac{1-e^{-\bet{1} \pp{1}}}{(1-\varepsilon)\bet{1} \pp{1}}\GG{2}^3(\bet{2}c).\end{equation}
	Since the fraction is decreasing in $\pp{1}$ we obtain a lower bound by taking $\pp{1}=\ke$ and using its definition to obtain
	$\ll{2}\geq \frac{\pphi{2} }{\pphi{1}}\frac{\GG{2}^3(\bet{2}c)}{\GG{1}^3(1-e^{-\bet{1}\ke})}$.
  Recalling that $\tfrac{\pphi{2}}{\pphi{1}}>1$ we have $\GG{2}\leq 1$ and as $c\to0$ we have $\GG{2}(\bet{2}c)\to1$, so taking first $\varepsilon$ small and then $c$ sufficiently small, the right hand side is larger than $1+\varepsilon'$ for some $\varepsilon'$.
	
	For \eqref{propc12}, \Cref{maxg} gives that $\gg{2}$ has a single critical point which is a maximum, so $\hh{}{2}=\gg{2}\circ\ff{2}$ is minimized either when $\ff{2}$ is minimized or maximized. Remembering that $\ff{2}$ decreases with $\pp{1}$ and increases with $\pp{2}$, we conclude that the minimum of $\hh{}{2}$ over the set $[0,\ke]\times[c,\pb{2}]$ is obtained either at $(0,\tfrac{1-\alp{2}}{2})$ or at $(\ke,c)$. We already saw that at $p=(\ke,c)$ we have $\hh{}{2}(p)=\ll{2}(p)\pp{2}>(1+\varepsilon')c$, meaning that we need only to control $\hh{}{2}$ at $(0,\tfrac{1-\alp{2}}{2})$, where it equals $\gg{2}(1-e^{-\pphi{2}/2})$, so the result follows by taking $c$ small enough so that $\gg{2}(1-e^{-\pphi{2}/2})>(1+\varepsilon')c$.
	
	To get \ref{propc2} in the proposition we need to extend the above properties to general values of $\pp{1}$. 
	We proceed as before, but when computing \eqref{desfinal} we use the additional information $\pphi{2}>\betac$ to improve the lower bound without imposing any restriction on $\pp{1}$. Indeed, since $\pphi{1}<\pphi{2}$ we deduce
	that $\bet{1}\pp{1}\leq\tfrac{\pphi{2}}{2}$ so, from monotonicity of $\tfrac{1-e^{-x}}{x}$,
	\[\textstyle\ll{2}(p)\geq \pphi{2} \frac{1-e^{-\bet{1} \pp{1}}}{(1-\varepsilon)\bet{1} \pp{1}}\GG{2}^3(\bet{2}c)\geq 2(1-e^{-\phi_2/2})\GG{2}^3(\bet{2}c),\]
	but $2(1-e^{-\phi_2/2})>1$  from the assumption on $\pphi{2}$, so taking $c$ sufficiently small we conclude again that $\ll{2}(p)>1+\varepsilon'$ for some $\varepsilon'$ small. The proof of the second property is exactly the same as in \eqref{propc12}.
	
	We turn finally to \eqref{propc31} and \eqref{propc32}. Notice that, since $\pphi{1}<\betac$, from \Cref{crecimiento} we know that $\hh{}{1}$ is increasing in $\pp{1}$ and decreasing in $\pp{2}$, so using the definition of $\ke$ we deduce
	\[\pp{1}<\ke\;\Longrightarrow\;\hh{}{1}(p)\leq\hh{}{1}(\ke,0)=\gg{1}(1-e^{-\bet{1}\ke})=(1-\varepsilon)\ke,\]
	which proves \eqref{propc31}. To prove \eqref{propc32} we use a similar argument with $\ll{1}$, which we know is decreasing in both arguments, so that
	\[\ke<\pp{1}\;\Longrightarrow\;\ll{1}(p)\leq\ll{1}(\ke,0)=\tfrac{\gg{1}(1-e^{-\bet{1}\ke})}{\ke}=(1-\varepsilon),\]
	and the result follows.	
\end{proof}

\begin{proof}[Proof of \Cref{propf1}] 
Recall the definition \eqref{eq:defcf1} of $\mathcal{F}_1(\alp{2},\pphi{2})$:
\begin{equation*}
\mathcal{F}_1(\alp{2},\pphi{2})\,=\,\left\{\begin{array}{cl}\frac{1}{\psi(\bet{2}P_f)}&\text{ if }\pphi{2}\leq z(\alp{2})\\[5pt]\frac{1}{\sqrt{\psi(\bet{2}P_f)\psi(\bet{2}P_m)}}&\text{ if }\pphi{2}> z(\alp{2})\end{array}\right.,
\end{equation*}
where $\psi(x)=\frac{1-e^{-x}}{x}$ and where, taking $x_0$ as the only critical point of $\gg{2}$ (as seen in \Cref{maxg}), the values $z(\alp{2})$ and $P_m$ are defined as 
\[z(\alp{2})=\frac{-\log(1-x_0)}{x_0(\frac{1}{2}+x_0)^3},\qquad\text{and}\qquad P_m=g_{\alp{2}}(x_0)\]
(and hence do not depend on $\pphi{2}$) while $P_f$ is the only positive solution of $g_{\alp{2}}(1-e^{-\bet{2}P_f})=P_f$. To show that $\mathcal{F}_1(\alp{2},\pphi{2})$ is increasing as a function of $\pphi{2}$ define $x_f=1-e^{-\bet{2}P_f}$ which, by the definition of $P_f$, satisfies
\begin{equation}\label{app1}\phi_2=-\frac{\log(1-x_f)}{x_fG_{\alp{2}}^3(x_f)}.\end{equation}
But the function $x\to\frac{-\log(1-x)}{x}$ is strictly increasing, and 
$x\to G_{\alp{2}}(x)$ is strictly decreasing, so $x_f$ increases as a function of $\pphi{2}$. In particular, since
\[\frac{1}{\psi(\bet{2}P_f)}=\frac{-\log(1-x_f)}{x_f},\]
we deduce that up to $z(\alp{2})$ the function $\mathcal{F}_1(\alp{2},\cdot)$ is increasing. At $\pphi{2}=z(\alp{2})$ we have (by definition of $z(\alp{2})$)
\[\frac{-\log(1-x_f)}{x_fG_{\alp{2}}^3(x_f)}=\phi_2=\frac{-\log(1-x_0)}{x_0(\frac{1}{2}+x_0)^3}=\frac{-\log(1-x_0)}{x_0G_{\alp{2}}^3(x_0)}\]
where the last equality follows from $G_{\alp{2}}(x_0)=\frac{1}{2}+x_0$, which was shown in \Cref{maxg}. Since the function $x\to \frac{-\log(1-x)}{xG_{\alp{2}}^3(x)}$ is strictly increasing we deduce that $x_0=x_f$, but then
\[P_m=g_{\alp{2}}(x_0)=g_{\alp{2}}(x_f)=g_{\alp{2}}(1-e^{-\bet{2}P_f})=\hh(0,P_f)=P_f,\]
and hence
\[\frac{1}{\psi(\bet{2}P_f)}=\frac{1}{\sqrt{\psi(\bet{2}P_f)\psi(\bet{2}P_m)}},\]
so $\mathcal{F}_1(\alp{2},\pphi{2})$ is continuous at $z(\alp{2})$. It remains to show that for $\pphi{2}>z(\alp{2})$ the function is also increasing, but we already saw that $\frac{1}{\psi(\bet{2}P_f)}$ satisfies this property, so the function will be increasing as soon as $\frac{1}{\psi(\bet{2}P_m)}$ is increasing as well. Now, $P_m$ is independent of $\pphi{2}$ and $\psi$ is a decreasing function so $\frac{1}{\psi(\bet{2}P_m)}=\frac{1}{\psi(\pphi{2}\frac{P_m}{1-\alp{2}})}$ must be indeed increasing.

For the asymptotic analysis we  deduce from \eqref{app1} that $\lim_{\pphi{2}\to\infty}x_f(\pphi{2})=1$, and since $\lim_{x\to1}-\frac{\log(1-x)}{x}=1$ and $\lim_{x\to 1}\frac{G_{\alp{2}}(x)}{1-x}= 1$, taking sufficiently large $C$ and small $\varepsilon$ we have
\[C^{-1}\pphi{2}^{-1/3-\varepsilon}\leq 1-x_f\leq C\pphi{2}^{-1/3+\varepsilon}.\]
From this analysis we deduce that
\[\frac{1}{\psi(\bet{2}P_f)}=\frac{-\log(1-x_f)}{x_f}=\Theta(\log(\pphi{2}))\]
while for the factor $\frac{1}{\psi(\bet{2}P_m)}$ recall that $\frac{1}{\psi(\bet{2}P_m)}=\frac{\pphi{2}\frac{P_f}{1-\alp{2}}}{1-\exp(-\pphi{2}\frac{P_f}{1-\alp{2}})}=\Theta(\pphi{2})$ since $P_m$ does not depend on $\pphi{2}$. We deduce that
\[\frac{1}{\sqrt{\psi(\bet{2}P_f)\psi(\bet{2}P_m)}}\,=\,\Theta(\sqrt{\pphi{2}\log(\pphi{2})})\]
as claimed.
\end{proof}

\begin{proof}[Proof of \Cref{propf2}] 
	Recall that $\mathcal{F}_2(\alp{2},\pphi{2})$ was defined in \eqref{eq:defcf2} as
	\[\mathcal{F}_2(\alp{2},\pphi{2})=\min\{\mathcal{F}_{2.1}(\alp{2},\pphi{2}),\mathcal{F}_{2.2}(\alp{2},\pphi{2})\}\]
	where for $\alp{2}$ fixed:
	\begin{enumerate}\setlength\itemsep{4pt}
		\item $\mathcal{F}_{2.1}(\alp{2},\cdot)$ is the inverse function of $x\to\frac{x}{G_{\alp{2}}^3(1-e^{-a_1(x)})}$,
		\item $\mathcal{F}_{2.2}(\alp{2},\pphi{2})=a_1^{-1}\left(\frac{\pphi{2}}{1-\alp{2}}g_{\alp{2}}(1-e^{-\pphi{2}/2})\right)$
	\end{enumerate}
		and where $a_1(x)$ is defined for $x>1$ as the only positive solution of $a_1(x)=x(1-e^{-a_1(x)})$. We begin the proof by studying the asymptotic behavior of $\mathcal{F}_2(\alp{2},\pphi{2})$. Observe that as $\pphi{2}\to\infty$ we have
		\[\frac{\pphi{2}}{1-\alp{2}}g_{\alp{2}}(1-e^{-\pphi{2}/2})\,=\,(1+o(1))\pphi{2}e^{-3\pphi{2}/2}\]
		where the term $\pphi{2}e^{-3\pphi{2}/2}$ converges to zero as $\pphi{2}\to\infty$. It follows from the definition of $a_1$ that
		\begin{equation}\label{app2}\mathcal{F}_{2.2}(\alp{2},\pphi{2})\,=\,\frac{(1+o(1))\pphi{2}e^{-3\pphi{2}/2}}{1-e^{-(1+o(1))\pphi{2}e^{-3\pphi{2}/2}}}=1+(1+o(1))\frac{\pphi{2}}{2}e^{-3\pphi{2}/2},\end{equation}
		thus showing the asymptotic behavior of $\mathcal{F}_2$. To prove that $\mathcal{F}_2(\alp{2},\pphi{2})>1$ we must show that both $\mathcal{F}_{2.1}(\alp{2},\pphi{2})>1$ and $\mathcal{F}_{2.2}(\alp{2},\pphi{2})>1$. For the inequality involving $\mathcal{F}_{2.1}$ observe that \[\lim_{x\to\infty}\frac{x}{G_{\alp{2}}^3(1-e^{-a_1(x)})}=\infty\]
		and that $\mathcal{F}_{2.1}(\alp{2},\cdot)$ is continuous so the statement $\mathcal{F}_{2.1}(\alp{2},\pphi{2})>1$ fails if and only if we can find some $\pphi{2}>1$ such that $\mathcal{F}_{2.1}(\alp{2},\pphi{2})=1$. This equation implies that such a $\pphi{2}$ must satisfy
		\[\pphi{2}\,=\,\frac{1}{\GG{2}^3(1-e^{-a_1(1)})}\]
		where $a_1(1)$ is defined by continuity as $a_1(1)=\lim_{x\to1}a_1(x)=0$. It follows that the denominator is equal to $\GG{2}^3(0)=1$ and hence $\pphi{2}=1$, contradicting our hypothesis $\pphi{2}>1$ so we conclude that $\mathcal{F}_{2.1}(\alp{2},\pphi{2})>1$. The inequality $\mathcal{F}_{2.2}(\alp{2},\pphi{2})>1$ follows directly from \eqref{app2} since $\mathcal{F}_{2.2}$ is of the form $\frac{y}{1-e^{-y}}$ for some positive $y$. Finally, for $\pphi{2}$ fixed take $\alp{2}<\alp{2}'$ and notice that since
	\[\frac{\mathcal{F}_{2.1}(\alp{2},\pphi{2})}{G_{\alp{2}}^3(1-e^{-a_1(\mathcal{F}_{2.1}(\alp{2},\pphi{2}))})}=\pphi{2}\]
	and that $G_{(\cdot)}(x)$ is decreasing we deduce
	\[\frac{\mathcal{F}_{2.1}(\alp{2},\pphi{2})}{G_{\alp{2}'}^3(1-e^{-a_1(\mathcal{F}_{2.1}(\alp{2},\pphi{2}))})}>\pphi{2}=\frac{\mathcal{F}_{2.1}(\alp{2}',\pphi{2})}{G_{\alp{2}'}^3(1-e^{-a_1(\mathcal{F}_{2.1}(\alp{2}',\pphi{2}))})}\]
	and hence, from monotonicity we conclude $\mathcal{F}_{2.1}(\alp{2}',\pphi{2})<\mathcal{F}_{2.1}(\alp{2},\pphi{2})$. Similarly, observing that
	\[\mathcal{F}_{2.2}(\alp{2},\pphi{2})=a_1^{-1}\left(\pphi{2}(1-e^{-\pphi{2}/2})G^3_{\alp{2}}(1-e^{-\pphi{2}/2})\right)\]
	and that $a_1^{-1}$ is increasing and the argument is decreasing with $\alp{2}$ we conclude that the function $\mathcal{F}_{2.2}(\alp{2},\pphi{2})$ is decreasing on this parameter.
\end{proof}

\smallskip
\noindent\textbf{Acknowledgements:} 
The authors would like to thank the referees and editors for their constructive comments, which helped improve considerably this manuscript. LF also thanks J.F. Marckert for the assistance, discussions and comments that improved this article. 
This project began as part of LF's Master thesis at U. de Chile, and he acknowledges support from LaBRI and a CONICYT Master Scholarship. 
This project was also part of AL's Ph.D. thesis at U. de Chile, and he acknowledges support by the CONICYT-PCHA/Doctorado nacional/2014-21141160 scholarship.
DR was supported by Fondecyt Grants 1160174 and 1201914.
All three authors were also supported by Centro de Modelamiento Matemático (CMM) Basal Funds  ACE210010 and FB210005 from ANID-Chile, and by Programa Iniciativa Cient\'ifica Milenio grant number NC120062 through Nucleus Millenium Stochastic Models of Complex and Disordered Systems.

\printbibliography[heading=apa]

\end{document}